\numberwithin{equation}{section} \theoremstyle{plain}
\newcommand{\Complex}{\mathbb C}
\newcommand{\Real}{\mathbb R}
\newcommand{\ddbar}{\overline\partial}
\newcommand{\pr}{\partial}
\newcommand{\ol}{\overline}
\newcommand{\Td}{\widetilde}
\newcommand{\norm}[1]{\left\Vert#1\right\Vert}
\newcommand{\set}[1]{\left\{#1\right\}}
\newcommand{\To}{\rightarrow}
\newtheorem{theorem}{Theorem}[section]
\newtheorem{lemma}[theorem]{Lemma}
\newtheorem{proposition}[theorem]{Proposition}
\newtheorem{corollary}[theorem]{Corollary}
\newtheorem{definition}[theorem]{Definition}
\newtheorem{ass}[theorem]{Assumption}
\theoremstyle{definition}
\theoremstyle{remark}
\numberwithin{equation}{section}
\newcommand{\abs}[1]{\lvert#1\rvert}
\begin{document}

\title[]
{$G$-invariant Bergman kernel and geometric quantization on complex manifolds with boundary}

\author[]{Chin-Yu Hsiao}
\address{Institute of Mathematics, Academia Sinica and National Center for Theoretical Sciences, 6F, Astronomy-Mathematics Building,
No.1, Sec.4, Roosevelt Road, Taipei 10617, Taiwan}
\email{chsiao@math.sinica.edu.tw or chinyu.hsiao@gmail.com}
\thanks{Chin-Yu Hsiao was partially supported by Taiwan Ministry of Science and Technology projects  108-2115-M-001-012-MY5, 109-2923-M-001-010-MY4 and Academia Sinica Investigator Award.}
\author[]{Rung-Tzung Huang}
\thanks{Rung-Tzung Huang was supported by Taiwan Ministry of Science and Technology project 109-2115-M-008-007-MY2 and 111-2115-M-008 -003 -MY2.}
\address{Department of Mathematics, National Central University, Chung-Li 320, Taiwan}

\email{rthuang@math.ncu.edu.tw}

\author[]{Xiaoshan Li}
\address{School of Mathematics
and Statistics, Wuhan University, Wuhan, 430072, Hubei, China}
\thanks{Xiaoshan Li was supported by National Natural Science Foundation of China (Grant No. 12271411 and 11871380)}
\email{xiaoshanli@whu.edu.cn}

\author[]{Guokuan Shao}
\thanks{Guokuan Shao was supported by National Natural Science Foundation of China (Grant No. 12001549) and the Fundamental Research Funds for the Central Universities, Sun Yat-sen University (Grant No. 22qntd2901)}
\address{School of Mathematics (Zhuhai), Sun Yat-sen University, Zhuhai 519082, Guangdong, China}

\email{shaogk@mail.sysu.edu.cn}

\keywords{invariant Bergman kernel, geometric quantization, moment map, Fredholm operator}
\subjclass[2020]{Primary: 32A25, 53D50, 58J40}

\dedicatory{In memory of Professor Nessim Sibony }

\begin{abstract}
Let $M$ be a complex manifold with boundary $X$, which admits a holomorphic Lie group $G$-action preserving $X$. We establish a full asymptotic expansion for the $G$-invariant Bergman kernel 
under certain assumptions. As an application, we get $G$-invariant version of Fefferman's result about regularity of biholomorphic maps on strongly pseudoconvex domains of $\mathbb C^n$. 
Moreover, we show that the Guillemin-Sternberg map on a complex manifold with boundary is Fredholm by developing reduction to boundary technique, which establish ``quantization commutes with reduction" in this case.
\end{abstract}

\maketitle

\section{Introduction}\label{s-gue190320} 

Let $M$ be a domain in a complex manifold $M'$ with smooth boundary $X$. Let $B$ be the orthogonal projection from $L^2(M)$ 
onto the space of $L^2$ holomorphic functions on $M$ (Bergman projection). The study of boundary behavior of $B$ is a classical 
subject in several complex variables. When $X$ is strongly pseudoconvex, Feffermann~\cite{Fer74} established an asymptotic expansion 
for $B$ at the diagonal. A full asymptotic expansion of $B$ was obtained by Boutete de Monvel and Sj\"ostrand~\cite{BouSj76}. 
The asymptotic of $B$ plays an important role in some important probems in several complex variables. For example, by using 
asymptotic expansion for $B$, Fefferman~\cite{Fer74} established boundary regularity of biholomorphic maps of domains in $\mathbb C^n$. 
When $M$ is a weakly pseudoconvex domain, there are fewer results. In~\cite{HS22}, the first author and Savale obtained a pointwise expansion of $B$ 
at the diagonal in the tangential direction when $M$ is a finite type weakly pseudoconvex domain in $\mathbb C^2$. 
In general, it is very difficult to study the boundary behavior of the Bergman projection on a weakly pseudoconvex domain $M$. Let us see some simple examples and explain our motivation. 

\textbf{(1)} Let 
$M:=\set{(z_1,z_2,z_3)\in\mathbb C^3;\, \abs{z_1}^{4}
+\abs{z_2}^2+\abs{z_3}^2<1}$.
$M$ admits an $S^1$-action: 
\[S^1\times M\to M,\ \ e^{i\theta}\cdot(z_1,z_2,z_3)
=(e^{-i\theta}z_1,e^{i\theta}z_2,e^{i\theta}z_3).\]

\noindent
\textbf{(2)} Let 
\[M:=\Big\{(z_1,z_2,z_3,z_4,z_5,z_6)\in\mathbb C^6 ;\,
(\abs{z_5}^4+\abs{z_6}^2)(\sum^4_{j=1}\abs{z_j}^{2}+z_1z_3
+z_2z_4+\ol z_1\ol z_3+\ol z_2\ol z_4)<1\Big\}.\]
Then, $M$ admits a $G:=S^1\times SU(2)$ action: 
\[\begin{split}
&(e^{i\theta},g)\cdot z=(w_1,w_2,\ldots,w_6),\\
&(w_1,w_2)^t:=g(z_1,z_2)^t,\ \ (w_3,w_4)^t:=\ol g(z_3,z_4)^t,\
\ (w_5,w_6)=(e^{-i\theta}z_5,e^{i\theta}z_6),\\ 
&g\in SU(2),\ \ e^{i\theta}\in S^1,\ \ z\in M,
\end{split}\]
where $z^t$ denotes the transpose of $z$. 

In these examples, all the domains are weakly pseudoconvex but with group action and the boundary reduced spaces of these examples are strongly pseudoconvex CR manifolds (we refer the reader to~\cite[Section 2.5]{HMM} for the details and to Section~\ref{s-gue2204301601} below for the meaning of reduced spaces). In~\cite{HMM}, the first author, Ma and Marinescu showed that the $G$-invariant Szeg\H{o} projection is a complex Fourier integral operator if the reduced space is a strongly pseudoconex CR manifold (the whole CR manifold can be non strongly pseudoconvex). Thus, it is quite natural and interesting to study $G$-invariant Bergman projection on a non strongly pseudoconvex domain with group action. This is the motivation of this work. In this work, we completely study the $G$-invariant Bergman projection on a domain $M$ (can be non strongly pseudoconvex) with group $G$ action under the assumption that the boundary reduced space  with respect to the group $G$ action is non-degenerate. We show that the $G$-invariant Bergman projection on a such domain is a complex Fourier integral operator. As an application, we get $G$-invariant version of Fefferman's result about 
regularity of biholomorphic maps on strongly pseudoconvex domains of $\mathbb C^n$. Since the study of $G$-invariant Bergman 
projection is closely related to geometric quantization, we also study geometric quantization on complex manifolds with boundary. 

We now formulate our main results. We refer to Section~\ref{s-gue2204301052} for some notations and terminology used here. Let $M$ be a relatively compact open subset with smooth boundary $X$ of a complex manifold $M'$ of dimension $n$, $n\geq3$.
Let $\rho\in C^\infty(M',\Real)$ be a defining function of $X$, that is,
\[X=\{x\in M';\, \rho(x)=0\},\ \ M=\{x\in M';\, \rho(x)<0\}\]
and $d\rho(x)\neq0$ at every point $x\in X$.
Then the manifold $X$ is a CR manifold with natural CR structure $T^{1,0}X:=T^{1, 0}M'\cap \mathbb CTX$, where $T^{1,0}M'$ denotes the holomorphic tangent bundle of $M'$.

Suppose that $M'$ admits a compact $d$-dimensional Lie group $G$ action. Let $\mathfrak{g}$ denote the Lie algebra of $G$. 
Let $\mu : M' \to \mathfrak{g}^*$, $\mu_X:=\mu|_X: X\to\mathfrak{g}^*$ be the associated moment maps (cf. Definition~\ref{d-gue170124}). We will work in the following setting.

\begin{ass}\label{a-gue2204300124}
        The $G$-action is holomorphic, preserves the boundary $X$,  
	$0$ is a regular value of $\mu_X$, $G$ acts freely on $\mu^{-1}(0)\cap X$, $\mu^{-1}(0)\cap X\neq\emptyset$, the 
 Levi form is positive or negative near $\mu^{-1}(0)\cap X$.
\end{ass}


Note that the $G$-action is holomorphic means that the $G$-action preserves $J$, where $J$ is the complex structure map on $T^{1,0}M'$. The $G$-action preserves the boundary $X$ means that we can find a defining function $\rho\in C^\infty(M',\Real)$ of $X$ such that $\rho(g \circ x)=\rho(x)$, for every $x\in M'$ and every $g \in G$. 

We take a $G$-invariant Hermitian metric $\langle \cdot \,|\, \cdot \, \rangle$ on $\mathbb{C}TM'$. The $G$-invariant Hermitian metric 
 $\langle \cdot \,|\, \cdot \, \rangle$ on $\mathbb{C}TM'$ induces a 
 $G$-invariant Hermitian metric 
 $\langle \cdot \,|\, \cdot \, \rangle$ on $\oplus_{1\leq p+q\leq n, p, q\in\mathbb N_0}T^{*p,q}M'$, where $T^{*p,q}M'$ 
 denotes the bundle of $(p,q)$ forms on $M'$. 
 From now on, we fix a defining function $\rho\in C^\infty(M',\Real)$ of $X$ such that
\begin{equation}\label{e-gue171006aIm}
\begin{split}
&\mbox{$\langle\,d\rho(x)\,|\,d\rho(x)\,\rangle=1$ on $X$},\\
&\rho(g\circ x)=\rho(x),\ \ \forall x\in M',\ \ \forall g \in G.
\end{split}\end{equation}
Let $(\,\cdot\,|\,\cdot\,)_{M}$ be the $L^2$ inner product on $\Omega^{0,q}_c(M)$ given by
\begin{equation}\label{e-gue171006aIbm}
(\,u\,|\,v\,)_{M}:=\int_M\langle\,u\,|\,v\,\rangle dv_{M'},\ \ u, v\in\Omega^{0,q}_c(M),
\end{equation}
where $dv_{M'}$ is the volume form on $M'$ induced by $\langle\,\cdot\,|\,\cdot\,\rangle$. Let $L^2_{(0,q)}(M)$ be the $L^2$ completion of $\Omega^{0,q}_c(M)$ with respect to
$(\,\cdot\,|\,\cdot\,)_M$. We write $L^2(M):=L^2_{(0,0)}(M)$. 

Let $\ddbar : C^\infty(\overline{M}) \to \Omega^{0, 1}(\overline{M}\,)$ be the Cauchy-Riemann operator on $\ol M$. 
We extend $\ddbar$ to $L^2(M)$:
\[
\ddbar:{\rm Dom\,}\ddbar \subset L^2(M)\rightarrow L^{2}_{(0,1)}(M),
\]
where $u\in{\rm Dom\,}\ddbar$ if we can find $u_j\in C^\infty(\ol M)$, $j=1,2,\ldots$, 
such that $u_j\To u$ in $L^2(M)$ as $j\To+\infty$ and there is a $v\in L^2_{(0,1)}(M)$ such that $\ddbar u_j\To v$ as $j\To+\infty$. We set $\ddbar u:=v$. 
Let 
\begin{equation}\label{e-gue2204301350}
H^0(\overline{M}) : = {\rm Ker\,}\ddbar\subset L^2(M).
\end{equation}
Then $H^0(\overline{M})$ is a (possible infinite dimensional) $G$-representation, its $G$-invariant part is the $G$-invariant $L^2$ holomorphic functions on $\overline{M}$. Let
\begin{equation}\label{e-gue2204301537}
H^0(\overline{M})^G := \left\{ u \in H^0(\overline{M});\,  h^*u =u,~\text{for any}~h \in G \right\}.
\end{equation} 
Let 
\begin{equation}\label{e-gue2204301833}
B_G: L^2(M)\To H^0(\overline{M})^G
\end{equation}
be the orthogonal projection with respect to $(\,\cdot\,|\,\cdot\,)_M$ ($G$-invariant Bergman projection). 
The {\it $G$-invariant Bergman kernel} $B_G(x,y) \in \mathscr{D}'(M\times M)$ is the distribution kernel of $B_G$. 

We introduce some notations. For $x\in X$, let $\mathcal{L}_x$ denote the Levi form of $X$ at $x$ (see \eqref{e-gue230327ycdm}) and let $\det\mathcal{L}_x:=\lambda_1(x)\cdots\lambda_{n-1}(x)$, where $\lambda_j(x)$, $j=1,\ldots,n-1$, are the eigenvalues of $\mathcal{L}_x$ with respect to $\langle\,\cdot\,|\,\cdot\,\rangle$.
For any $\xi \in \mathfrak{g}$, we write $\xi_{M'}$ to denote the vector field on $M'$ induced by $\xi$. That is, 
\begin{equation}\label{e-gue230327ycd}
\mbox{$(\xi_{M'} u)(x)=\frac{\partial}{\partial t}\left(u(\exp(t\xi)\circ x)\right)|_{t=0}$, for any $u\in C^\infty(M')$}. 
\end{equation} 
For $x \in M'$, set
\begin{equation}\label{e-gue2204290306}
\underline{\mathfrak{g}}_x={\rm Span\,}\left\{ \xi_{M'}(x);\, \xi\in\mathfrak{g}\, \right\}.
\end{equation} 
Fix $x\in\mu^{-1}(0) \cap X$, consider the linear map 
\[
\renewcommand{\arraystretch}{1.2}
\begin{array}{rll}
R_x:\underline{\mathfrak{g}}_x&\To&\underline{\mathfrak{g}}_x,\\
u&\To& R_xu,\ \ \langle\,R_xu\,|\,v\,\rangle=\langle\,d\omega_0(x)\,,\,Ju\wedge v\,\rangle,
\end{array}
\]
where $\omega_0(x)=J^t(d\rho)(x)$, $J^t$ is the complex structure map on $T^*M'$. 
Let $\det R_x=\mu_1(x)\cdots\mu_d(x)$, where $\mu_j(x)$, $j=1,2,\ldots,d$, are the eigenvalues of $R_x$. 
Fix $x\in\mu^{-1}(0)\cap X$, put $Y_x=\set{g\circ x;\, g\in G}$. $Y_x$ is a $d$-dimensional submanifold of $X$. The $G$-invariant Hermitian metric $\langle\,\cdot\,|\,\cdot\,\rangle$ induces a volume form $dv_{Y_x}$ on $Y_x$. Put 
\[
V_{{\rm eff\,}}(x):=\int_{Y_x}dv_{Y_x}.
\]

The first main result of this work is the following 

\begin{theorem}\label{t-510}
With the notations and assumptions above and recall that we work with Assumption~\ref{a-gue2204300124}. Let $\tau\in C^\infty(\ol M)$ with ${\rm supp\,}\tau\cap\mu^{-1}(0)\cap X=\emptyset$. 
Then, $\tau B_G\equiv0\mod C^\infty(\ol M\times\ol M)$, $B_G\tau\equiv0\mod C^\infty(\ol M\times\ol M)$. 

Let $p\in\mu^{-1}(0)\cap X$. Let $U$ be an open local coordinate patch of $p$ in $M'$, $D:=U\cap X$. 
If Levi form is negative on $D$, then 
\begin{equation}\label{e-gue230411yyd}
B_G(z,w)\equiv0\mod C^\infty((U\times U)\cap(\ol M\times\ol M)).
\end{equation}
Suppose that the Levi form is positive on $D$. Then, 
\begin{equation}\label{e-gue230319ycdem}
B_G(z,w)\equiv\int^{+\infty}_0e^{it\Psi(z,w)}b(z,w,t)dt\mod C^\infty((U\times U)\cap(\ol M\times\ol M)),
\end{equation}
where 
\begin{equation}\label{e-gue230319ycdfm}
\begin{split}
&b(z,w,t)\in S^{n-\frac{d}{2}}_{1,0}(((U\times U)\cap(\ol M\times\ol M))\times\mathbb R_+),\\
&b(z,w,t)\sim\sum^{+\infty}_{j=0}t^{n-\frac{d}{2}-j}b_j(z,w)\ \ \mbox{in $S^{n-\frac{d}{2}}_{1,0}(((U\times U)\cap(\ol M\times\ol M))\times\mathbb R_+)$},\\
&b_j(z,w)\in C^\infty((U\times U)\cap(\ol M\times\ol M)),\ \ j=0,1,2,\ldots,
\end{split}
\end{equation}
\begin{equation}\label{e-gue230327ycda}
b_0(z,z)=2^{d}\frac{1}{V_{{\rm eff\,}}(x)}\pi^{-n+\frac{d}{2}}\abs{\det R_x}^{-\frac{1}{2}}\abs{\det\mathcal{L}_{x}},\ \ \forall x\in\mu^{-1}(0)\cap D,
\end{equation}
and 
\begin{equation}\label{e-gue230319ycdgm}
\begin{split}
&\Psi(z,w)\in C^\infty(((U\times U)\cap(\ol M\times\ol M))),\ \ {\rm Im\,}\Psi\geq0,\\
&\Psi(z,z)=0, \ z\in\mu^{-1}(0)\cap D,\\
&\mbox{${\rm Im\,}\Psi(z,w)>0$ if $(z,w)\notin{\rm diag\,}((\mu^{-1}(0)\cap D)\times(\mu^{-1}(0)\cap D))$},\\
&d_x\Psi(x,x)=-\omega_0(x)-id\rho(x),\ \ 
d_y\Psi(x,x)=\omega_0(x)-id\rho(x),\ \ x\in\mu^{-1}(0)\cap D,\\
&\mbox{$\Psi|_{D\times D}=\Phi$, $\Phi$ is the phase as in~\cite[Theorem 1.5]{HH}}.
\end{split}
\end{equation}  

Moreover, let $z=(x_1,\ldots,x_{2n-1},\rho)$ be local coordinates of $M'$ defined near $p$ in $M'$ with $x(p)=0$ and $x=(x_1,\ldots,x_{2n-1})$ are local coordinates of $X$ defined near $p$ in $X$. Then, 
\begin{equation}\label{e-gue230319ycdaIm}
\mbox{$\Psi(z,w)=\Phi(x,y)-i\rho(z)(1+f(z))-i\rho(w)(1+\ol f(w))+O(\abs{(z,w)}^3)$ near $(p,p)$},
\end{equation}
where $f\in C^\infty$, $f=O(\abs{z})$. 
\end{theorem}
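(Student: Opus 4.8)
The overall plan is to transfer the problem from the interior of $M$ to its boundary $X$, where the $G$-invariant Szegő projection $S_G$ is already well understood by \cite{HMM} and \cite{HH} even on weakly pseudoconvex CR manifolds, and then to propagate that boundary description back into $M$ along the normal direction measured by $\rho$. Concretely, I would first set up a reduction-to-boundary intertwining between $B_G$ and $S_G$ (via a Poisson-type holomorphic extension operator and its adjoint), then import the fact that $S_G$ is a complex Fourier integral operator with phase $\Phi$ concentrated on $\mu^{-1}(0)\cap X$, and finally build the interior phase $\Psi$ and symbol $b$ by extending $\Phi$ off the boundary.

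For the smoothing and vanishing assertions I would argue as follows. Because an $L^2$ holomorphic function on $M$ is real-analytic in the interior and the Bergman kernel $B(z,w)$ is smooth whenever $z$ or $w$ lies in $M$, all singularities of $B_G$ are carried by the boundary diagonal; hence $\tau B_G\equiv0$ and $B_G\tau\equiv0$ for $\tau$ supported in the interior is immediate, and only boundary points matter. On the boundary the reduction to $S_G$ takes over: since by Assumption~\ref{a-gue2204300124} the reduced CR geometry is strongly pseudoconvex near $\mu^{-1}(0)\cap X$, the results of \cite{HMM} and \cite{HH} give that $S_G$ is smoothing away from $\mu^{-1}(0)\cap X$, which yields the first assertion, and that on the negative Levi part the relevant half of the characteristic variety is empty, so $S_G$ — and therefore $B_G$ — is smoothing there, giving \eqref{e-gue230411yyd}. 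The localization to $\mu^{-1}(0)\cap X$ is the moment-map localization principle: writing $B_G$ as the $G$-average of $B$ and performing a non-stationary phase analysis in the joint group and Fourier-integral variables, the only critical points occur where the moment map vanishes.

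On the positive Levi part I would construct the interior parametrix directly. Starting from the boundary phase $\Phi$ of \cite[Theorem 1.5]{HH}, I build $\Psi$ near $(p,p)$ as an almost-analytic extension solving the eikonal equation forced by requiring $\ddbar_z\bigl(e^{it\Psi}b\bigr)=O(t^{-\infty})$ (and the conjugate condition in $w$); this requirement pins down $\Psi|_{X\times X}=\Phi$ together with the normal corrections, giving the expansion \eqref{e-gue230319ycdaIm}, the derivatives $d_x\Psi(x,x)=-\omega_0(x)-id\rho(x)$ and $d_y\Psi(x,x)=\omega_0(x)-id\rho(x)$, and the positivity ${\rm Im\,}\Psi>0$ off the diagonal (its imaginary part growing like the distance to $X$ through the $-i\rho$ terms). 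The symbol $b\sim\sum_{j} t^{n-\frac d2-j}b_j$ is then determined recursively from transport equations; the order $n-\frac d2$ appears because the Bergman kernel sits one order above the Szegő kernel while the $d$-dimensional orbit integration removes $\frac d2$ by stationary phase. The leading coefficient \eqref{e-gue230327ycda} finally comes out of a stationary-phase computation along the orbit $Y_x$: the Gaussian integral in the $d$ orbit directions (whose Hessian is $R_x$) produces $\abs{\det R_x}^{-\frac12}$ and the powers of $2$ and $\pi$, the normalization over the orbit contributes $V_{{\rm eff\,}}(x)^{-1}$, and $\abs{\det\mathcal{L}_x}$ together with $\pi^{-n+\frac d2}$ comes from the standard Bergman Gaussian in the transverse directions.

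The main obstacle is precisely that $M$ need not be strongly pseudoconvex: the full Bergman projection and the $\ddbar$-Neumann operator are then not Fourier integral operators, so one cannot work with $B$ directly. The decisive point — and the hardest to make rigorous — is that averaging over $G$ localizes the whole analysis to $\mu^{-1}(0)\cap X$, where Assumption~\ref{a-gue2204300124} guarantees definiteness of the reduced Levi form, and that this localized definiteness suffices to run the parametrix construction for $B_G$ even in the absence of any such construction for $B$. Carrying out the reduction-to-boundary intertwining rigorously, controlling the almost-analytic extension of $\Phi$ into $M$, and verifying that the transported symbol solves the transport equations with the stated leading term are the remaining technical hurdles.
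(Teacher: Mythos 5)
Your overall architecture---reduce to the boundary by a Poisson-type operator, import the $G$-invariant Szeg\H{o} asymptotics of \cite{HH,HMM}, and extend the boundary phase $\Phi$ into $\ol M$ to obtain $\Psi$, with the leading coefficient coming from stationary phase along the orbits---is indeed the paper's architecture (Theorems~\ref{t-027}, \ref{t-gue230319yydm}, \ref{t-gue230319yydn}, \ref{t-gue230319yydp}). However, two steps you treat as routine are exactly where the real work lies, and as stated they fail. First, your claim that the smoothing assertion is ``immediate'' for interior-supported cutoffs because $L^2$ holomorphic functions are interior-analytic is not correct: for $\tau B_G$ to lie in $C^\infty(\ol M\times\ol M)$ the kernel $\tau(z)B_G(z,w)$ must be smooth in $w$ \emph{up to the boundary}, which amounts to global regularity of $B_G$, i.e.\ $B_G:C^\infty(\ol M)\to C^\infty(\ol M)^G$ continuously. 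For the full Bergman projection on a weakly pseudoconvex domain this can fail (failure of Condition~R), and for $B_G$ it is a theorem, not a given. The paper earns it in Section~\ref{s-gue171006} via a subelliptic estimate for the $G$-invariant $\ddbar$-Neumann Laplacian $\Box^{(q)}_G$ (Lemmas~\ref{t-021} and \ref{l-02}): away from $\mu^{-1}(0)$ the missing $T$-derivative of a $G$-invariant form is recovered from a group vector field $\xi_M$ with $\langle\omega_0,\xi_M\rangle\neq0$, and near $\mu^{-1}(0)\cap X$ from the definiteness of the Levi form. This yields the closed range of $\Box^{(0)}_G$ (Corollary~\ref{c-02}) and the regularity statement Theorem~\ref{t-gue230318yyd}, both of which your composition argument for $\tau B_G\equiv0$, $B_G\tau\equiv0$ uses implicitly but never supplies.

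Second, your localization mechanism---``writing $B_G$ as the $G$-average of $B$ and performing a non-stationary phase analysis''---cannot be run, as you yourself concede: on a weakly pseudoconvex domain $B$ is not a Fourier integral operator, so there is no phase in which to be non-stationary. The genuinely missing idea, which is the paper's decisive trick, is the identity $B_G=B^{(0)}_{\leq\lambda_0}\circ Q_G$ of \eqref{e-gue230319yydh} (valid precisely because $\Box^{(0)}_G$ has closed range, again from Section~\ref{s-gue171006}), where $B^{(0)}_{\leq\lambda_0}$ is the spectral projector onto low-energy forms: by \cite{HM19} this operator \emph{is} a complex Fourier integral operator $P\hat SL$ near any boundary point where the Levi form is definite, with no global pseudoconvexity hypothesis. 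One then averages this local description, $\hat S_G(x,y)=\int_G\hat S(x,gy)\,d\mu(g)$ as in \eqref{e-gue230319yydh}--\eqref{e-gue230319yyydt}, and applies the machinery of \cite{HH} to $\hat S_G$; your direct eikonal/transport construction of $\Psi$ and $b$ is replaced in the paper by the Poisson-operator composition theorems of \cite[Part II, Propositions 7.8--7.10]{Hsiao08} applied to $P\hat S_GL$, which also deliver the relation $b_0(x,x)=2a_0(x,x)$ and hence \eqref{e-gue230327ycda}. Your symbol-order bookkeeping ($n-\tfrac d2$ from Szeg\H{o} order $n-1-\tfrac d2$ plus one for the Poisson step) and the appearance of $\abs{\det R_x}^{-1/2}$ and $V_{\rm eff}(x)^{-1}$ are consistent with this; what is absent is the analytic bridge ($G$-invariant subellipticity, closed range, and the low-energy-projector substitution for $B$), without which neither the smoothing statements nor the comparison of your parametrix with the actual projector $B_G$ can be justified.
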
 

The above theorem lays a foundation to the study of Toeplitz quantization on complex manifolds with boundary.
We refer the reader to the discussion before \eqref{e-gue171006aIb} for the meaning of $F\equiv G \mod C^\infty((U\times U)\cap(\ol M\times\ol M))$. 

Before we formulate our main result about geometric quantization on complex manifolds with boundary, we give some historic remark about geometric quantization theory. 
The famous geometric quantization conjecture of Guillemin and Sternberg~\cite{GS} states that for a compact pre-quantizable symplectic manifold admitting a Hamiltonian action of a compact Lie group, the principle of ``quantization commutes with reduction" holds. This conjecture was first proved independently by Meinrenken \cite{M96} and Vergne \cite{V96} for the case where the Lie group is abelian, and by Meinrenken \cite{M98} in the general case, then Tian-Zhang \cite{TZ98} gave a purely analytic proof in general case with various generalizations, see \cite{V02} for a survey and complete references on this subject. In the case of a non-compact symplectic manifold $M$ which has a compact Lie group action $G$, this question was solved by Ma-Zhang \cite{MZ09, MZ14} as a solution to a conjecture of Vergne in her ICM 2006 plenary lecture \cite{V07}, see \cite{M10} for a survey. Paradan \cite{P11} gave a new proof, cf. also the recent work \cite{HS17}. A natural choice for the quantum spaces of a compact symplectic manifold is the kernel of the Dirac operator. 

In~\cite{MZ}, Ma-Zhang established the asymptotic expansion 
of the $G$-invariant Bergman kernel for a positive line bundle $L$ over 
a compact symplectic manifold $M$ and by using the asymptotic expansion 
of $G$-invariant Bergman kernel, they could establish the
``quantization commutes with reduction'' theorem
when the power of the line bundle $L$ is high enough. In~\cite{HH}, the first and second authors established the asymptotic expansion 
of the $G$-invariant Szeg\H{o} kernel for $(0,q)$ forms on a non-degenerate CR manifold and 
they could establish the
``quantization commutes with reduction'' theorem
when the CR manifold admits a circle action. 

The quantization of strongly pseudoconvex or more generally contact manifolds via the Szeg\H{o} projector or its generalizations was developed by Boutet de Monvel and Guillemin \cite{BG81} and can be applied to the K\"{a}hler quantization by using the above construction (see e. g. \cite{Ch06, Pa03}). In \cite{HMM}, the first author, Ma and Marinescu study the quantization of CR manifolds and the principle of ``quantization commutes with reduction". An important difference between the CR setting and the K\"{a}hler/symplectic setting is that the quantum spaces in the case of compact K\"{a}hler/symplectic manifolds are finite dimensional, whereas for the compact strongly pseudoconvex CR manifolds that they consider the quantum spaces consisting of CR functions are infinite dimensional.

For manifolds with boundary, in \cite{TZ99}, Tian-Zhang extended the results in \cite{TZ98} to the case where the compact symplectic manifold with a non-empty boundary under the assumption that the preimage of the moment map of the regular value $0$ in the dual of the Lie algebra does not touch the boundary. The quantum spaces considered in \cite{TZ99} are the kernel of the Dirac operator with Atiyah-Patodi-Singer type boundary conditions \cite{APS75} and hence finite dimensional. Following the same line of \cite{HMM}, in this paper we study the quantization of complex manifolds with boundary and the principle of ``quantization commutes with reduction". The quantum spaces we consider are the spaces of $L^2$ holomorphic functions and could be infinite dimensional. 

We now formulate our main results. 
By Assumption~\ref{a-gue2204300124}, $\mu^{-1}_X(0)$ is a $d$-codimensional submanifold of $X$. We decompose $\mu^{-1}(0)\cap X$ into two parts $\widehat X$ and $\widetilde X$ on which the Levi-form is strongly pseudoconvex and pseudoconcave, respectively. From now on, we assume that $\widehat X$ is non-emptey. Let
\begin{equation}\label{e-gue230411yyda}
\widehat X_G:=\widehat X/G,~\widetilde X_G=\widetilde X/G.
\end{equation} 
It was proved in~\cite[Theorem 2.6]{HMM} that $\widehat X_G$ is a compact CR manifold. Let 
\[\ddbar_b: {\rm Dom\,}\ddbar_b\subset L^2(\widehat X_G)\To L^2_{(0,1)}(\widehat X_G)\]
be the tangential Cauchy-Riemann operator. For every $s \in \mathbb{R}$, let $W^s(\overline{M})$ and $W^s(\widehat X_G)$ denotes the Sobolev spaces of $\overline{M}$ and $\widehat X_G$ of order $s$ (see the discussion after Definition~\ref{def-111217}, for the precise meaning of $W^s(\overline{M})$).  Let $(\,\cdot\,|\,\cdot\,)_{\widehat X_G}$ be the $L^2$ inner product on $L^2(\widehat X_G)$ induced naturally by $\langle\,\cdot\,|\,\cdot\,\rangle$. For every $s\in \mathbb{R}$, put
 \begin{equation}\label{e-gue240411yydb}
\begin{split}
& H^0(\overline{M})_s := \left\{ u \in W^s(\overline{M});\, \ddbar u =0~\text{in the sense of distributions} \right\}, \\
& H^0_b(\widehat X_G)_{s} := \left\{ u \in W^s(\widehat X_G);\, \ddbar_bu =0~\text{in the sense of distributions} \right\}, \\
& H^0(\overline{M})^G_s := \left\{ u \in H^0(\overline{M})_s;\, h^*u=u~\text{in the sense of distributions for all $h \in G$} \right\}.
\end{split}
\end{equation}
We write $H^0_b(\widehat X_G):=H^0_{b}(\widehat X_G)_0$.
Let $\iota_{\widehat X} : \widehat X\hookrightarrow X$ be the natural inclusion and let $\iota^*_{\widehat X} : C^\infty(X) \to C^\infty(\widehat X)$ be the pull-back by $\iota_{\widehat X}$. Let $\iota_{G, \widehat X}: C^\infty(\widehat X)^G \to C^\infty(\widehat X_G)$ be the natural identification. Let
\begin{equation}\label{e-gue2204301555m}
\tilde\sigma_{G} : H^0(\overline{M})^G \cap C^\infty(\ol M) \to H^0_b(\widehat X_G), \qquad \tilde\sigma_G =\iota_{G, \widehat X} \circ \iota^*_{\widehat X}\circ\gamma,
\end{equation}
where $\gamma$ denotes the operator of the restriction to the boundary $X$.
The map (\ref{e-gue2204301555m}) is well defined. The map $\tilde\sigma_G$ does not extend to a bounded operator on $L^2$, so it is necessary to consider its extension to Sobolev spaces. 
We can check that $\tilde\sigma_G$ extends by density to a bounded operator
\begin{equation}\label{e-gue2204301610m}
\tilde\sigma_G =\tilde\sigma_{G, s} : H^0(\overline{M})^G_s \to H^0_b(\widehat X_G)_{s-\frac{d}{4}-\frac{1}{2}},~\text{for every $s \in \mathbb{R}$}
\end{equation}
(see Theorem~\ref{t-701} and Theorem~\ref{t-gue230321yyd} below). For every $s\in\mathbb R$, put 
\begin{equation}\label{e-gue230328yydaz}
{\rm Coker\,}\tilde\sigma_{G,s}={\rm Coker\,}\tilde\sigma_G:=\{u\in H^0_b(\widehat X_G)_{s-\frac{d}{4}-\frac{1}{2}};\, (\,u\,|\,\tilde\sigma_{G,s}v)_{\widehat X_G}=0, \forall v\in H^0(\overline M)^G_s\cap C^\infty(\overline M)\}.
\end{equation} 
The following is our second main result

\begin{theorem}\label{t-gue230411yyd}
Let $M$ be a relatively compact open subset with smooth boundary $X$ of a complex manifold $M'$ of dimension $n$, $n\geq3$. Let $G$ be a compact Lie group acting on $M'$ such that Assumption~\ref{a-gue2204300124} holds.  With the notations used above, assume that ${\rm dim\,}_{\mathbb R}\widehat X_G\geq 5$. 
Then, for every $s \in \mathbb{R}$, the Guillemin-Sternberg map \eqref{e-gue2204301610m} is Fredholm. More precisely, $\operatorname{Ker}\tilde\sigma_{G, s}$ and ${\rm Coker\,}\tilde\sigma_{G,s}$ are finite dimensional subspaces of $H^0(\overline{M})^G\cap C^\infty(\ol M)^G$ and $H^0_b(\widehat X_G)\cap C^\infty(\widehat X_G)$ respectively, $\operatorname{Ker}\tilde\sigma_{G, s}$ and ${\rm Coker\,}\tilde\sigma_{G,s}$ are independent of $s$.    
\end{theorem}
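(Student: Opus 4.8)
The plan is to reduce the Fredholm property to the complex Fourier integral operator (FIO) calculus furnished by Theorem~\ref{t-510}, by constructing a two-sided parametrix for $\tilde\sigma_G$ modulo smoothing operators. First I would localize near $\mu^{-1}(0)\cap X$: by the first assertion of Theorem~\ref{t-510}, $\tau B_G\equiv0$ and $B_G\tau\equiv0\mod C^\infty(\ol M\times\ol M)$ whenever ${\rm supp\,}\tau\cap\mu^{-1}(0)\cap X=\emptyset$, and by \eqref{e-gue230411yyd} the kernel $B_G$ is smoothing near the pseudoconcave part $\widetilde X$. Hence, modulo smoothing errors, the whole problem is governed by the microlocal model \eqref{e-gue230319ycdem}--\eqref{e-gue230319ycdaIm} of $B_G$ near $\widehat X$. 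The decisive structural input is the last line of \eqref{e-gue230319ycdgm}, namely $\Psi|_{D\times D}=\Phi$, where $\Phi$ is the phase of the $G$-invariant Szeg\H{o} kernel of $\widehat X_G$ from~\cite[Theorem 1.5]{HH}: the Bergman phase on $\ol M$ and the reduced Szeg\H{o} phase on $\widehat X_G$ agree on the boundary, which is exactly what allows the two oscillatory integrals to be composed by complex stationary phase.

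\textbf{Construction of the parametrix.} Let $S_{\widehat X_G}$ denote the Szeg\H{o} projection onto $H^0_b(\widehat X_G)$, which by~\cite{HH} is a complex FIO with phase $\Phi$; recall that $B_G$ is the identity on $H^0(\ol M)^G$ and $S_{\widehat X_G}$ the identity on $H^0_b(\widehat X_G)$. I would define a lifting operator $P$ in the reverse direction built from the complex-conjugate FIO data: transport a function on $\widehat X_G$ to a $G$-invariant CR function on $\widehat X\subset X$ via $\iota_{G,\widehat X}^{-1}$ and $G$-averaging, extend it into $M$ using the Bergman kernel of Theorem~\ref{t-510} (equivalently, apply $B_G$ to a Poisson-type boundary extension). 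The heart of the argument is then to show, by the Melin--Sj\"ostrand complex stationary phase method applied to the composed kernels, that
\begin{equation*}
\tilde\sigma_G\circ P=S_{\widehat X_G}+K_1,\qquad P\circ\tilde\sigma_G=B_G+K_2,
\end{equation*}
with $K_1,K_2$ smoothing on $\widehat X_G$ and on $\ol M$ respectively. Because the phases coincide on the boundary, the critical set of each composed phase is nondegenerate --- the nondegeneracy coming from positivity of the Levi form on $D$ together with $\det R_x\neq0$ and ${\rm Im\,}\Psi>0$ off the diagonal --- so the FIO composition yields an operator of the expected order whose leading symbol is computed from $b_0$ in \eqref{e-gue230327ycda} and is nowhere vanishing; matching it against the symbol of $S_{\widehat X_G}$ forces the two compositions to equal the respective projections up to a smoothing remainder.

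\textbf{Fredholmness, regularity, and $s$-independence.} Granting these two identities, on each Sobolev scale $K_1,K_2$ are smoothing, hence compact on $H^0_b(\widehat X_G)_{s-\frac d4-\frac12}$ and on $H^0(\ol M)^G_s$; since $\tilde\sigma_{G,s}$ is bounded by \eqref{e-gue2204301610m}, the existence of an inverse modulo compact operators on both sides shows that $\tilde\sigma_{G,s}$ is Fredholm for every $s\in\mathbb R$. For regularity, if $u\in{\rm Ker\,}\tilde\sigma_{G,s}$ then $P\tilde\sigma_Gu=0$ gives $u=-K_2u\in C^\infty(\ol M)$, so ${\rm Ker\,}\tilde\sigma_{G,s}\subset H^0(\ol M)^G\cap C^\infty(\ol M)^G$ and coincides with the kernel of the smooth map, independently of $s$. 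Dually, any $u\in{\rm Coker\,}\tilde\sigma_{G,s}$ as in \eqref{e-gue230328yydaz} is $L^2$-orthogonal to ${\rm Ran\,}\tilde\sigma_G\supset{\rm Ran\,}(\tilde\sigma_GP)$; using $S_{\widehat X_G}u=u$ and self-adjointness of $S_{\widehat X_G}$ this forces $u=-S_{\widehat X_G}K_1^*u$, which is smooth, whence ${\rm Coker\,}\tilde\sigma_{G,s}\subset H^0_b(\widehat X_G)\cap C^\infty(\widehat X_G)$, again independent of $s$. Finite dimensionality of both spaces is then automatic from compactness of $K_1,K_2$.

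\textbf{Main obstacle.} The crux is the parametrix step: carrying out the complex stationary phase composition of the boundary Bergman FIO with the reduced Szeg\H{o} FIO and verifying that the leading symbol of the composed operator is nonvanishing and matches that of $S_{\widehat X_G}$, so that the remainders are genuinely smoothing. This uses the precise form of $\Psi$ near $(p,p)$ in \eqref{e-gue230319ycdaIm}, the boundary compatibility $\Psi|_{D\times D}=\Phi$, and the explicit coefficient \eqref{e-gue230327ycda}. The hypothesis ${\rm dim\,}_{\mathbb R}\widehat X_G\geq5$ enters precisely here: it guarantees, via~\cite{HH,HMM}, that $\ddbar_b$ on $\widehat X_G$ has closed range in degree $0$ and that $S_{\widehat X_G}$ is a well-behaved complex FIO, so that $H^0_b(\widehat X_G)$ is the correct target space and the symbol calculus underpinning the composition identities is valid.
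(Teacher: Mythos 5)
Your overall strategy --- exploiting the Fourier integral operator description of $B_G$ from Theorem~\ref{t-510}, the boundary compatibility $\Psi|_{D\times D}=\Phi$, and Melin--Sj\"ostrand composition --- is the right circle of ideas, and your identification of where ${\rm dim\,}_{\mathbb R}\widehat X_G\geq 5$ enters (closed range of $\ddbar_{b,\widehat X_G}$) is correct. But there is a genuine gap at exactly the point you call the crux: the claim that a lifting operator can be arranged so that $\tilde\sigma_G\circ P-S_{\widehat X_G}$ and $P\circ\tilde\sigma_G-B_G$ are \emph{smoothing}. Matching leading symbols controls only the top-order term of the composed kernel. What the stationary phase method actually yields --- and what the paper obtains in the proofs of Theorem~\ref{t-701} and Theorem~\ref{t-gue230320ycd} --- are identities of the form $S_G(P^\ast P)\hat\sigma\hat\sigma^\ast=S_G(I+R)S_G$, where $R$ is a complex Fourier integral operator of the \emph{same order and same phase} as the $G$-invariant Szeg\H{o} projector, whose leading symbol merely vanishes on the diagonal of $\mu^{-1}(0)\cap X$; the lower-order terms do not vanish and $R$ is not smoothing. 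Nor is there an iterative Neumann-series scheme to improve the remainder to all orders: the operators being composed are idempotent FIOs projecting onto infinite-dimensional spaces, with wave front set concentrated on the degenerate cone over $\mu^{-1}(0)$, not elliptic operators, so one cannot solve the full symbol hierarchy for a correction of $P$ inside the calculus. This is precisely why the paper does not build a parametrix modulo smoothing but instead invokes the nontrivial external input~\cite[Theorem 4.9]{HMM} (and its variant~\cite[Theorem 4.15]{HMM}), asserting that ${\rm Ker\,}(I+R)$ is a finite-dimensional subspace of smooth functions for such $R$. Your Fredholm, regularity and $s$-independence arguments rest entirely on $K_1,K_2$ being smoothing (e.g.\ $u=-K_2u$, $u=-S_{\widehat X_G}K_1^*u$), so they do not go through as written.

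There is also a structural difference worth noting: the paper never composes the $M$-level Bergman FIO directly with the reduced Szeg\H{o} projector on $\widehat X_G$. It factors $\tilde\sigma_{G,s}$ through the full boundary: first the restriction map $\hat\sigma_s:H^0(\overline M)^G_s\to H^0_b(X)^G_{s-\frac{1}{2}}$, which is injective for trivial reasons and whose cokernel is handled via the key identity $B_G\equiv P\tilde S_G(P^\ast P)^{-1}P^\ast$ of Theorem~\ref{t-222}; and then the CR Guillemin--Sternberg map $\sigma_{1,s}:H^0_b(X)^G_s\to H^0_b(\widehat X_G)_{s-\frac{d}{4}}$, whose Fredholm property is the main theorem of~\cite{HMM}, quoted here as Theorem~\ref{t-gue230321yyd} together with the cokernel identification of Theorem~\ref{t-gue230321yydI}. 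This factorization is what reduces the hardest step to a known theorem; your direct route would force you to reprove the CR reduction theorem and the finite-dimensionality theorem simultaneously. Your proposal can be repaired along the paper's lines by replacing the ``smoothing remainder'' claim with the weaker, provable statement that the compositions are of the form $S(I+R)S$ with leading term vanishing on the diagonal over $\mu^{-1}(0)$, and then importing~\cite[Theorems 4.9 and 4.15]{HMM}.
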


The assumption ${\rm dim\,}_{\mathbb R}\widehat X_G\geq 5$ in Theorem~\ref{t-gue230411yyd} can be replaced by $\ddbar_{b, \widehat X_G}$ has closed range, where $\ddbar_{b, \widehat X_G}$ denotes the tangential Cauchy-Riemann operator on $\widehat X_G$. 

Theorem~\ref{t-gue230411yyd} tells us that up to some finite dimensional spaces, the quantum space $H^0(\overline{M})^G$ is isomorphic to the space of $L^2$ CR functions on $\widehat X_G$. 

Suppose that 
\begin{equation}\label{e-gue230411yydc}
\begin{split}
\mbox{$0$ is a regular value of $\mu$, $G$ acts freely on $\mu^{-1}(0)$.}
\end{split}
\end{equation}
Under \eqref{e-gue230411yydc}, $\mu^{-1}(0)$ is a $d$-codimensional submanifold of $M'$. Let
\begin{equation}\label{e-gue22-4301336}
M_G' := \mu^{-1}(0)/G, \quad M_G:=(\mu^{-1}(0) \cap M)/G.
\end{equation}
In Theorem~\ref{t-gue230328yyd} below, we will show that $M_G$ is a complex manifold in $M'_G$ with smooth boundary $X_G$. In fact, $X_G=\widehat X_G\cup \widetilde X_G$ and thus the the boundary $X_G$ is non-degenerate, hence $M_G$ is a domain in the complex manifold $M'_G$ with non-degenerate boundary. 

Let $\iota : \mu^{-1}(0)\cap\ol M\hookrightarrow \overline{M}$ be the natural inclusion and let $\iota^* : C^\infty(\overline{M}) \to C^\infty(\mu^{-1}(0)\cap\ol M)$ be the pull-back by $\iota$. Let $\iota_G: C^\infty(\mu^{-1}(0)\cap\ol M)^G \to C^\infty(\overline{M}_G)$ be the natural identification. Let
\begin{equation}\label{e-gue2204301555}
\sigma_G : H^0(\overline{M})^G \cap C^\infty(\overline{M}) \to H^0(\overline{M}_G), \qquad \sigma_G =\iota_G \circ \iota^*.
\end{equation}
The map (\ref{e-gue2204301555}) is well defined, see the construction of the complex reduction in Section~\ref{s-gue2204301601}. The map $\sigma_G$ does not extend to a bounded operator on $L^2$, so it is necessary to consider its extension to Sobolev spaces. 
Actually, we have 
\[\sigma_G=P_{M_G}\sigma_1(P^*P)^{-1}P^*\ \ \mbox{on $H^0(\overline{M})^G \cap C^\infty(\overline{M})$},\]
where $P_{M_G}$ and $P$ are Poisson operators on $M$ and $M_G$ respectively and $\sigma_1$ is the CR Guillemin-Sternberg map introduced in~\cite[(1.5)]{HMM}. From~\cite[Theorem 5.3]{HMM} and the regularity property for Poisson operator (see Section~\ref{s-gue171010}), we can check that $\sigma_G$ extends by density to a bounded operator
\begin{equation}\label{e-gue2204301610}
\sigma_G = \sigma_{G, s} : H^0(\overline{M})^G_s \to H^0(\overline{M}_G)_{s-\frac{d}{4}},~\text{for every $s \in \mathbb{R}$}.
\end{equation}
This operator can be thought as a Guillemin-Sternberg map in the setting of complex manifolds with boundary. It maps the ``first quantize and then reduce" space (the space of $G$-invariant Sobolev holomorphic functions on $\overline{M}$) to the ``first reduce and then quantize" space (the space of Sobolev holomorphic functions on $\overline{M}_G$). Indeed, from the point of view of quantum mechanics, the Hilbert space structures play an essential role. It is natural, then, to investigate the extent to which the holomorphic Guillemin-Sternberg map is Fredholm. Let $(\,\cdot\,|\,\cdot\,)_{M_G}$ be the $L^2$ inner product on $L^2(M_G)$ induced naturally by $\langle\,\cdot\,|\,\cdot\,\rangle$. For every $s\in\mathbb R$, put 
\begin{equation}\label{e-gue230328yyda}
{\rm Coker\,}\sigma_{G,s}={\rm Coker\,}\sigma_G:=\{u\in H^0(\ol M_G)_{s-\frac{d}{4}};\, (\,u\,|\,\sigma_G v)_{M_G}=0, \forall v\in H^0(\overline M)^G_s\cap C^\infty(\overline M)\}.
\end{equation} 

The third main result of this work is the following.

\begin{theorem}\label{t-que2204301824}
Let $M$ be a relatively compact open subset with smooth boundary $X$ of a complex manifold $M'$ of dimension $n$, $n\geq3$. Let $G$ be a compact Lie group acting on $M'$ such that Assumption~\ref{a-gue2204300124} and \eqref{e-gue230411yydc} hold. With the notations used above, assume that ${\rm dim\,}_{\mathbb C}M_G\geq 3$.
Then, for every $s \in \mathbb{R}$, the holomorphic Guillemin-Sternberg map \eqref{e-gue2204301610} is Fredholm. More precisely, $\operatorname{Ker} \sigma_{G, s}$ and ${\rm Coker\,}\sigma_{G,s}$ are finite dimensional subspaces of $H^0(\overline{M})^G\cap C^\infty(\ol M)^G$ and $H^0(\overline{M}_G)\cap C^\infty(M_G)$ respectively, $\operatorname{Ker} \sigma_{G, s}$ and ${\rm Coker\,}\sigma_{G,s}$ are independent of $s$.
\end{theorem}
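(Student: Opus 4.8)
The plan is to reduce the Fredholm property of the holomorphic (interior) Guillemin--Sternberg map $\sigma_{G,s}$ to that of the CR (boundary) map $\tilde\sigma_{G,s}$, for which Theorem~\ref{t-gue230411yyd} already gives the answer. The bridge is the factorization $\sigma_G=P_{M_G}\sigma_1(P^*P)^{-1}P^*$ recorded before \eqref{e-gue2204301610}. First I would analyze the composite $(P^*P)^{-1}P^*$. Since $P$ is the Poisson operator extending boundary CR data to holomorphic functions on $M$, the $G$-invariant Bergman projection of Theorem~\ref{t-510} admits the representation $B_G=P(P^*P)^{-1}P^*$; the complex Fourier integral operator structure of $B_G$ established there forces $P^*P$ to be invertible on the relevant Hardy--Sobolev spaces, so $(P^*P)^{-1}$ is bounded with bounded inverse and $(P^*P)^{-1}P^*$ is a left inverse of $P$. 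Restricted to $H^0(\overline M)^G$, which coincides modulo smoothing with the image of $P$, this left inverse agrees with the boundary trace $\gamma$. Consequently $\sigma_1(P^*P)^{-1}P^*\equiv\sigma_1\circ\gamma=\iota_{G,\widehat X}\circ\iota^*_{\widehat X}\circ\gamma=\tilde\sigma_G$ up to smoothing, and the factorization collapses to
\[
\sigma_G\equiv P_{M_G}\,\tilde\sigma_G \pmod{\text{smoothing}}
\]
on $H^0(\overline M)^G\cap C^\infty(\overline M)$.

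Next I would show that the reduced Poisson operator $P_{M_G}:H^0_b(\widehat X_G)_{t}\to H^0(\overline M_G)_{t+\frac12}$ is Fredholm, in fact injective with finite--dimensional cokernel. Injectivity is immediate, since a boundary CR datum whose holomorphic extension vanishes is itself zero. For the cokernel I would use the asymptotic expansion of the Bergman kernel of $M_G$ furnished by the non-equivariant specialization of Theorem~\ref{t-510} (equivalently, the Boutet de Monvel--Sj\"ostrand expansion on the pseudoconvex part): by \eqref{e-gue230411yyd} the kernel is smoothing along the pseudoconcave part $\widetilde X_G$, so every $L^2$ holomorphic function on $M_G$ is, modulo a finite--dimensional smooth space, the Poisson extension of its boundary values on $\widehat X_G$. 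The regularity of the Poisson operator (Section~\ref{s-gue171010}) then shows that $P_{M_G}$ preserves the Sobolev scale and sends smooth data to smooth functions, so $\operatorname{Coker}P_{M_G}$ is a finite--dimensional subspace of $H^0(\overline M_G)\cap C^\infty(M_G)$ that is independent of the regularity level.

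With these two facts the conclusion is a composition argument. By Theorem~\ref{t-gue230411yyd} the map $\tilde\sigma_{G,s}$ is Fredholm with finite--dimensional, smooth, $s$-independent kernel and cokernel; composing with the Fredholm operator $P_{M_G}$ and absorbing the smoothing remainder (compact on every Sobolev space) shows that $\sigma_{G,s}$ is Fredholm for every $s\in\mathbb R$. Since $P_{M_G}$ is injective, $\operatorname{Ker}\sigma_{G,s}=\operatorname{Ker}\tilde\sigma_{G,s}$, a finite--dimensional, $s$-independent subspace of $H^0(\overline M)^G\cap C^\infty(\overline M)^G$. The cokernel of $\sigma_{G,s}$ fits into a short exact sequence built from $\operatorname{Coker}\tilde\sigma_{G,s}$ and $\operatorname{Coker}P_{M_G}$, both finite--dimensional and smooth, whence $\operatorname{Coker}\sigma_{G,s}$ is finite--dimensional, $s$-independent, and contained in $H^0(\overline M_G)\cap C^\infty(M_G)$.

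I expect the main obstacle to be the precise analysis of the reduced Poisson operator $P_{M_G}$ on the non-degenerate boundary $X_G=\widehat X_G\cup\widetilde X_G$ --- in particular verifying that $P^*P$ and its reduced analogue are genuinely invertible on the Hardy--Sobolev spaces and that the pseudoconcave part $\widetilde X_G$ contributes only a smoothing error. This is exactly where the full structure of the $G$-invariant Bergman kernel from Theorem~\ref{t-510} (its vanishing on the concave side \eqref{e-gue230411yyd} and its Fourier integral form \eqref{e-gue230319ycdem} on the convex side) is indispensable, and making the reduction $\sigma_G\equiv P_{M_G}\tilde\sigma_G$ rigorous rather than merely formal is the crux of the argument.
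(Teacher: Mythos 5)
Your overall strategy --- factor $\sigma_{G,s}$ through the boundary map $\tilde\sigma_{G,s}$ of Theorem~\ref{t-gue230411yyd} and then handle one remaining operator on the reduced space --- is the paper's strategy, but your third factor is wrong, and this is a genuine gap rather than a cosmetic one. The paper writes $\sigma_G=\sigma_2\circ\sigma_1\circ\hat\sigma$ with $\sigma_2=B_{M_G}P_{M_G}$; you drop the Bergman projection $B_{M_G}$. For a general $u\in H^0_b(\widehat X_G)_t$ (extended by zero on $\widetilde X_G$), the Poisson extension $P_{M_G}u$ solves a Dirichlet problem for $\tilde\Box^{(0)}_f$ and is merely harmonic, not holomorphic; so your map $P_{M_G}:H^0_b(\widehat X_G)_t\to H^0(\ol M_G)_{t+\frac{1}{2}}$ is not even well defined. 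It agrees with $\sigma_2$ on the image of $\sigma_1\circ\hat\sigma$ (where the extension happens to be holomorphic because it coincides with $\sigma_G u$), but Fredholmness must be established on the whole Hardy--Sobolev space, not on that image. For the same reason your identity $B_G=P(P^*P)^{-1}P^*$ is false: $P(P^*P)^{-1}P^*$ is the orthogonal projection onto the range of the Poisson operator (the $\tilde\Box^{(0)}_f$-harmonic functions), which strictly contains $H^0(\ol M)$; the correct statement is Theorem~\ref{t-222}, $B_G\equiv P\tilde S_G(P^*P)^{-1}P^*$, where the Szeg\H{o}-type factor $\tilde S_G$ is essential. (Invertibility of $P^*P$ needs none of this --- it is the classical fact from \cite{B71} recorded in Section~\ref{s-gue171010}.) Once the third factor is corrected to $\sigma_2=B_{M_G}P_{M_G}$, its injectivity is no longer ``immediate'' --- the identity $\gamma P_{M_G}=I$ no longer gives a left inverse after projecting --- and indeed the paper only proves $\operatorname{Ker}\sigma_{2,s}$ is finite dimensional, so your clean identification $\operatorname{Ker}\sigma_{G,s}=\operatorname{Ker}\tilde\sigma_{G,s}$ does not survive either.

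The second gap is your cokernel argument for the reduced-space operator. From the vanishing of the Bergman kernel on the pseudoconcave side (the analogue of \eqref{e-gue230411yyd} on $M_G$) you can only conclude that the contribution of $\widetilde X_G$ is \emph{smoothing}, hence compact on each Sobolev space --- not finite rank --- so ``every $L^2$ holomorphic function on $M_G$ is, modulo a finite-dimensional smooth space, the Poisson extension of its boundary values on $\widehat X_G$'' is exactly the assertion to be proved, and it does not follow from what you cite. The paper's mechanism (Theorem~\ref{t-gue230320ycd}) is different and genuinely nontrivial: one computes $\sigma_2^*\sigma_2$ and, via \cite[Theorem 1.2]{Hsiao08}, the identity $\sigma_2(P^*_{M_G}P_{M_G})^{-1}\sigma_2^*=B_{M_G}(I+R_{M_G})B_{M_G}$, where $R_{M_G}$ is a complex Fourier integral operator of the \emph{same order and phase} as $B_{M_G}$ whose leading term vanishes on the diagonal --- in particular $R_{M_G}$ is not compact, so no soft Fredholm perturbation argument applies --- and then invokes \cite[Theorem 4.15]{HMM} to conclude that $\operatorname{Ker}(I+R_{M_G})$ (and likewise $\operatorname{Ker}(I+\hat R)$ for the kernel side) is a finite-dimensional subspace of smooth functions. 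This step, together with the closed-range hypothesis for $\ddbar_{b,X_G}$ supplied by ${\rm dim\,}_{\mathbb C}M_G\geq 3$ (which your outline never uses, though it is needed both here and in Theorems~\ref{t-gue230321yyd} and~\ref{t-gue230321yydI}), is the crux of the proof and is missing from your proposal.
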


In should be mentioned that the condition ${\rm dim\,}_{\mathbb C}M_G\geq 3$ in Theorem~\ref{t-que2204301824} can be replaced by $\ddbar_{b,X_G}$ has closed range.

Until further notice, we will not assume \eqref{e-gue230411yydc}. 

Suppose that $M'$ admits another compact holomorphic Lie group action $H$ such that $H$ commutes with $G$ and $H$ preserves the boundary $X$. Recall that $\mu^{-1}(0)\cap X=\widehat X\cup\widetilde X$ on which the Levi form is strongly pseudoconvex and pseudoconcave, respectively. Let 
\[\mathcal{R}=\set{\mathcal{R}_m;\, m=1,2,\ldots}\]
denote the set of all irreducible unitary representations of the group $H$, including only one representation from each equivalence class.
For each $\mathcal{R}_m$, we  write $\mathcal{R}_m$ as a matrix $\left(\mathcal{R}_{m,j,k}\right)^{d_m}_{j,k=1}$, where $d_m$ is the dimension of $\mathcal{R}_m$. Fix a Haar measure $d\nu(h)$ on $H$ so that $\int_Hd\nu(h)=1$. Take an irreducible unitary representation $\mathcal{R}_m$, for every $h\in H$, put 
\[\chi_m(h):={\rm Tr\,}\left(\mathcal{R}_{m,j,k}(h)\right)^{d_m}_{j,k=1}=\sum^{d_m}_{j=1}\mathcal{R}_{m,j,j}(h).\]
Let $u\in C^\infty(M')$ be a smooth function. The $m$-th Fourier component of $u$ is given by 
\[u_m(x):=d_m\int_H (h^*u)(x)\ol{\chi_m(h)}d\nu(h)\in C^\infty(M').\]
For every $m\in\mathbb N$, put 
\[C^\infty_m(M'):=\set{f\in C^\infty(M');\, \mbox{there is an $F\in C^\infty(M')$ such that $f=F_m$ on $M'$}}.\]
For every $m\in\mathbb N$, we define $C^\infty_m(\ol M)$, $C^\infty_m(\widehat X_G)$ in the standard way. 
For every $m\in\mathbb N$, let 
\begin{equation}\label{e-gue230328ycdm}
\begin{split}
&H^0(\ol M)_{(m)}:=H^0(\ol M)\cap C^\infty_m(\ol M),\\
&H^0(\ol M)^G_{(m)}:=H^0(\ol M)^G\cap C^\infty_m(\ol M),\\
&H^0_b(\widehat X_G)_{(m)}:=H^0_b(\widehat X_G)\cap C^\infty_m(\widehat X_G).
\end{split}
\end{equation}
Let $\mathfrak{h}$ denote the Lie algebra of $H$. 
For any $\xi \in \mathfrak{h}$, as \eqref{e-gue230327ycd}, we write $\xi_{M',H}$ to denote the vector field on $M'$ induced by $\xi$. 
For $x \in M'$, set
\begin{equation}\label{e-gue2204290306m}
\underline{\mathfrak{h}}_{x}={\rm Span\,}\left\{ \xi_{M',H}(x);\, \xi\in\mathfrak{h}\, \right\}.
\end{equation} 
We assume that 
\begin{equation}\label{e-gue230328ycdp}
T^{1,0}_x\widehat X\oplus T^{0,1}_x\widehat X\oplus \underline{\mathfrak{h}}_{x}=\mathbb CT_x\widehat X,\ \ \mbox{for every $x\in \widehat X$}, 
\end{equation}
where $T^{1,0}_x\widehat X:=T^{1,0}_xM'\cap\mathbb CT_x\widehat X$, $T^{0,1}_x\widehat X:=T^{0,1}_xM'\cap\mathbb CT_x\widehat X$, $T^{1,0}M'$ and $T^{0,1}M'$ denotes the holomorphic tangent bundle of $M'$ and the anti-holomorphic tangent bundle of $M'$ respectively. We can repeat the proof of~\cite[Theorem 3.1, Appendix]{HHLS20} with minor change and deduce that 
\begin{equation}\label{e-gue230328ycdq}
\begin{split}
&\mbox{${\rm dim\,}H^0(\ol M)_{(m)}<+\infty$, ${\rm dim\,}H^0(\ol M)^G_{(m)}<+\infty$, ${\rm dim\,}H^0_b(\widehat X_G)_{(m)}<+\infty$, for every $m\in\mathbb N$},\\
&H^0(\ol M)=\oplus_{m\in\mathbb N}H^0(\ol M)_{(m)},\ \ H^0(\ol M)^G=\oplus_{m\in\mathbb N}H^0(\ol M)^G_{(m)},\ \
H^0_b(\widehat X_G)=\oplus_{m\in\mathbb N}H^0_b(\widehat X_G)_{(m)}.
\end{split}
\end{equation}

From Theorem~\ref{t-gue230411yyd}, Theorem~\ref{t-que2204301824} and \eqref{e-gue230328ycdq}, we deduce 

\begin{theorem}\label{t-gue230328yydr}
With the same assumptions used in Theorem~\ref{t-gue230411yyd}, suppose that $M'$ admits another compact holomorphic Lie group action $H$ such that $H$ commutes with $G$ and $H$ preserves the boundary $X$. Under the same notations above and assume that \eqref{e-gue230328ycdp} holds. Then, for $\abs{m}\gg1$, we have 
\[{\rm dim\,}H^0(\ol M)^G_{(m)}={\rm dim\,}H^0_b(\widehat X_G)_{(m)}.\] 

Assume further that \eqref{e-gue230411yydc} holds. Then, for $\abs{m}\gg1$, we have 
\[{\rm dim\,}H^0(\ol M)^G_{(m)}={\rm dim\,}H^0(\ol M_G)_{(m)}.\] 
\end{theorem}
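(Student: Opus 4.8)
The plan is to promote the Fredholm statements of Theorem~\ref{t-gue230411yyd} and Theorem~\ref{t-que2204301824} to $H$-equivariant statements and then read off the dimension identities one $H$-isotype at a time. The starting observation is that both Guillemin--Sternberg maps intertwine the $H$-action. Indeed, since $H$ commutes with $G$ and preserves $X$, it preserves $\mu^{-1}(0)\cap X$ together with its strongly pseudoconvex locus $\widehat X$, hence descends to $\widehat X_G=\widehat X/G$ (and, under \eqref{e-gue230411yydc}, preserves $\mu^{-1}(0)$ and descends to $M_G$); each factor $\gamma$, $\iota^*_{\widehat X}$, $\iota_{G,\widehat X}$ in \eqref{e-gue2204301555m} (respectively $\iota^*$, $\iota_G$ in \eqref{e-gue2204301555}) is then $H$-equivariant. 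Therefore $\tilde\sigma_G$ and $\sigma_G$ commute with the Fourier projections underlying \eqref{e-gue230328ycdq} and restrict, for each $m$, to linear maps
\[
\tilde\sigma_G\big|_{(m)}\colon H^0(\ol M)^G_{(m)}\to H^0_b(\widehat X_G)_{(m)},\qquad \sigma_G\big|_{(m)}\colon H^0(\ol M)^G_{(m)}\to H^0(\ol M_G)_{(m)},
\]
between the finite-dimensional spaces of \eqref{e-gue230328ycdq}.

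Next I would isolate the finitely many exceptional modes. By Theorem~\ref{t-gue230411yyd}, $\operatorname{Ker}\tilde\sigma_{G,s}$ and $\operatorname{Coker}\tilde\sigma_{G,s}$ are finite-dimensional, independent of $s$, and consist of smooth elements of $H^0(\ol M)^G$ and $H^0_b(\widehat X_G)$, respectively; both are $H$-invariant (the kernel directly by equivariance, the cokernel because the range of an equivariant map is $H$-invariant, see the final paragraph). By \eqref{e-gue230328ycdq} a finite-dimensional $H$-invariant subspace meets only finitely many Fourier components, so there is an $N$ beyond which neither the kernel nor the cokernel has a nonzero mode-$m$ part. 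For $\abs m>N$ the map $\tilde\sigma_G\big|_{(m)}$ is then injective, since its mode-$m$ kernel vanishes, and surjective, since the range of $\tilde\sigma_{G,s}$ is closed and $\operatorname{Coker}\tilde\sigma_{G,s}$ accounts for the cokernel mode by mode; hence it is an isomorphism and $\dim H^0(\ol M)^G_{(m)}=\dim H^0_b(\widehat X_G)_{(m)}$. The second identity is obtained verbatim from Theorem~\ref{t-que2204301824} applied to $\sigma_G$, once one notes that $\dim_{\mathbb R}\widehat X_G\geq 5$ is equivalent to $\dim_{\mathbb C}M_G\geq 3$, so the dimensional hypothesis of Theorem~\ref{t-que2204301824} is automatic.

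The delicate point is the surjectivity onto the individual isotypes for large $\abs m$: since $\operatorname{Coker}\tilde\sigma_{G,s}$ is defined in \eqref{e-gue230328yydaz} as an $L^2$-orthogonal complement of the range, splitting it along Fourier modes requires the $H$-isotypic projections to be orthogonal for $(\,\cdot\,|\,\cdot\,)_{\widehat X_G}$. I would secure this by taking the Hermitian metric $\langle\,\cdot\,|\,\cdot\,\rangle$ invariant under the compact group $G\times H$, obtained by averaging the given $G$-invariant metric over $H$; this affects none of the preceding constructions, since the resulting $L^2$ spaces and the dimensions in question are unchanged under passage to an equivalent metric. Alternatively one can bypass orthogonality altogether by replacing $\operatorname{Coker}\tilde\sigma_G$ with the finite-dimensional quotient $H^0_b(\widehat X_G)/\operatorname{Image}\tilde\sigma_G$, which is an $H$-module and hence splits along \eqref{e-gue230328ycdq}, forcing all but finitely many of its mode-$m$ summands to vanish. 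Everything else is a formal consequence of $H$-equivariance combined with the finiteness built into \eqref{e-gue230328ycdq}.
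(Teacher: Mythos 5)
Your proposal is correct and is essentially the paper's own argument: the paper deduces Theorem~\ref{t-gue230328yydr} directly from Theorem~\ref{t-gue230411yyd}, Theorem~\ref{t-que2204301824} and \eqref{e-gue230328ycdq}, and your write-up simply supplies the routine details of that deduction ($H$-equivariance of $\tilde\sigma_G$ and $\sigma_G$, isotypic restriction, and the observation that the finite-dimensional, smooth, $H$-invariant kernel and cokernel meet only finitely many Fourier modes, so the restricted maps are isomorphisms for $\abs{m}\gg1$). Your care about the orthogonality of the isotypic projections (averaged metric, or the quotient-module variant) and the remark that $\dim_{\mathbb R}\widehat X_G\geq 5$ is equivalent to $\dim_{\mathbb C}M_G\geq 3$ are sound and consistent with the paper's setup.
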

 As an application of Theorem~\ref{t-510}, we establish $G$-invariant version of Fefferman's result about regularity of biholomorphic maps. Let $M_1$, $M_2$ be bounded domains in $\mathbb C^n$. Assume that $M_j, j=1, 2$ admit a compact holomorphic Lie group action $G$. Let $F: M_1\rightarrow M_2$ be a holomorphic map. $F$ is said to be $G$-invariant if $F(g\circ z)=F(z)$, for all $z\in M_1$ and $g\in G$. Then from Theorem~\ref{t-510} and by using the argument in~\cite{BL80}, we have (see Section~\ref{s-gue230324yyd}, for the details)

 \begin{theorem}\label{t-gue230329yyd}
 Let $M_1$, $M_2$ be bounded domains in $\mathbb C^n$ with smooth boundary, $n\geq3$. Assume that $M_j$ admits a compact holomorphic Lie group action $G$ and Assumption~\ref{a-gue2204300124} holds, for each $j=1, 2$. 
Let $F: M_1\To M_2$ be a $G$-invariant holomorphic map. Assume that the induced map of $F$ on the quotient space still denoted by $F: M_1/G\rightarrow M_2/G$ is onto, one-to-one and the differential of $F$ is invertible everywhere on the regular part of $M_1/G$.
 Then, $F$ extends smoothly to the boundary.
 \end{theorem}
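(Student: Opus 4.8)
The plan is to transplant the Bell--Ligocka method \cite{BL80} for Fefferman's theorem into the $G$-invariant category, using Theorem~\ref{t-510} in place of Kohn's ``Condition~$R$''. The classical argument rests on three pillars: boundary regularity of the Bergman projection, a density lemma of Bell producing smooth preimages under the projection, and the transformation law of the projection under the map. Here the relevant projection on each $M_j$ is the $G$-invariant Bergman projection $B_G$, and since $M_j$ satisfies Assumption~\ref{a-gue2204300124}, Theorem~\ref{t-510} describes $B_G$ completely. A preliminary localization simplifies matters: by the first assertion of Theorem~\ref{t-510}, $\tau B_G\equiv 0\bmod C^\infty(\overline M\times\overline M)$ whenever $\mathrm{supp}\,\tau\cap\mu^{-1}(0)\cap X=\emptyset$, so each component $F_k=z_k\circ F\in H^0(\overline{M_1})^G$ is already smooth up to the boundary away from $\mu^{-1}(0)\cap X_1$. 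It therefore suffices to prove smoothness near $\mu^{-1}(0)\cap X_1$, where $G$ acts freely and the kernel of $B_G$ has the Fourier integral form \eqref{e-gue230319ycdem}.

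I would first record the $G$-invariant Condition~$R$. The expansion \eqref{e-gue230319ycdem}--\eqref{e-gue230319ycdgm} exhibits $B_G$, modulo a smooth kernel, as a complex Fourier integral operator with $\mathrm{Im}\,\Psi\geq 0$ of the same type as the Szeg\H{o}/Bergman kernel of a strongly pseudoconvex domain; the standard calculus for such operators gives that $B_G$ is continuous on the Sobolev scale $W^s(\overline{M_j})$ and preserves $C^\infty(\overline{M_j})$ inside the $G$-invariant class. From this, Bell's density lemma follows as in \cite{BL80}: for every $G$-invariant $h\in H^0(\overline{M_j})^G\cap C^\infty(\overline{M_j})$ and every $N$ there is a $G$-invariant $\phi\in C^\infty(\overline{M_j})$ vanishing to order $N$ on $X_j$ with $B_G\phi=h$.

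Next come the transformation law and the bootstrap. Since $F$ is $G$-invariant, $g\circ F$ is $G$-invariant for every $g$, and $F$ descends to the biholomorphism $\widetilde F\colon M_1/G\to M_2/G$; let $u_F$ be the lift to $M_1$ of the holomorphic reduced Jacobian $\det\widetilde F'$. Disintegrating the $L^2$ pairings along the orbit fibrations and changing variables by $\widetilde F$ yields an intertwining identity $B_G^{(1)}(u_F\cdot(g\circ F))=u_F\cdot((B_G^{(2)}g)\circ F)$ modulo a kernel smooth up to the boundary. The point that departs from the classical case is that the coordinates $z_k$ are not $G$-invariant, so $B_G^{(2)}z_k\neq z_k$; one therefore runs the argument with $G$-invariant generators $p_1,\dots,p_L\in\mathcal{O}(M_2)^G\cap C^\infty(\overline{M_2})$ that embed $M_2/G$, for which $B_G^{(2)}p_l=p_l$. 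Feeding Bell's lemma on $M_2$ into the transformation law gives $u_F\cdot(p_l\circ F)=B_G^{(1)}(u_F\cdot(\psi_l\circ F))$ with $\psi_l$ vanishing to high order on $X_2$; since $F$ maps $\partial M_1$ into $\partial M_2$, an a priori H\"older estimate at the boundary (as in \cite{BL80}) places $\psi_l\circ F$ in $W^m(M_1)$ with $m$ growing in the order of vanishing, so Condition~$R$ makes $u_F\cdot(p_l\circ F)$ smooth up to $\overline{M_1}$. As $\widetilde F$ is biholomorphic, $u_F$ is nonvanishing on the regular part, and dividing out $u_F$ (running the scheme also for $\widetilde F^{-1}$ to control its boundary behaviour) gives smoothness of $\widetilde F$ up to the boundary. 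Finally, near $\mu^{-1}(0)\cap X_1$ the free action furnishes smooth local slices up to the boundary, through which the holomorphic lift $F$ of the smooth $\widetilde F$ inherits smoothness up to $\overline{M_1}$, which is the assertion.

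The hard part is the transformation law together with this last lifting step. Unlike the biholomorphic case of \cite{BL80}, $F$ collapses the $G$-orbits, so the ordinary Jacobian $\det F'$ vanishes identically and cannot serve as the transformation factor; one must substitute the reduced Jacobian $\det\widetilde F'$ and prove the intertwining identity by disintegrating the inner products along the orbit fibration, carrying along the orbit-volume factor that already appears in \eqref{e-gue230327ycda}. The non-invariance of the $z_k$ means one only directly recovers the reduced map $\widetilde F$, so recovering $F$ itself relies on the boundary regularity of the local slices; this is precisely where the freeness of the action on $\mu^{-1}(0)\cap X$ and the localization afforded by the smoothing part of Theorem~\ref{t-510} are used.
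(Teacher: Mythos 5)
Your overall Bell--Ligocka strategy is the same one the paper follows, and your first step is literally the paper's Lemma~\ref{l-gue230331yyd} (each component $F_k=z_k\circ F$ is a bounded $G$-invariant holomorphic function, hence $F_k=B_{G,M_1}F_k$, and $\tau B_{G,M_1}$ is smoothing away from $\mu_1^{-1}(0)\cap X_1$). But your core mechanism --- a quotient transformation law with a ``reduced Jacobian'' plus Bell's density lemma --- has genuine gaps. First, $M_j/G$ carries no complex structure: generic orbits are real $d$-dimensional, so the quotient has real dimension $2n-d$ (odd whenever $d$ is odd), and since \eqref{e-gue230411yydc} is \emph{not} assumed in this theorem the quotient may moreover be singular (the paper's $\mathbb C^4$ example with $S^1$-action is exactly such a case). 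So ``the holomorphic reduced Jacobian $\det\widetilde F'$'', the lift $u_F$, and ``$G$-invariant generators embedding $M_2/G$'' are not defined objects; the hypothesis on $dF$ is a statement about the \emph{real} quotient's regular part. Second, even in the local transverse-holomorphic picture available near $\mu^{-1}(0)\cap X$ (where $\underline{\mathfrak g}_x\cap J\underline{\mathfrak g}_x=0$), your intertwining identity $B^{(1)}_G\bigl(u_F\cdot(g\circ F)\bigr)=u_F\cdot\bigl((B^{(2)}_Gg)\circ F\bigr)$ ``modulo smooth kernels'' does not follow from disintegration: the change of variables produces the weight $(V^{(2)}_{\rm eff}\circ F)/V^{(1)}_{\rm eff}$ (the factor visible in \eqref{e-gue230327ycda}) times Jacobian factors, which is smooth and positive but not of the form $\abs{h}^2$ with $h$ holomorphic. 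Bell's identity is precisely unitarity of the pullback; with only a bounded invertible intertwiner $T$, the operator $TB^{(2)}_GT^{-1}$ is a non-orthogonal projection with the same range as $B^{(1)}_G$, and the difference of two projections with common range is a bounded operator into holomorphic functions with no reason to be smoothing up to the boundary. You flag this as the hard part but supply no mechanism (e.g., a weighted-Bergman transformation rule), and it is the load-bearing step of your argument. Third, your last step is incomplete: since $F$ is invariant it factors through $M_1/G$ as a map \emph{into} $M_2$, so smoothness of the induced quotient-to-quotient map plus smooth slices only gives smoothness of $\pi_2\circ F$; the position of $F(z)$ within its target orbit is a $G$-valued gauge component that invariant data $p_l\circ F$ cannot see, and its boundary regularity is not addressed.

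For comparison, the paper avoids any transformation law and any density lemma. After the localization lemma it uses only: (i) $B_{G,M_1}(\cdot,w)\in C^\infty(\ol M_1)$ for interior $w$, from Theorem~\ref{t-510}; (ii) the expansion \eqref{e-gue230319ycdem} together with \cite[Theorem 1.10]{HM14} to produce $G$-invariant $f_0,\ldots,f_{n-d}\in H^0(\ol M_1)^G\cap C^\infty(\ol M_1)$ whose $(n-d+1)\times(n-d+1)$ matrix of values and $Z_j$-derivatives at $p$ is nondegenerate --- only the $n-d$ transverse fields $Z_j$ enter, because invariant holomorphic functions are annihilated by $\underline{\mathfrak g}$ and, by holomorphy, by $J\underline{\mathfrak g}$, which is also how the orbit directions of $F$ itself are disposed of; (iii) the perturbation argument of \cite[Part 3 of the proof of Lemma 1]{BL80} to replace the $f_\ell$ by kernel functions $B_{G,M_1}(\cdot,a_\ell)$ at interior points; and (iv) \cite[Lemma 2]{BL80} to conclude. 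Note also that your stated reason for detouring through invariant generators --- that $z_k$ is not $G$-invariant so $B^{(2)}_Gz_k\neq z_k$ --- misidentifies the obstruction: the relevant pullbacks $z_k\circ F=F_k$ \emph{are} invariant and lie in $H^0(\ol M_1)^G$, which is exactly what both you and the paper exploit in the localization step; the genuine difficulties are the ill-posed quotient Jacobian, the failure of unitarity in the would-be transformation law, and the gauge component in the lifting, none of which your sketch resolves.
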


In the end of this section, we give a simple example. 
Let 
\[M:=\set{(z_1,z_2,z_3,z_4)\in\mathbb C^4;\, \abs{z_1}^{4}
+\sum^4_{j=2}\abs{z_j}^2<1}.\]
$M$ admits a $S^1$-action: 
\[S^1\times M\to M,\ \ e^{i\theta}\cdot(z_1,\ldots,z_4)
=(e^{-i\theta}z_1,e^{i\theta}z_2,\ldots,e^{i\theta}z_4).\]
We can show that Assumption~\ref{a-gue2204300124} holds in this example (see~\cite[Section 2.5]{HMM} for the details). Moreover, it is straightforward to check that $0\in\mathbb C^4$ is a critical point of $\mu$ and hence \eqref{e-gue230411yydc} does not hold.  
In this example, we have Theorem~\ref{t-510} and Theorem~\ref{t-gue230411yyd}. Since $M_G$ has singularities, we do not know if we have Theorem~\ref{t-gue230328yydr}. It is quite interesting to see if we have Theorem~\ref{t-gue230328yydr} for singular reduction. 

Let us consider the shell domain
\[M:=\set{(z_1,z_2,z_3,z_4)\in\mathbb C^4;\, \frac{1}{2}<\abs{z_1}^{4}
+\sum^4_{j=2}\abs{z_j}^2<1}.\]
Then, Assumption~\ref{a-gue2204300124} and \eqref{e-gue230411yydc} hold in this example and we have Theorem~\ref{t-510}, Theorem~\ref{t-gue230411yyd} and Theorem~\ref{t-gue230328yydr} for this example. 

\section{Preliminaries}\label{s-gue2204301052}

\subsection{Some standard notations}\label{s-gue230316yyd}
We use the following notations: $\mathbb N=\set{1,2,\ldots}$,
$\mathbb N_0=\mathbb N\cup\set{0}$, $\Real$
is the set of real numbers, $\ol\Real_+:=\set{x\in\Real;\, x\geq0}$.
For a multiindex $\alpha=(\alpha_1,\ldots,\alpha_m)\in\mathbb N_0^m$,
we set $\abs{\alpha}=\alpha_1+\cdots+\alpha_m$. For $x=(x_1,\ldots,x_m)\in\Real^m$ we write
\[
\begin{split}
&x^\alpha=x_1^{\alpha_1}\ldots x^{\alpha_m}_m,\quad
 \pr_{x_j}=\frac{\pr}{\pr x_j}\,,\quad
\pr^\alpha_x=\pr^{\alpha_1}_{x_1}\ldots\pr^{\alpha_m}_{x_m}=\frac{\pr^{\abs{\alpha}}}{\pr x^\alpha}\,,\\
&D_{x_j}=\frac{1}{i}\pr_{x_j}\,,\quad D^\alpha_x=D^{\alpha_1}_{x_1}\ldots D^{\alpha_m}_{x_m}\,,
\quad D_x=\frac{1}{i}\pr_x\,.
\end{split}
\]
Let $z=(z_1,\ldots,z_m)$, $z_j=x_{2j-1}+ix_{2j}$, $j=1,\ldots,m$, be coordinates of $\Complex^m$,
where
$x=(x_1,\ldots,x_{2m})\in\Real^{2m}$ are coordinates in $\Real^{2m}$.
We write
\[
\begin{split}
&z^\alpha=z_1^{\alpha_1}\ldots z^{\alpha_m}_m\,,\quad\ol z^\alpha=\ol z_1^{\alpha_1}\ldots\ol z^{\alpha_m}_m\,,\\
&\pr_{z_j}=\frac{\pr}{\pr z_j}=
\frac{1}{2}\Big(\frac{\pr}{\pr x_{2j-1}}-i\frac{\pr}{\pr x_{2j}}\Big)\,,\quad\pr_{\ol z_j}=
\frac{\pr}{\pr\ol z_j}=\frac{1}{2}\Big(\frac{\pr}{\pr x_{2j-1}}+i\frac{\pr}{\pr x_{2j}}\Big),\\
&\pr^\alpha_z=\pr^{\alpha_1}_{z_1}\ldots\pr^{\alpha_m}_{z_m}=\frac{\pr^{\abs{\alpha}}}{\pr z^\alpha}\,,\quad
\pr^\alpha_{\ol z}=\pr^{\alpha_1}_{\ol z_1}\ldots\pr^{\alpha_m}_{\ol z_m}=
\frac{\pr^{\abs{\alpha}}}{\pr\ol z^\alpha}\,.
\end{split}
\]

Let $\Omega$ be a $C^\infty$ orientable paracompact manifold.
We let $T\Omega$ and $T^*\Omega$ denotes the tangent bundle of $\Omega$ and the cotangent bundle of $\Omega$ respectively.
The complexified tangent bundle of $\Omega$ and the complexified cotangent bundle of $\Omega$
will be denoted by $\Complex T\Omega$ and $\Complex T^*\Omega$ respectively. We write $\langle\,\cdot\,,\cdot\,\rangle$
to denote the pointwise duality between $T\Omega$ and $T^*\Omega$.
We extend $\langle\,\cdot\,,\cdot\,\rangle$ bilinearly to $\Complex T\Omega\times\Complex T^*\Omega$.

Let $E$ be a $C^\infty$ complex vector bundle over $\Omega$. The fiber of $E$ at $x\in\Omega$ will be denoted by $E_x$.
Let $F$ be another vector bundle over $\Omega$. We write
$F\boxtimes E^*$ to denote the vector bundle over $\Omega\times\Omega$ with fiber over $(x, y)\in\Omega\times\Omega$
consisting of the linear maps from $E_y$ to $F_x$.

Let $Y\subset \Omega$ be an open set. The spaces of smooth sections of $E$ over $Y$ and distribution sections of $E$ over $Y$ will be denoted by $C^\infty(Y, E)$ and $\mathscr D'(Y, E)$ respectively.
Let $\mathscr E'(Y, E)$ be the subspace of $\mathscr D'(Y, E)$ whose elements have compact support in $Y$ and set 
$C^\infty_c(Y,E):=C^\infty(Y,E)\bigcap\mathscr E'(Y, E)$. Fix a volume form on $Y$ and a Hermitian metric on $E$, we get a 
natural $L^2$ inner product $(\,\cdot\,|\,\cdot\,)$ on $C^\infty_c(Y,E)$. Let $L^2(Y,E)$ be the completion of $C^\infty_c(Y,E)$ with respect 
to $(\,\cdot\,|\,\cdot\,)$ and the $L^2$ inner product $(\,\cdot\,|\,\cdot\,)$ can be extended to $L^2(Y,E)$ by density. Let $\norm{\cdot}$ be the $L^2$ norm corresponding to the $L^2$ inner product $(\,\cdot\,|\,\cdot\,)$. 
For every $s\in\mathbb R$, let $L_s: \mathscr D'(Y,E)\To\mathscr D'(Y,E)$ be a properly supported classical 
elliptic pseudodifferential operator of order $s$ on $Y$ with values in $E$. Define 
\[W^s(Y,E):=\set{u\in \mathscr D'(Y,E);\, L_s u\in L^2(Y,E)}\]
and for $u\in W^s(Y,E)$, let $\norm{u}_s:=\norm{L_s u}$. 
We call $W^s(Y, E)$ the Sobolev space of order $s$ of sections of $E$ over $Y$ (with respect to $L_s$) and for $u\in W^s(Y,E)$, we call the number $\norm{u}_s$ the Sobolev norm of $u$ of order $s$ (with respect to $L_s$).
Put
\begin{gather*}
W^s_{\rm loc\,}(Y, E)=\big\{u\in\mathscr D'(Y, E);\, \varphi u\in W^s(Y, E),
     \,\forall\varphi\in C^\infty_c(Y)\big\}\,,\\
      W^s_{\rm comp\,}(Y, E)=W^s_{\rm loc}(Y, E)\cap\mathscr E'(Y, E)\,.
\end{gather*}

Let $U, V$ be open sets of $\Omega$. 
Let $F: C^\infty_c(V)\to\mathscr{D}'(U)$ be a continuous operator and let $F(x,y)\in\mathscr{D}'(U\times V)$ be the distribution kernel of $F$. In this work, we will identify $F$ with $F(x,y)$.
We say that
$F$ is a smoothing operator 
if $F(x,y)\in C^\infty(U\times V)$. 
Note that the following conditions are equivalent.
\begin{equation}
    \begin{split}
        &F(x,y)\in C^\infty(U\times V).\\\
        &F:\mathscr{E}'(V)\to \mathscr{C}^\infty(U)~\text{is continuous}.\\
        &F:W^{-s}_{\mathrm{comp}}(V)\to W^s_{\mathrm{loc}}(U)~\text{is continuous for all}~s\in\mathbb{N}_0.
    \end{split}
\end{equation}
For two continuous linear operators $A,B: C^\infty_c(V)\to\mathscr{D}'(U)$, we write $A\equiv B$ (on $U\times V$) or $A(x,y)\equiv B(x,y)$ (on $U\times V$) if $A-B$ is a smoothing operator, where $A(x,y), B(x,y)\in\mathscr{D}'(U\times V)$ are the distribution kernels of $A$ and $B$, respectively.



\subsection{Complex manifolds with boundary}\label{s-gue190320q}
Let $M$ be a relatively compact open subset with smooth boundary $X$ of a complex manifold $M'$ of dimension $n$, $n\geq3$. Let $\rho\in C^\infty(M',\Real)$ be a defining function of $X$, that is,
\[X=\{x\in M';\, \rho(x)=0\},\ \ M=\{x\in M';\, \rho(x)<0\}\]
and $d\rho(x)\neq0$ at every point $x\in X$.
Then the manifold $X$ is a CR manifold with natural CR structure $T^{1,0}X:=T^{1, 0}M'\cap \mathbb CTX$, where $T^{1,0}M'$ denotes the holomorphic tangent bundle of $M'$. Let  
$T^{0,1}M':=\overline{T^{1,0}M'}$, 
$T^{0, 1}X :=\overline{T^{1,0}X}$.



 Assume that $M'$ admits a holomorphic $d$-dimensional compact Lie group $G$ action. From now on, we will use the same assumptions and notations as in Section~\ref{s-gue190320}. Recall that we work with Assumption~\ref{a-gue2204300124}. We take a $G$-invariant Hermitian metric $\langle \cdot \,|\, \cdot \, \rangle$ on $\mathbb{C}TM'$. The $G$-invariant Hermitian metric 
 $\langle \cdot \,|\, \cdot \, \rangle$ on $\mathbb{C}TM'$ induces a 
 $G$-invariant Hermitian metric 
 $\langle \cdot \,|\, \cdot \, \rangle$ on $\mathbb{C}T^*M'$.
 
 From now on, we fix a defining function $\rho\in C^\infty(M',\Real)$ of $X$ such that
\begin{equation}\label{e-gue171006aI}
\begin{split}
&\mbox{$\langle\,d\rho(x)\,|\,d\rho(x)\,\rangle=1$ on $X$},\\
&\rho(g\circ x)=\rho(x),\ \ \forall x\in M',\ \ \forall g \in G.
\end{split}\end{equation}

Let $\frac{\partial}{\partial \rho} \in C^\infty(X, TM')$ be the global real vector field on $X$ given by
\begin{equation}
\begin{split}
&\Big\langle \frac{\partial}{\partial \rho}, d\rho \,\Big\rangle=1~\text{on}~X,\\
&\Big\langle \frac{\partial}{\partial \rho}(p)\, \big|\, v \, \Big\rangle = 0~\text{at every}~p \in X,~\text{for every}~v \in T_pX.
\end{split}
\end{equation}

Let $J : TM' \to TM'$ be the complex structure map and put
\begin{equation}
T = J\left(\frac{\partial}{\partial \rho}\, \right) \in C^\infty(X, TM'\,).
\end{equation}
The $G$-invariant Hermitian metric $\langle\,\cdot\,|\,\cdot\,\rangle$ on $\Complex TM'$ induces by duality a Hermitian metric on $\Complex T^*M'$ and Hermitian metrics on $T^{*0,q}M'$ the bundle of $(0,q)$ forms on $M'$, $q=1,\ldots,n$.
We shall also denote these Hermitian metrics by $\langle\,\cdot\,|\,\cdot\,\rangle$. Put
\[
T^{*1,0}X := (T^{0,1}X \oplus \mathbb{C}T)^\perp \subset \mathbb{C}
T^*X, \quad T^{*0,1}X := (T^{1,0}X \oplus \mathbb{C}T)^\perp \subset \mathbb{C}T^*X.
\]
Put
\begin{equation}\label{e-omega0}
\omega_0=J^t(d\rho),
\end{equation}
where $J^t$ is the complex structure map for the cotangent bundle $T^*M'$. Then, on $X$, $\omega_0 \in C^\infty(X, T^*X)$ is the global one form on $X$ satisfying
\begin{equation}
\begin{split}
& \mbox{ $\langle \omega_0(p), u \, \rangle =0$, for every $p \in X$ and every $u \in T_p^{1,0}X \oplus T_p^{0,1}X$},          \\
&  \mbox{ $\langle \omega_0, T\, \rangle =-1$ on $X$.}
\end{split}
\end{equation}

It is easy to see that under Assumption~\ref{a-gue2204300124}, the $G$-action preserves $\omega_0$. We have the pointwise orthogonal decompositions:
\begin{equation}
\begin{split}
&\mathbb{C}T^*X = T^{*1,0}X \oplus T^{*0,1}X \oplus \{ \lambda \omega_0; \lambda \in \mathbb{C} \, \}, \\
&\mathbb{C}TX =T^{1,0}X \oplus T^{0,1}X \oplus \{ \lambda T; \lambda \in \mathbb{C} \, \}.
\end{split}
\end{equation}
For $p \in X$, the Levi form of $X$ at $p$ is the Hermitian quadratic form on $T^{1,0}_pX$ given by
\begin{equation}\label{e-levi}
\mathcal{L}_p(U,\ol V)=-\frac{1}{2i}\langle\,d\omega_0(p)\,,\,U\wedge\ol V\,\rangle, \ \ \forall U, V\in T^{1,0}_pX.
\end{equation}
We can check that the Levi form on $X$ defined in  (\ref{e-levi})
is exactly
\begin{equation}\label{e-gue230327ycdm}
\mathcal{L}_p(U,\ol V)=\langle\partial\overline\partial\rho(p)\,,\,U\wedge\ol V\,\rangle,\ \ U, V\in T^{1,0}_pX.
\end{equation}

\begin{definition}\label{def-111217}
$M$ is called weakly (strongly) pseudoconvex at $x\in X$ if $\mathcal L_x$ is semi-positive (positive) definite on $T_x^{1, 0}X$. If $\mathcal L_x$ is semi-positive (positive) definite at every point of $X$, then $M$ is called a weakly (strongly) pseudoconvex  manifold.
\end{definition}

Let $A$ be a $C^\infty$ vector bundle over $M'$. 
Let $U$ be an open set in $M'$. Let 
\[
\begin{split}
&C^\infty(U\cap \ol M,A),\ \ \mathscr D'(U\cap \ol M,A),\ \ C^\infty_c(U\cap \ol M,A),\ \ 
\mathscr E'(U\cap \ol M,A),\\ 
W^s&(U\cap \ol M,A),\ \ W^s_{{\rm comp\,}}(U\cap \ol M,A),\ \ 
W^s_{{\rm loc\,}}(U\cap \ol M,A),
\end{split}
\]
(where $\ s\in\mathbb R$)
denote the spaces of restrictions to $U\cap\ol M$ of elements in 
\[
\begin{split}
C^\infty&(U\cap M',A),\ \ \mathscr D'(U\cap M',A),\ \ C^\infty(U\cap M',A),\ \ 
\mathscr E'(U\cap M',A),\\  
&W^s(M',A),\ \  W^s_{{\rm comp\,}}(M',A),\ \  
W^s_{{\rm loc\,}}(M',A),
\end{split}
\] 
respectively. Write 
\[
\begin{split}
&L^2(U\cap M,A)=L^2(U\cap\ol M,A):=W^0(U\cap \ol M,A),\\
&L^2_{{\rm comp\,}}(U\cap\ol M,A):=W^0_{{\rm comp\,}}(U\cap \ol M,A),\ \ 
L^2_{{\rm loc\,}}(U\cap\ol M,A):=W^0_{{\rm loc\,}}(U\cap \ol M,A).
\end{split}
\]
For every $q=0,\ldots,n$, we denote 
\[
\begin{split}
\Omega^{0,q}&(U\cap\ol M):=C^\infty(U\cap\ol M,T^{*0,q}M'),\ \ 
\Omega^{0,q}(M'):=C^\infty(M',T^{*0,q}M'),\\ 
&\Omega^{0,q}_c(U\cap\ol M):=C^\infty_c(U\cap\ol M,T^{*0,q}M'),\\
&\Omega^{0,q}_c(M'):=C^\infty_c(M',T^{*p,q}M'),\ \ 
\Omega^{0,q}_c(M):=C^\infty_c(M,T^{*p,q}M').
\end{split}
\]
Let $A$ and $B$ be $C^\infty$ vector bundles over $M'$. 
Let $U$ be an open set in $M'$. Let 
$$F_1, F_2: C^\infty_c(U\cap M,A)\To\mathscr D'(U\cap M,B)$$ 
be continuous operators. Let 
$F_1(x,y), F_2(x,y)\in\mathscr D'((U\times U)\cap(M\times M), A\boxtimes B^*)$ 
be the distribution kernels of $F_1$ and $F_2$ respectively. 
We write 
$$F_1\equiv F_2\!\!\mod C^\infty((U\times U)\cap(\ol M\times\ol M))$$ 
or $F_1(x,y)\equiv F_2(x,y)\!\!\mod C^\infty((U\times U)\cap(\ol M\times\ol M))$ 
if $F_1(x,y)=F_2(x,y)+r(x,y)$, where 
$r(x,y)\in C^\infty((U\times U)\cap(\ol M\times\ol M),A\boxtimes B^*)$. Similarly, 
let $\hat F_1, \hat F_2: C^\infty_c(U\cap M,A)\To\mathscr D'(U\cap X,B)$ be continuous operators. Let 
$\hat F_1(x,y), \hat F_2(x,y)\in\mathscr D'((U\times U)\cap(X\times M), A\boxtimes B^*)$ be the distribution kernels of $\hat F_1$ and $\hat F_2$ respectively. We write 
$\hat F_1\equiv\hat F_2\!\!\mod C^\infty((U\times U)\cap(X\times\ol M))$ or $\hat F_1(x,y)\equiv\hat F_2(x,y)\!\!\mod C^\infty((U\times U)\cap(X\times\ol M))$ if $\hat F_1(x,y)=\hat F_2(x,y)+\hat r(x,y)$, where 
$\hat r(x,y)\in C^\infty((U\times U)\cap(X\times\ol M),A\boxtimes B^*)$. Similarly, let $\tilde F_1, \tilde F_2: C^\infty_c(U\cap X,A)\To\mathscr D'(U\cap M,B)$ be continuous operators. Let 
\[\tilde F_1(x,y), \tilde F_2(x,y)\in\mathscr D'((U\times U)\cap(M\times X), A\boxtimes B^*)\] 
be the distribution kernels of $\tilde F_1$ and $\tilde F_2$ respectively. We write 
$\tilde F_1\equiv\tilde F_2\!\!\mod C^\infty((U\times U)\cap(\ol M\times X))$ or $\tilde F_1(x,y)\equiv\tilde F_2(x,y)\!\!\mod C^\infty((U\times U)\cap(\ol M\times X))$ if $\tilde F_1(x,y)=\tilde F_2(x,y)+\tilde r(x,y)$, where 
$\tilde r(x,y)\in C^\infty((U\times U)\cap(\ol M\times X),A\boxtimes B^*)$.

Let $(\,\cdot\,|\,\cdot\,)_{M'}$ and $(\,\cdot\,|\,\cdot\,)_{M}$ be the $L^2$ inner products on $\Omega^{0,q}_c(M')$ and $\Omega^{0,q}_c(M)$ respectively given by
\begin{equation}\label{e-gue171006aIb}
\begin{split}
&(\,u\,|\,v\,)_{M'}:=\int_{M'}\langle\,u\,|\,v\,\rangle dv_{M'},\ \ u, v\in\Omega^{0,q}_c(M'),\\
&(\,u\,|\,v\,)_{M}:=\int_M\langle\,u\,|\,v\,\rangle dv_{M'},\ \ u, v\in\Omega^{0,q}_c(M),
\end{split}
\end{equation}
where $dv_{M'}$ is the volume form on $M'$ induced by $\langle\,\cdot\,|\,\cdot\,\rangle$. Let $L^2_{(0,q)}(M)$ and $L^2_{(0,q)}(M')$ be the $L^2$ completions of $\Omega^{0,q}_c(M)$ and $\Omega^{0,q}_c(M')$ with respect to
$(\,\cdot\,|\,\cdot\,)_M$ and $(\,\cdot\,|\,\cdot\,)_{M'}$ respectively. It is clear that $\Omega^{0,q}(\ol M)\subset L^2_{(0,q)}(M)$. We write $L^2(M):=L^2_{(0,0)}(M)$. 
We extend $(\,\cdot\,|\,\cdot\,)_M$ and $(\,\cdot\,|\,\cdot\,)_{M'}$ to $L^2_{(0,q)}(M)$ 
and $L^2_{(0,q)}(M')$ in the standard way and let $\norm{\cdot}_{M}$ and $\norm{\cdot}_{M'}$ be the corresponding $L^2$ norms. Let $T^{*0,q}X$ be the bundle of $(0,q)$ forms on $X$. Recall that for every $x\in X$, we have
\[T^{*0,q}_xX:=\set{u\in T^{*0,q}_xM';\, \langle\,u\,|\,\ddbar\rho(x)\wedge g\,\rangle=0,\ \ \forall g\in T^{*0,q-1}_xM'}.\]
Let $\Omega^{0,q}(X)$ be the space of smooth $(0,q)$ forms on $X$.
Let $(\,\cdot\,|\,\cdot\,)_X$ be the $L^2$ inner product on $\Omega^{0,q}(X)$ given by
\begin{equation}\label{e-gue171006aIc}
(\,u\,|\,v\,)_X:=\int_X\langle\,u\,|\,v\,\rangle dv_X,
\end{equation}
where $dv_X$ is the volume form on $X$ induced by $\langle\,\cdot\,|\,\cdot\,\rangle$. Let $L^2_{(0,q)}(X)$ be the $L^2$ completion of $\Omega^{0,q}(X)$ with respect to
$(\,\cdot\,|\,\cdot\,)_X$. We extend $(\,\cdot\,|\,\cdot\,)_X$ to $L^2_{(0,q)}(X)$ in the standard way and let $\norm{\cdot}_X$ be the corresponding $L^2$ norm. We write $L^2(X):=L^2_{(0,0)}(X)$.

Fix $g\in G$. Let $g^*:\Lambda^r_x(\Complex T^*M')\To\Lambda^r_{g^{-1}\circ x}(\Complex T^*M')$ be the pull-back map. Since $G$ preserves $J$, we have
\[g^*:T^{*0,q}_xM'\To T^{*0,q}_{g^{-1}\circ x}M',\  \ \forall x\in M'.\]
Thus, for $u\in\Omega^{0,q}(M')$, we have $g^*u\in\Omega^{0,q}(M')$. Put
\[\Omega^{0,q}(M')^G:=\set{u\in\Omega^{0,q}(M');\, g^*u=u,\ \ \forall g\in G}.\]
Let $u\in L^2_{(0,q)}(M')$ and $g\in G$, we can also define $g^*u$ in the standard way.
Put
\[
L^2_{(0,q)}(M')^G:=\set{u\in L^2_{(0,q)}(M');\, g^*u=u,\ \ \forall g\in G}.
\]
Let $\Omega^{0,q}(\ol M)^G$ denote the space of restrictions to $M$ of elements in $\Omega^{0,q}(M')^G$. Let $L^2_{(0,q)}(M)^G$ be the completion of $\Omega^{0,q}(\ol M)^G$ with respect to $(\,\cdot\,|\,\cdot\,)_M$. Similarly, let
\begin{equation}\label{e-gue171006yc}
\Omega^{0,q}(X)^G:=\{u\in\Omega^{0,q}(X);\, g^*u=u, \ \ \forall g \in G\}.
\end{equation}
Let $L^2_{(0,q)}(X)^G$ be the completion of $\Omega^{0,q}(X)^G$ with respect to $(\,\cdot\,|\,\cdot\,)_X$. We write $L^2(X)^G:=L^2_{(0,0)}(X)^G$, $L^2(M)^G:=L^2_{(0,0)}(M)^G$, $L^2(M')^G:=L^2_{(0,0)}(M')^G$.

For $s\in\Real$, we also use $\norm{\cdot}_{s,X}$ to denote the standard Sobolev norm on $X$ of order $s$. Let $A$ be a vector bundle over $M'$. 
Let  $u\in W^s(\ol M,A)$. We define
\[\norm{u}_{s,\ol M}:=\inf\set{\norm{\Td u}_{s,M'};\, u'\in W^s(M',A), u'|_M=u}.\]
We call $\norm{u}_{s,\ol M}$ the Sobolev norm of $u$ of order $s$ on $\ol M$. 

Let $s$ be a non-negative integer. We can also define  Sobolev norm of order $s$ on $\ol M$ as follows: Let $x_0\in X$ and let $U$ be an open neighborhood of $x_0$ in $M'$ with local coordinates $x=(x_1,\ldots,x_{2n})$. 
Let $u\in\mathscr E'(U\cap\ol M)\bigcap W^s(\ol M,A)$. Let $\Td u\in \mathscr E'(U)\bigcap W^s(M',A)$ with $\Td u|_M=u$. We define the Sobolev norm of order $s$ of $u$ on $\ol M$ by
\begin{equation}\label{e-gue171007hyc}
\norm{u}^2_{(s),\ol M}:=\sum_{\alpha\in\mathbb N_0^{2n},\abs{\alpha}\leq s}\int_{M}\abs{\pr^\alpha_x\Td u}^2dv_{M'}.
\end{equation}
By using partition of unity, for $u\in W^s(\ol M,A)$, we define $\norm{u}^2_{(s),\ol M}$ in the standard way. As in function case, we define $\norm{u}^2_{(s),\ol M}$, for $u\in W^s(\ol M,A)$ in the similar way. 
It is well-known (see~\cite[Corollary B.2.6]{Hor85}) that the two norms $\norm{\cdot}_{s,\ol M}$ and $\norm{\cdot}_{(s),\ol M}$ are equivalent for every non-negative integer $s$.



\subsection{The reduction of complex manifolds with boundary}\label{s-gue2204301601}

As before, let $\mathfrak{g}$ denote the Lie algebra of $G$ and for any $\xi \in \mathfrak{g}$, we write $\xi_{M'}$ to denote the vector field on $M'$ induced by $\xi$ (see \eqref{e-gue230327ycd}). For $x \in M'$, recall that $\underline{\mathfrak{g}}_x$ is given by \eqref{e-gue2204290306}.



\begin{definition}\label{d-gue170124}
	The moment map associated to the form $\omega_0$ is the map $\mu:M' \to \mathfrak{g}^*$ defined by
	\begin{equation}\label{E:cmpm}
	\langle \mu(x), \xi \rangle = \omega_0(\xi_{M'}(x)), \qquad x \in M', \quad \xi \in \mathfrak{g}.
	\end{equation}
\end{definition}

The proof of the following lemma is standard, cf. for example, \cite[Theorem 6]{A}. 

\begin{lemma}\label{l-gue2204290819}
The moment map $\mu:M' \to \mathfrak{g}^*$ is $G$-equivariant, so $G$ acts on $Y':=\mu^{-1}(0)$, where $G$ acts on $\mathfrak{g}^*$ through co-adjoint representation.
\end{lemma}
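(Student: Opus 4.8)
The plan is to establish the equivariance relation $\mu(g \circ x) = \mathrm{Ad}^*_g\,\mu(x)$ for every $g \in G$ and $x \in M'$, where $\mathrm{Ad}^*$ denotes the coadjoint representation, characterized by $\langle \mathrm{Ad}^*_g \eta, \xi \rangle = \langle \eta, \mathrm{Ad}_{g^{-1}} \xi \rangle$ for $\eta \in \mathfrak{g}^*$ and $\xi \in \mathfrak{g}$. Once this is in hand, the assertion that $G$ preserves $Y' = \mu^{-1}(0)$ is immediate: if $\mu(x) = 0$ then $\mu(g \circ x) = \mathrm{Ad}^*_g 0 = 0$, so $g \circ x \in Y'$, and the co-adjoint action is the natural $G$-action on $\mathfrak{g}^*$ making $\mu$ equivariant.

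First I would record the infinitesimal transformation law for the fundamental vector fields. Writing $\Phi_g : M' \to M'$ for the diffeomorphism $x \mapsto g \circ x$, a direct computation from the definition \eqref{e-gue230327ycd}, using $g\exp(t\xi)g^{-1} = \exp(t\,\mathrm{Ad}_g\xi)$, yields $(\Phi_g)_* \xi_{M'} = (\mathrm{Ad}_g\xi)_{M'}$ as vector fields on $M'$. Evaluating at the point $x$ and replacing $\xi$ by $\mathrm{Ad}_{g^{-1}}\xi$, this reads
\[
(\Phi_g)_{*,x}\big((\mathrm{Ad}_{g^{-1}}\xi)_{M'}(x)\big) = \xi_{M'}(g\circ x).
\]

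Next I would invoke the $G$-invariance of $\omega_0$, which has already been observed under Assumption~\ref{a-gue2204300124} (the action preserves $\omega_0 = J^t(d\rho)$). Invariance means $\Phi_g^*\omega_0 = \omega_0$, that is, $(\omega_0)_{\Phi_g(x)}\big((\Phi_g)_{*,x} v\big) = (\omega_0)_x(v)$ for all $v \in T_xM'$. Combining this with the vector field identity above and the defining relation \eqref{E:cmpm}, I compute
\[
\langle \mu(g\circ x), \xi\rangle = (\omega_0)_{g\circ x}\big(\xi_{M'}(g\circ x)\big) = (\omega_0)_x\big((\mathrm{Ad}_{g^{-1}}\xi)_{M'}(x)\big) = \langle \mu(x), \mathrm{Ad}_{g^{-1}}\xi\rangle = \langle \mathrm{Ad}^*_g\mu(x), \xi\rangle.
\]
Since $\xi \in \mathfrak{g}$ is arbitrary, the equivariance $\mu(g\circ x) = \mathrm{Ad}^*_g\mu(x)$ follows.

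The argument is routine and there is no genuine analytic obstacle, since $G$ is compact and acts smoothly and the invariance of $\omega_0$ is already available. The only step requiring care is the bookkeeping of conventions: one must match the direction of the adjoint in $(\Phi_g)_*\xi_{M'} = (\mathrm{Ad}_g\xi)_{M'}$ with the chosen normalization of $\mathrm{Ad}^*_g$, so that the two occurrences of $g^{-1}$ cancel correctly and the result is a genuine (left) $G$-equivariance rather than an anti-equivariance. I would therefore fix these conventions explicitly at the outset and verify that $g \mapsto \mathrm{Ad}^*_g$ is a left action compatible with the left action of $G$ on $M'$.
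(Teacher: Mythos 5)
Your proof is correct and takes essentially the same route as the paper's: both derive $\xi_{M'}(g\circ x)=(\Phi_g)_{*,x}\bigl((\mathrm{Ad}_{g^{-1}}\xi)_{M'}(x)\bigr)$ from the conjugation identity $g\exp(t\xi)g^{-1}=\exp(t\,\mathrm{Ad}_{g}\xi)$, then apply the $G$-invariance of $\omega_0$ and the defining relation \eqref{E:cmpm} to conclude $\langle\mu(g\circ x),\xi\rangle=\langle\mu(x),\mathrm{Ad}_{g^{-1}}\xi\rangle=\langle\mathrm{Ad}(g)^*\mu(x),\xi\rangle$. Your added care about matching the adjoint/coadjoint conventions, and your explicit remark that equivariance immediately gives $G\cdot Y'=Y'$, are harmless refinements of the same argument.
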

\begin{proof}
For all $g \in G$, $\xi \in \mathfrak{g}$ and $x \in M'$, we have
\begin{eqnarray}\label{e-que04062019}
\xi_{M'}(g \circ x) & = & \frac{d}{d t}\left(\exp(t\xi)\circ g \circ x\right)|_{t=0}    \nonumber \\
&  = & \frac{d}{d t}\left(g \circ g^{-1} \circ \exp(t\xi) \circ g\circ x\right)|_{t=0}  \\
& = &  g_* \left(\operatorname{Ad} (g^{-1})\circ \xi \right)_{M'} (x) \nonumber
\end{eqnarray}
and hence
\begin{eqnarray*}
\langle \mu(g \circ x), \xi \rangle & = &\omega_0(\xi_{M'}(g \circ x))  \qquad \text{by} \  \eqref{E:cmpm}  \\
& = & \omega_0 \left( g_* \left(\operatorname{Ad} (g^{-1}) \circ \xi\right)_{M'}(x)\right) \qquad \text{by} \ \eqref{e-que04062019}  \\
& = & \omega_0 \left( \left(\operatorname{Ad}(g^{-1})\circ \xi \right)_{M'} (x) \right) \qquad \text{by $G$-invariance of $\omega$}  \\
& = & \langle \mu(x), \operatorname{Ad}(g^{-1}) \circ \xi \rangle   \qquad \text{by} \  \eqref{E:cmpm}   \\
& = &  \langle \operatorname{Ad}(g)^*\mu(x), \xi\rangle.    \nonumber
 \end{eqnarray*}
 Thus, the moment map $\mu$ is $G$-equivariant.
\end{proof}

Note that $\mu_X=\mu|_X$ is the CR moment map associated to $\omega_0$ on $X$, cf. \cite{HH, HMM}. Suppose that $\mu_X^{\, -1}(0) \not= \emptyset$, then it is shown, in \cite[Lemma 2.5]{HMM}, that if $G$ acts freely on $\mu_X^{\, -1}(0)$ and the Levi form is positive on $\mu_X^{\, -1}(0)$, then $0$ is a regular value of $\mu_X$.


Set $Y'_G:=\mu^{-1}(0)/G$. In this section, we assume that \eqref{e-gue230411yydc} holds. $\mu^{-1}(0)$ is a smooth manifold. Since $G$ acts freely on $Y'$, $Y'_G$ is a smooth manifold. Let
\[
g^{TM'} = d\omega_0(\cdot, J\cdot).
\]
Then, $g^{TM'}$ is a non-degenerate quadratic form on $TM'$ near $\mu^{-1}(0)$. 
Let $T^HY'$ be the orthogonal complement of $\underline{\mathfrak{g}}_{Y'}$ in $TY'$ with respect to $g^{TM'}$, where $\underline{\mathfrak{g}}_{Y'}:=\underline{\mathfrak{g}}|_{Y'}$. Then we have
\begin{equation}\label{e-gue2204292011}
TY' = T^HY' \oplus \underline{\mathfrak{g}}_{Y'}.
\end{equation}

\begin{lemma}\label{l-gue2204290820}
We have
\[
JT^HY' = T^HY' =JTY' \cap TY'.
\]
\end{lemma}
\begin{proof}
Since $G$ acts freely on $Y'$, then vector spaces $\underline{\mathfrak{g}}_x$ defined in \eqref{e-gue2204290306} form a vector bundle $\underline{\mathfrak{g}}$ near $\mu^{-1}(0)$. 

For $x \in Y'$, by \eqref{e-gue230411yydc} and the fact that $d\omega_0(\cdot, J\cdot\,)$ is non-degenerate on $T_xM'$, we have that $d\mu|_{TY'} =0$ and $d\mu|_{J\underline{\mathfrak{g}}_x} \to \mathfrak{g}^*$ is surjective. Since $\dim Y' + \dim \underline{\mathfrak{g}} = \dim TM'$, we have
\begin{equation}\label{e-gue230316yyd}
J\underline{\mathfrak{g}}|_{Y'} \oplus TY' = TM'|_{Y'}.
\end{equation}

 By \eqref{e-gue2204292011} and \eqref{e-gue230316yyd}, we have the $G$-equivariant orthogonal decomposition on $Y'$,
\begin{equation}\label{e-gue2204290500}
TM'|_{Y'} = \underline{\mathfrak{g}}|_{Y'}\oplus J\underline{\mathfrak{g}}|_{Y'}\oplus T^HY'.
\end{equation}
Thus from \eqref{e-gue2204290500} and $g^{TM'}$ on $TM'|_{Y'}$ is $J$-invariant, we get
\begin{equation}
JT^HY' = T^HY' =JTY' \cap TY'.
\end{equation}
\end{proof}

Let $\pi: Y' \to Y'_G$ and $\iota:Y' \hookrightarrow M'$ be the natural quotient and inclusion, respectively, then there is a unique induced 1-form $\widetilde{\omega}_0$ on $Y'_G$ such $\pi^*\widetilde{\omega}_0 = \iota^* \omega_0$.

Since $T^HY'$ is preserved by $J$, we can define the homomorphism $J_G$ on $TY'_G$ in the following way:
For $V \in TY'_G$, we denote by $V^H$ its lift in $T^HY'$, and we define $J_G$ on $Y'_G$ by
\begin{equation}\label{E:jg}
(J_GV)^H = J(V^H).
\end{equation}
Hence, we have $J_G: TY'_G \to TY'_G$ such that $J_G^2 = -\operatorname{id}$, where $\operatorname{id}$ denotes the identity map $\operatorname{id}  \, : \, TY'_G \to TY'_G.$ By complex linear extension of $J_G$ to $\mathbb{C}TY'_G$, the $\sqrt{-1}$-eigenspace of $J_G$ is given by $T^{1,0}Y'_G \, = \, \left\{ V \in \mathbb{C}TY'_G \,;\, J_GV \, =  \,  \sqrt{-1} V  \right\}.$

\begin{lemma}\label{t-gue170128b}
The almost complex structure $J_G$ is integrable, thus $(Y'_G, J_G)$ is a complex manifold.
\end{lemma}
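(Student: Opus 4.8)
The plan is to establish integrability via the Newlander--Nirenberg theorem, that is, to show that the $\sqrt{-1}$-eigenbundle $T^{1,0}Y'_G$ is involutive under the Lie bracket. The bridge between the intrinsic geometry of $Y'_G$ and the ambient integrable structure $J$ on $M'$ will be the horizontal lift associated with the $G$-equivariant decomposition $TY'=T^HY'\oplus\underline{\mathfrak{g}}_{Y'}$ of \eqref{e-gue2204292011}. The guiding idea is to keep every computation inside the ambient integrable bundle $T^{1,0}M'$ and to descend only at the very end.

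First I would record the pointwise identity
\[
\mathbb{C}TY'\cap T^{1,0}M'|_{Y'}=\{V\in\mathbb{C}T^HY';\, JV=\sqrt{-1}V\}=:(T^HY')^{1,0}.
\]
Indeed, a vector $Z\in\mathbb{C}TY'$ lies in $T^{1,0}M'$ exactly when $Z=U-\sqrt{-1}JU$ with $U\in TY'$ real and $JU\in TY'$, i.e. $U\in TY'\cap JTY'$, which equals $T^HY'$ by Lemma~\ref{l-gue2204290820}. Next, unwinding the definition \eqref{E:jg} of $J_G$, the horizontal lift $\tilde V\in C^\infty(Y',\mathbb{C}T^HY')$ of a section $V$ of $T^{1,0}Y'_G$ satisfies $J\tilde V=\sqrt{-1}\tilde V$, so $\tilde V$ is a $G$-invariant section of $(T^HY')^{1,0}\subset T^{1,0}M'|_{Y'}$ that is tangent to $Y'$; conversely, every $G$-invariant section of $(T^HY')^{1,0}$ descends to a section of $T^{1,0}Y'_G$. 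This identifies sections of $T^{1,0}Y'_G$ with $G$-invariant sections of $(T^HY')^{1,0}$.

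Given two sections $V,W$ of $T^{1,0}Y'_G$ with horizontal lifts $\tilde V,\tilde W$, I would extend $\tilde V,\tilde W$ to sections $\hat V,\hat W$ of $T^{1,0}M'$ on a neighborhood of $Y'$ in $M'$ (possible since $(T^HY')^{1,0}$ is a subbundle of $T^{1,0}M'|_{Y'}$ over the closed submanifold $Y'$). Because $J$ is integrable, $T^{1,0}M'$ is involutive, so $[\hat V,\hat W]$ is again a section of $T^{1,0}M'$. Since $\tilde V,\tilde W$ are tangent to $Y'$ along $Y'$, the restriction $[\hat V,\hat W]|_{Y'}$ equals the intrinsic bracket $[\tilde V,\tilde W]$ and is tangent to $Y'$; hence $[\tilde V,\tilde W]\in\mathbb{C}TY'\cap T^{1,0}M'|_{Y'}=(T^HY')^{1,0}$ by the identity above. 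Moreover $[\tilde V,\tilde W]$ is $G$-invariant (a bracket of $G$-invariant fields) and, being horizontal, is $\pi$-related to $[V,W]$; therefore $[V,W]$ is a section of $T^{1,0}Y'_G$. This proves involutivity, and the Newlander--Nirenberg theorem then yields integrability of $J_G$, so $(Y'_G,J_G)$ is a complex manifold.

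The main obstacle I anticipate is controlling the \emph{type} of $[\tilde V,\tilde W]$: a priori the bracket of two horizontal $(1,0)$-lifts could acquire a vertical component along $\underline{\mathfrak{g}}_{Y'}$ or a $(0,1)$-component. The argument resolves this by never projecting prematurely, keeping the computation inside the integrable ambient bundle $T^{1,0}M'$ and using that the lifts are simultaneously tangent to $Y'$ and of type $(1,0)$; Lemma~\ref{l-gue2204290820} is then invoked only at the end to convert ``tangent to $Y'$ and of type $(1,0)$'' into ``horizontal and of type $(1,0)$''. A secondary point to handle carefully is the $G$-equivariance of the horizontal lift, which follows because the metric $g^{TM'}$ and the bundle $\underline{\mathfrak{g}}$ are $G$-invariant, so that the decomposition \eqref{e-gue2204290500} and hence the lift commute with the $G$-action.
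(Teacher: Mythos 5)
Your proof is correct and takes essentially the same route as the paper's: both identify sections of $T^{1,0}Y'_G$ with horizontal lifts that are tangent to $Y'$ and of type $(1,0)$ in $T^{1,0}M'$, use integrability of $T^{1,0}M'$ to see the bracket is again of type $(1,0)$ and tangent to $Y'$, invoke Lemma~\ref{l-gue2204290820} (i.e.\ $TY'\cap JTY'=T^HY'$) to conclude horizontality, and push down by $\pi_*$. Your additional remarks on extending the lifts to a neighborhood of $Y'$, on $\pi$-relatedness of the brackets, and on $G$-equivariance of the lift merely make explicit steps the paper leaves implicit.
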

\begin{proof}
Let $u, v \in C^\infty(Y'_G, T^{1,0}Y'_G)$, then we can find $U, V \in C^\infty(Y'_G, TY'_G)$ such that
\[
u= U - \sqrt{-1}J_GU, \qquad v=V-\sqrt{-1}J_GV.
\]
By \eqref{E:jg}, we have
\[
u^H=U^H-\sqrt{-1}JU^H, \quad v = V^H - \sqrt{-1}JV^H \in T^{1,0}X \cap \mathbb{C}TY'.
\]
Since $T^{1,0}M'$ is integrable and it is clearly that $[u^H, v^H] \in \mathbb{C}TY',$ we have $[u^H, v^H] \in T^{1,0}M' \cap \mathbb{C}TY'.$ Hence, there is a $W \in C^\infty(M', TM')$ such that
\[
[u^H, v^H] = W-\sqrt{-1}JW.
\]
In particular, $W, JW \in TY'$. Thus, $W \in TY' \cap JTY' = T^HY'$. Let $X^H \in T^HY'$ be a lift of $X \in TY'_G$ such that $X^H=W$. Then we have
\[
[u, v] = \pi_*[u^H, v^H] = \pi_*(X^H -\sqrt{-1}JX^H) = X - \sqrt{-1}J_GX \in T^{1,0}Y'_G,
\]
i.e. we have $[C^\infty(Y'_G, T^{1,0}Y'_G), C^\infty(Y'_G, T^{1,0}Y'_G)] \subset C^\infty(Y'_G, T^{1,0}Y'_G).$ Therefore, $J_G$ is integrable.
\end{proof}

Let $M'_G:=\mu^{-1}(0)/G$, $M_G:=(\mu^{-1}(0)\cap M)/G$, $X_G:=(\mu^{-1}(0)\cap X)/G$. 
By combining Lemma~\ref{l-gue2204290819} and Lemma~\ref{t-gue170128b}, we have the following 

\begin{theorem}\label{t-gue230328yyd}
Under \eqref{e-gue230411yydc}, $M'_G$ is a complex manifold of dimension $n-d$ and $M_G \subset M'_G$ is a relatively compact open subset of $M'_G$ with smooth boundary $X_G$. In particular, the Levi form of $X_G$ is negative or positive. 
\end{theorem}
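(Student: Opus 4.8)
The plan is to assemble the statement from the two lemmas just proved together with a transversality argument for the boundary. First I would record that, by \eqref{e-gue230411yydc}, $0$ is a regular value of $\mu$, so $\mu^{-1}(0)$ is a smooth submanifold of $M'$ of real codimension $d$; since $G$ is compact and acts freely on $\mu^{-1}(0)$, the quotient $M'_G=\mu^{-1}(0)/G$ is a smooth manifold of real dimension $(2n-d)-d=2(n-d)$. Lemma~\ref{t-gue170128b} endows $M'_G$ with the integrable almost complex structure $J_G$, so $(M'_G,J_G)$ is a complex manifold, necessarily of complex dimension $n-d$. This settles the first assertion.

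For the boundary, the key point is that $\mu^{-1}(0)$ meets $X$ transversally. This follows from Assumption~\ref{a-gue2204300124}: since $0$ is a regular value of $\mu_X=\mu|_X$, the differential $d\mu_x$ is already surjective on $T_xX$ at each $x\in\mu^{-1}(0)\cap X$, whence $T_x\mu^{-1}(0)+T_xX=T_xM'$. Because $\rho$ cuts out $X$ with $d\rho\neq0$ and $d\rho|_{T_xX}=0$, transversality forces $d\rho$ to be nonzero on $T_x\mu^{-1}(0)$; thus $\iota^*\rho$ is a defining function of $\mu^{-1}(0)\cap X$ inside $\mu^{-1}(0)$, where $\iota:\mu^{-1}(0)\hookrightarrow M'$. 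Since $\rho$ is $G$-invariant we have $\xi_{M'}\rho=0$ for all $\xi\in\mathfrak{g}$, so $d(\iota^*\rho)$ annihilates the vertical directions $\underline{\mathfrak{g}}_x\subset T_x\mu^{-1}(0)$ and therefore descends to a nonvanishing differential on $M'_G$. Hence $\iota^*\rho$ pushes down to a defining function $\widetilde\rho$ of $X_G$ in $M'_G$, giving $M_G=\{\widetilde\rho<0\}$ and $X_G=\{\widetilde\rho=0\}$ with smooth boundary; relative compactness of $M_G$ follows from that of $M$, since $\mu^{-1}(0)\cap\ol M$ is compact and its image under the quotient map is $\ol{M_G}$.

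It remains to compute the Levi form of $X_G$. Here I would use the reduced one-form $\widetilde\omega_0$ on $M'_G$ characterized by $\pi^*\widetilde\omega_0=\iota^*\omega_0$, where $\pi:\mu^{-1}(0)\to M'_G$ is the projection; differentiating gives $\pi^*d\widetilde\omega_0=\iota^*d\omega_0$. For $U,V\in T^{1,0}_{\pi(x)}X_G$ I would take horizontal lifts $U^H,V^H\in T^{1,0}_xX\cap\mathbb{C}T(\mu^{-1}(0))$, as in Lemma~\ref{t-gue170128b} but adapted to the boundary via the fact that $d\widetilde\rho(V)=0$ iff $d(\iota^*\rho)(V^H)=0$, and read off
\[
\mathcal{L}^{X_G}_{\pi(x)}(U,\ol V)=\mathcal{L}^{X}_{x}(U^H,\ol{V^H}).
\]
Thus the Levi form of $X_G$ is the restriction of the Levi form of $X$ to the horizontal subspace $T^{1,0}X\cap\mathbb{C}T(\mu^{-1}(0))$. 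Since Assumption~\ref{a-gue2204300124} makes $\mathcal{L}^{X}$ positive (respectively negative) definite near $\mu^{-1}(0)\cap X$, its restriction to any subspace is again positive (respectively negative) definite; hence $\mathcal{L}^{X_G}$ is positive on $\widehat X_G$ and negative on $\widetilde X_G$, and in particular non-degenerate.

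I expect the last step to be the main obstacle: one must check carefully that the horizontal lift identifies $T^{1,0}X_G$ with $T^{1,0}X\cap\mathbb{C}T(\mu^{-1}(0))$ over $\mu^{-1}(0)\cap X$, and that the reduced structures are compatible in the sense that $\widetilde\omega_0$ restricted to $X_G$ agrees with $J_G^{\,t}(d\widetilde\rho)$, so that the displayed identity of Levi forms is legitimate. This is the boundary analogue of the CR reduction of \cite[Theorem 2.6]{HMM}, and the bookkeeping relating the ambient reduction of $(M',J,\rho)$ to the reduction of the boundary CR structure on $X$ is where the real work lies.
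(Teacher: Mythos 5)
Your proposal is correct and takes essentially the same route as the paper, which offers no detailed proof at all: it simply states the theorem as following "by combining Lemma~\ref{l-gue2204290819} and Lemma~\ref{t-gue170128b}". Your transversality argument (surjectivity of $d\mu_x|_{T_xX}$ giving $T_x\mu^{-1}(0)+T_xX=T_xM'$, hence the descent of $\rho$ to a defining function $\widetilde\rho$ of $X_G$) and the horizontal-lift identity $\mathcal{L}^{X_G}_{\pi(x)}(U,\ol V)=\mathcal{L}^{X}_{x}(U^H,\ol{V^H})$, together with the compatibility $\widetilde\omega_0=J_G^t(d\widetilde\rho)$, are precisely the details the paper leaves implicit, the last step being the boundary analogue of the CR reduction of \cite[Theorem 2.6]{HMM} that the paper relies on.
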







\section{$G$-invariant $\overline\partial$-Neumann problem}\label{s-gue171006}

In this section, we will study $G$-invariant $\overline\partial$-Neumann problem on $M$.
Until further notice, we fix $q\in\set{0,1,\ldots,n-1}$. Let $\ddbar:\Omega^{0,q}(\ol M)\To\Omega^{0,q+1}(\ol M)$ be the Cauchy-Riemann operator.  
We extend $\ddbar$ to $L^2_{(0,q)}(M)$:
\[
\ddbar:{\rm Dom\,}\ddbar \subset L^{2}_{(0,q)}(M)\rightarrow L^{2}_{(0,q+1)}(M),
\]
where $u\in{\rm Dom\,}\ddbar$ if we can find $u_j\in\Omega^{0,q}(\ol M)$, $j=1,2,\ldots$, 
such that $u_j\To u$ in $L^2_{(0,q)}(M)$ as $j\To+\infty$ and there is a $v\in L^2_{(0,q+1)}(M)$ such that $\ddbar u_j\To v$ as $j\To+\infty$. We set $\ddbar u:=v$. 
Let
\[
\ddbar^\ast:{\rm Dom\,}\ddbar^\ast\subset L^2_{(0,q+1)}(M)\To L^2_{(0,q)}(M)
\]
be the Hilbert adjoint of $\ddbar$ with respect to $(\,\cdot\,|\,\cdot\,)_M$. The Gaffney extension of the $\ddbar$-Neumann Laplacian is given by
\begin{equation}\label{e-gue171006g}
\Box^{(q)}: {\rm Dom\,}\Box^{(q)}\subset L^2_{(0,q)}(M)\To L^2_{(0,q)}(M),
\end{equation}
where ${\rm Dom\,}\Box^{(q)}:=\{u\in L^{2}_{(0,q)}(M);\, u\in{\rm Dom\,}\overline\partial\cap{\rm Dom\,}\overline\partial^\ast, \ddbar u\in{\rm Dom\,}\ddbar^{\ast}, \ddbar^{\ast}u\in{\rm Dom\,}\ddbar \}$ and $\Box^{(q)}u=(\ddbar\,\ddbar^{\ast}+\ddbar^\ast\,\ddbar)u$, $u\in{\rm Dom\,}\Box^{(q)}$.
Put
\[
{\rm Ker\,}\Box^{(q)}=\set{u\in{\rm Dom\,}\Box^{(q)};\, \Box^{(q)}u=0}.
\]
It is easy to check that
\begin{equation}\label{e-gue171006ycb}
{\rm Ker\,}\Box^{(q)}=\{u\in{\rm Dom\,}\Box^{(q)};\, \ddbar u=0, \ddbar^{\ast} u=0\}.
\end{equation}

Since $G$ preserves $J$ and $(\,\cdot\,|\,\cdot\,)$ is $G$-invariant, it is straightforward to see that
\begin{equation}\label{e-gue161231II}
\begin{split}
&g^*\ddbar=\ddbar g^*\ \ \mbox{on ${\rm Dom\,}\ddbar$},\\
&g^*\ol{\pr}^*=\ol{\pr}^*g^*\ \ \mbox{on ${\rm Dom\,}\ol{\pr}^*$},\\
&g^*\Box^{(q)}=\Box^{(q)}g^*\ \ \mbox{on ${\rm Dom\,}\Box^{(q)}$}.
\end{split}\end{equation}
Put $({\rm Ker\,}\Box^{(q)})^G:={\rm Ker\,}\Box^{(q)}\bigcap L^2_{(0,q)}(M)^G$.

Let $\ddbar\rho^{\wedge}: T^{*0,q}M'\To T^{*0,q+1}M'$ be the operator with wedge multiplication by $\ddbar\rho$ and
let $\ddbar\rho^{\wedge,\ast}:T^{*0,q+1}M'\To T^{*0,q}M'$ be its adjoint with respect to $\langle\,\cdot\,|\,\cdot\,\rangle$, that is,
\begin{equation}\label{e-gue171006e}
\langle\,\ddbar\rho\wedge u\,|\,v\,\rangle=\langle\,u\,|\,\ddbar\rho^{\wedge,\ast} v\,\rangle,\ \ u\in T^{*0,q}M',\ \ v\in T^{*0,q+1}M'.
\end{equation}
Denote by $\gamma$ the operator of restriction on $X$. By using the calculation in page 13 of~\cite{FK72}, we can check that
\begin{equation}\label{Neumann condition}
\begin{split}
&{\rm Dom\,}\ddbar^{\ast}\cap\Omega^{0,q+1}(\ol M)=\{u\in\Omega^{0,q+1}(\ol M);\, \gamma \ddbar\rho^{\wedge,\ast}u=0 \},\\
&{\rm Dom\,}\Box^{(q)}\cap \Omega^{0,q}(\ol M)=\{u\in \Omega^{0,q}(\ol M);\,
\gamma\ddbar\rho^{\wedge,\ast}u=0,  \gamma\ddbar\rho^{\wedge,\ast}\ddbar u=0 \}.
\end{split}
\end{equation}
Let $\ddbar^\ast_f:\Omega^{0,q+1}(M')\To\Omega^{0,q}(M')$ be the formal adjoint of $\ddbar$ with respect to $(\,\cdot\,|\,\cdot\,)_{M'}$, that is,
\[(\,\ddbar u\,|\,v\,)_{M'}=(\,u\,|\,\ddbar^\ast_fv\,)_{M'},\ \ \forall u\in\Omega^{0,q}_c(M'),\ \ \forall v\in\Omega^{0,q+1}(M').\]
It is easy to see that if $u\in{\rm Dom\,}\ddbar^{\ast}\cap\Omega^{0,q+1}(\ol M)$, then
$\ddbar^{\ast}u=\ddbar^\ast_fu.$ Write $\Box^{(q)}_f=\overline\partial\,\overline\partial_f^\ast+\overline\partial_f^\ast\overline\partial$. Recall that we work with Assumption~\ref{a-gue2204300124}.

Let $\{\omega^j\}_{j=1}^n$ be an orthonormal basis of $T^{\star 1,0}M'$ in a neighborhood of $X$ with $\omega^n=\frac{\pr\rho}{\abs{\pr\rho}}$. Let $\{L_j\}_{j=1}^n$ be a dual frame of $\{\omega^j\}_{j=1}^n$ with respect to $\langle\cdot|\cdot\rangle$. It is straightforward to check that 
\begin{equation}\label{022}
T=\frac{i}{\sqrt{2}}(L_n-\bar L_n).
\end{equation}

Denote by $\bar\pr_G$ the operator $\bar\pr$ restricted on $L^2_{(0,q)}(M')^G$. As $\Box^{(q)}$, 
we can define the $G$-invariant $\bar\pr$-Laplacian:
\begin{equation}\label{e-gue230316yyda}
\Box^{(q)}_G:=\bar\pr_G^{\ast}\bar\pr_G+\bar\pr_G\bar\pr_G^{\ast}:
{\rm Dom\,}\Box^{(q)}_G\subset L^2_{(0,q)}(M)^G\rightarrow L^2_{(0,q)}(M)^G
\end{equation}
in the similar way, where $\bar\pr_G^{\ast}: {\rm Dom\,}\bar\pr_G^{\ast}\subset L^2_{(0,q+1)}(M)^G\To L^2_{(0,q)}(M)^G$ is the Hilbert space adjoint of $\bar\pr_G: {\rm Dom\,}\bar\pr_G\subset L^2_{(0,q)}(M)^G\To L^2_{(0,q+1)}(M)^G$.

\begin{lemma}\label{t-021}
Fix $q=1,\ldots,n-2$. We have 
\begin{equation}\label{e-gue230317yyd}
\|u\|_{1,\ol M}\leq C\Bigr(\|\Box_G^{(q)}u\|_{M}+\|u\|_{M}\Bigr), \forall u\in {\rm Dom}\Box_G^{(q)}\cap\Omega_G^{0, q}(\overline M),
\end{equation}
where $C>0$ is a constant. 
\end{lemma}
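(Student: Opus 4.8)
The plan is to establish \eqref{e-gue230317yyd} by the standard program for the $\ddbar$-Neumann problem — the Kohn--Morrey--H\"ormander basic identity together with a commutator/microlocal analysis near the boundary — but carried out on the subspace of $G$-invariant forms, so that the failure of pseudoconvexity of $M$ away from $\mu^{-1}(0)\cap X$ is compensated by the group action. First I would fix a $G$-invariant partition of unity subordinate to a finite cover of $\ol M$ and reduce \eqref{e-gue230317yyd} to local estimates; this reduction is legitimate because multiplication by a $G$-invariant cut-off preserves $G$-invariance and, since $\ddbar\rho^{\wedge,\ast}(\chi u)=\chi\,\ddbar\rho^{\wedge,\ast}u$, also preserves the Neumann boundary conditions \eqref{Neumann condition}, while the commutators of the cut-offs with $\Box_G^{(q)}$ are lower order and will be absorbed at the end. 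On pieces supported in the interior of $M$ the operator $\Box_G^{(q)}$ has the principal symbol of a scalar Laplacian, so the usual interior elliptic estimate already gives $\|u\|_{1}\lesssim\|\Box_G^{(q)}u\|_M+\|u\|_M$, and the whole problem is concentrated near $X$.

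For a boundary piece near $p\in X$ I would use the Kohn--Morrey--H\"ormander integration-by-parts identity. Schematically, for $u\in{\rm Dom\,}\ddbar\cap{\rm Dom\,}\ddbar^\ast$ one obtains
\[
\|\ddbar u\|_M^2+\|\ddbar^\ast u\|_M^2=\sum_{j}\|\bar L_j u\|_M^2+\int_X\mathcal{Q}_\rho(u,u)\,dv_X+(\text{lower order}),
\]
where $\sum_j\|\bar L_j u\|_M^2$ denotes the antiholomorphic part of the $L^2$ gradient, $\mathcal{Q}_\rho$ is the boundary Hermitian form governed by the Levi form $\mathcal{L}$, and the Neumann conditions \eqref{Neumann condition} are what allow the boundary contribution of $\ddbar^\ast$ to be discarded. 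Since for $G$-invariant $u$ we have $\|\ddbar u\|_M^2+\|\ddbar^\ast u\|_M^2=(\,\Box_G^{(q)}u\,|\,u\,)_M\le\|\Box_G^{(q)}u\|_M\|u\|_M$, all antiholomorphic derivatives are bounded by the right-hand side of \eqref{e-gue230317yyd}. To upgrade to the full gradient I would pass from $\bar L_j u$ to $L_j u$ via the commutators $[L_j,\bar L_j]$; these produce precisely the characteristic direction $T=\tfrac{i}{\sqrt2}(L_n-\bar L_n)$ of \eqref{022}, weighted by the Levi eigenvalues, modulo lower order, so the whole matter reduces to controlling $Tu$.

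Controlling $Tu$ is the heart of the argument and splits into two geometric regimes. If $p\notin\mu^{-1}(0)\cap X$, then by \eqref{E:cmpm} there is $\xi\in\mathfrak g$ with $\omega_0(\xi_{M'}(p))=\langle\mu(p),\xi\rangle\ne0$, i.e.\ the orbit direction $\xi_{M'}(p)$ has a nonzero $T$-component; since $u$ is $G$-invariant, $\xi_{M'}u=0$ by \eqref{e-gue230327ycd}, and solving this relation for $Tu$ expresses it through derivatives of $u$ along $T^{1,0}X\oplus T^{0,1}X$, which are already controlled (the antiholomorphic ones by the basic estimate, the holomorphic ones by the commutator step after absorption). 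If $p\in\mu^{-1}(0)\cap X$, the orbit directions lie in the Levi distribution and carry no $T$-component, so the group action gives nothing; there I would instead use that the Levi form is positive or negative definite near $\mu^{-1}(0)\cap X$ by Assumption~\ref{a-gue2204300124}, which for the range $1\le q\le n-2$ places us in condition $Z(q)$ for either sign. A microlocal decomposition into the regions where the $T$-frequency is positive, negative, or bounded makes $\Box_G^{(q)}$ elliptic on the bounded region and supplies the favorable sign of the Levi boundary term on the relevant frequency cone, yielding the subelliptic gain which, bootstrapped once, produces the $W^1$ bound. The normal first derivative is then recovered from the noncharacteristic second-order equation $\Box_G^{(q)}u=f$ once all tangential derivatives are under control, and summing the local estimates and absorbing the error terms by the large/small-constant trick gives \eqref{e-gue230317yyd}.

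The step I expect to be the main obstacle is matching the two regimes in the transition region surrounding $\mu^{-1}(0)\cap X$, where the $T$-component of the orbits degenerates to zero: there the $G$-invariance argument loses force exactly as the Levi non-degeneracy takes over, and one must quantify this trade-off uniformly — for instance with a frequency-dependent weight — so that the constants of the two local estimates can be combined into a single bound. A secondary difficulty is the careful bookkeeping of the boundary terms and of the feedback between the $L_j$ and $\bar L_j$ estimates, which must be genuinely absorbed rather than merely dominated.
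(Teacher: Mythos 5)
Your proposal is correct and follows essentially the same route as the paper's proof: interior ellipticity plus a partition of unity; on boundary patches meeting $\mu^{-1}(0)\cap X$, the standard subelliptic estimate valid for $1\le q\le n-2$ under either sign of the definite Levi form; and, away from $\mu^{-1}(0)\cap X$, the use of $G$-invariance ($\xi_{M'}u=0$) together with $\langle\omega_0,\xi_{M'}\rangle\neq0$ to solve for the missing $T$-derivative in terms of $L_j$, $\bar L_j$ derivatives, exactly as in \eqref{e-gue230317ycd}--\eqref{e-gue230302yyd}. The one point to correct is the ``transition region'' you flag as the main obstacle: it does not arise, because Assumption~\ref{a-gue2204300124} gives Levi-definiteness on an open neighborhood of $\mu^{-1}(0)\cap X$ (not merely on that set), so the two regimes form an open cover of the compact boundary and a finite partition of unity combines the finitely many local constants, with no uniform frequency-dependent matching required.
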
 

\begin{proof}
Fix $p\in X$. Assume that $p\in\mu^{-1}(0)\cap X$. 
There exists a neighborhood $V$ of $p$ in $X$ such that the Levi form is positive or negative on $V$. Let $U$ be a neighborhood of $p$ in $M'$ such that $U\cap X=V$. Let 
$u\in{\rm Dom\,}\Box^{(q)}_G\cap\Omega^{0,q}(\overline M)^G$. 
Let $\chi\in C^\infty_c(U)$ and put $v:=\chi u$. Since the Levi form is positive or negative on $V$ then one has
\begin{equation}\label{024}
\|v\|^2_{1,\ol M}\leq C\Bigr(\|\overline\partial v\|^2_M+\|\overline\partial^\ast v\|^2_M+\|v\|^2_M\Bigr),
\end{equation} 
where $C>0$ is a constant independent of $u$ ($C$ depends on $\chi$). 

Now assume $p\notin \mu^{-1}(0)\cap X$. Then there exists a neighborhood $U$ of $p$ in $M'$ such that $\mu(\tilde p)\neq 0, \forall \tilde p\in U$. Moreover, we assume that $\ol U\cap \mu^{-1}(0)=\emptyset$. Let $z=(z_1,\ldots,z_n)$ be holomorphic coordinates centered at $p$ defined on an open neighborhood $U$ of $p$ in $M'$ such that \eqref{022} holds. We will use the same notations as in the discussion before \eqref{e-gue230316yyda}. On $U$, we write $u=\sideset{}{'}\sum_{|J|=q} u_J\overline \omega^J$, where $\sideset{}{'}\sum$ means that the summation is performed only over strictly increasing multiindices and for $J=(j_1,\ldots,j_q)$, $\ol\omega^J=\ol\omega^{j_1}\wedge\cdots\wedge\ol\omega^{j_q}$. Let $\chi\in C^\infty_c(U)$. 
For every strictly increasing multiindex $J$, $\abs{J}=q$, put $v_J:=\chi u_J$, 
$v=\sideset{}{'}\sum_{|J|=q} v_J\overline \omega^J$. 
We have
\begin{equation}\label{e-gue171008yb}
\begin{split}
\norm{v}^2_{1,\ol M}\leq C_1\Bigr(\sideset{}{'}\sum_{|J|=q, j\in\set{1,\ldots,n}}\|\overline L_jv_J\|^2_M+\sideset{}{'}\sum_{|J|=q, j\in\set{1,\ldots,n}}\|L_jv_J\|^2_M+\norm{v}^2_M\Bigr),
\end{split}
\end{equation}
where $C_1>0$ is a constant. Moreover, it is easy to see that
\begin{equation}\label{e1-170913}
\begin{split}
&\|\overline\partial v\|^2_M+\|\overline\partial^{\ast}v\|^2_M\\
&=\sideset{}{'}\sum_{|J|=q, j\notin J,j\in\set{1,\ldots,n}}\|\overline L_jv_J\|^2_M+
\sideset{}{'}\sum_{|J|=q, j\in J,j\in\set{1,\ldots,n}}\|L_jv_J\|^2_M+O(\|v\|_M\cdot\|v\|_{1,\ol M}).
\end{split}
\end{equation}
For $j=1,\ldots,n-1$ and every strictly increasing multiindex $J$, $\abs{J}=q$, we have
\begin{equation}\label{e-gue171008ybI}
\begin{split}
&\norm{L_jv_J}^2_M=\norm{\ol L_jv_J}^2_M+O(\norm{v}_M\norm{v}_{1,\ol M}),\\
&\norm{\ol L_jv_J}^2_M=\norm{L_jv_J}^2_M+O(\norm{v}_M\norm{v}_{1,\ol M}).
\end{split}
\end{equation}
From \eqref{e1-170913} and \eqref{e-gue171008ybI}, we deduce that
\begin{equation}\label{e1-170913r}
\begin{split}
&\|\overline\partial v\|^2_M+\|\overline\partial^{\ast}v\|^2_M\\
&=\frac{1}{2}\sideset{}{'}\sum_{|J|=q, j\in\set{1,\ldots,n-1}}\Bigr(\|\overline L_jv_J\|^2_M+\|L_jv_J\|^2_M\Bigr)\\
&+\sideset{}{'}\sum_{|J|=q, n\notin J}\|\ol L_nv_J\|^2_M+\sideset{}{'}\sum_{|J|=q, n\in J}\|L_nv_J\|^2_M+O(\|v\|_M\cdot\|v\|_{1,\ol M}).
\end{split}
\end{equation}
From \eqref{e-gue171008yb}, \eqref{e1-170913r}, we see that if $U$ is small enough, then
\begin{equation}\label{e-gue171008lv}
\|v\|^{2}_{1,\ol M}\leq C_2\Bigr(\|Tv\|^{2}_M+\|v\|^{2}_M+\|\ddbar v\|^{2}_M+\norm{\ddbar^\ast v}^2_M\Bigr),
\end{equation}
where $C_2>0$ is a constant and $\|Tv\|_M^2:=\sum_{|J|=q}^\prime\|Tv_J\|^2$.

Since $p\notin \mu^{-1}(0)\cap X$, there exists $\xi_M\in \underline{\mathfrak{g}}$ such that $\langle\omega_0,\xi_M\rangle\neq 0$ on $\overline U$ when $\overline U$ is sufficiently small. Then
\begin{equation*}
\xi_M|_X+\langle\omega_0,\xi_M\rangle T|_X\in T^{1,0}X\bigoplus T^{0,1}X.
\end{equation*}
Thus by Taylor's expansion,
\begin{equation*}
	\xi_M+\langle\omega_0,\xi_M\rangle T=\sum_{j=1}^{n-1}a_jL_j+\sum_{j=1}^{n-1}b_j\bar L_j+O(|z|)D,
\end{equation*}
where $a_j$, $b_j$ are smooth functions, $j=1,\ldots,n-1$, $D$ is a first order differential operator. Since $u\in\Omega_G^{0,q}(M)$, one has
$\xi_M v=O(\|u\|_M)$. Then
\begin{equation}\label{e-gue230317ycd}
\langle\omega_0,\xi_M\rangle Tv_J=-\xi_M v_J+\sum_{j=1}^{n-1}a_jL_jv_J+
\sum_{j=1}^{n-1}b_j\bar L_jv_J+O(|z|)Dv_J.
\end{equation}
Note that $\langle\omega_0,\xi_M\rangle\neq 0$ on $\overline U$. Then we can assume that
$|\langle\omega_0,\xi_M\rangle|\geq C>0$ on $\ol U$, where $C$ is a constant. Hence
\begin{equation}\label{e-gue230302yyd}
\|Tv\|^2_M\leq\hat C\Bigr(\sum_{j=1}^{n-1}\sideset{}{'}\sum_{|J|=q}\|\bar L_jv_J\|^2_M+\|u\|^2_M+\varepsilon_p\|v\|^2_{1,\ol M}\Big),
\end{equation}
where $\hat C>0$ is a constant and $\varepsilon_p>0$ is sufficiently small when $U$ is chosen to be small.

From \eqref{e-gue171008ybI}, \eqref{e-gue171008lv} and \eqref{e-gue230302yyd}, we deduce that 
if $U$ is small, then 
\begin{equation}\label{e-gue230302yydI}
\norm{\chi g}^2_{1,\ol M}\leq C\Bigr(\norm{\Box^{(q)}_Gg}^2_M+\norm{g}^2_M\Bigr),
\end{equation}
for all $g\in\Omega^{0,q}(\ol M)^G\cap{\rm Dom\,}\Box^{(q)}_G$, where $C>0$ is a constant independent of $g$ ($C$ depends on $\chi$).  

As before, let $u\in\Omega^{0,q}(\ol M)^G\cap{\rm Dom\,}\Box^{(q)}_G$ and let $\hat\chi\in C^\infty_c(M')$, $\hat\chi\equiv1$ near $X$, $\hat\chi\equiv0$ outside some small neighborhood 
of $X$ in $M'$. From \eqref{024}, \eqref{e-gue230302yydI} and by using partition of unity, we have 
\begin{equation}\label{e-gue230302yydII}
\norm{\hat\chi u}^2_{1,\ol M}\leq\hat C\Bigr(\norm{\Box^{(q)}_Gu}^2_M+\norm{u}^2_M\Bigr),
\end{equation}
where $\hat C>0$ is a constant independent of $u$. Since $\Box^{(q)}$ is elliptic away the 
boundary $X$, 
\begin{equation}\label{e-gue230302yydIII}
\norm{(1-\hat\chi)u}^2_{1,\ol M}\leq\tilde C\Bigr(\norm{\Box^{(q)}_Gu}^2_M+\norm{u}^2_M\Bigr),
\end{equation}
where $\tilde C>0$ is a constant independent of $u$. From \eqref{e-gue230302yydII} and 
\eqref{e-gue230302yydIII}, the lemma follows. 
\end{proof}

\begin{lemma}\label{l-02}
Fix $q=1,2,\ldots,n-2$. For all $k\in\mathbb N$, there exists $C_k>0$ such that
\begin{equation}\label{basic estimate}
\|f\|_{k,\ol M}^2\leq C_k(\|\Box_G^{(q)}f\|^2_{k-1,\ol M}+\|f\|^2_M), \forall f\in\Omega^{0,q}(\ol M)^G\cap {\rm Dom\,}\Box_G^{(q)}.
\end{equation}
\end{lemma}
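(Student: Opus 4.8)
The plan is to prove \eqref{basic estimate} by induction on $k$, the case $k=1$ being precisely Lemma~\ref{t-021}. Since every $f\in\Omega^{0,q}(\ol M)^G\cap{\rm Dom\,}\Box^{(q)}_G$ is smooth up to $X$, all the norms in question are already finite and I only have to produce the inequality with a constant independent of $f$; this is a genuine a priori estimate, so no Friedrichs mollifier is needed. I would first split $f$ by a $G$-invariant partition of unity, as at the end of the proof of Lemma~\ref{t-021}, into an interior piece $(1-\hat\chi)f$ and finitely many boundary pieces $\chi f$. On the interior piece $\Box^{(q)}_G$ coincides with the elliptic formal Laplacian $\Box^{(q)}_f$, so the standard interior elliptic estimate controls $(1-\hat\chi)f$ with a gain of two derivatives, $\|(1-\hat\chi)f\|_{k,\ol M}\lesssim\|\Box^{(q)}_Gf\|_{k-2,\ol M}+\|f\|_M$, which is more than enough. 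Everything therefore reduces to the boundary pieces, where I fix coordinates $(x_1,\dots,x_{2n-1},\rho)$ as around \eqref{022} and separate the tangential derivatives $D_t=(D_{x_1},\dots,D_{x_{2n-1}})$ from the normal derivative $\pr_\rho$.

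The core is a tangential regularity estimate. For a tangential multi-index $\alpha$ with $\abs{\alpha}\le k-1$ I would rerun the proof of Lemma~\ref{t-021} with $u$ replaced by $v=\chi D_t^\alpha f$. Two features make this legitimate only up to lower order, and both are absorbed by the induction hypothesis. First, $D_t^\alpha$ preserves the boundary conditions $\gamma\,\ddbar\rho^{\wedge,\ast}u=0$ and $\gamma\,\ddbar\rho^{\wedge,\ast}\ddbar u=0$ of \eqref{Neumann condition} only modulo the commutators $[\ddbar\rho^{\wedge,\ast},D_t^\alpha]$, which are boundary operators of order $\le k-1$, so $v$ satisfies these conditions up to an error governed by $\|f\|_{k-1,\ol M}$. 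Second, although $\xi_M f=0$ for $G$-invariant $f$, one only has $\xi_M(D_t^\alpha f)=[\xi_M,D_t^\alpha]f=O(\|f\|_{k-1,\ol M})$, so the group-action step \eqref{e-gue230317ycd}--\eqref{e-gue230302yyd} that controls the transverse field $T$ goes through with the same kind of error. Writing $\Box^{(q)}_GD_t^\alpha=D_t^\alpha\Box^{(q)}_G+[\Box^{(q)}_G,D_t^\alpha]$ and using $\|D_t^\alpha\Box^{(q)}_Gf\|_M\le\|\Box^{(q)}_Gf\|_{k-1,\ol M}$, I obtain control of $\sum_{\abs{\alpha}\le k-1}\|\chi D_t^\alpha f\|_{1,\ol M}$, that is, of all derivatives of $f$ of total order $\le k$ carrying at most one $\pr_\rho$.

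The genuinely delicate point, exactly as in Lemma~\ref{t-021}, is the commutator $[\Box^{(q)}_G,D_t^\alpha]$: it is a differential operator of order $k$, the same order as the quantity being bounded, so its top-order part cannot simply be dominated by the induction hypothesis. I expect to handle it as in the base case, via the $\varepsilon_p$-mechanism of \eqref{e-gue230302yyd}: choosing each boundary patch small makes the coefficients of the top-order commutator terms uniformly small, so they can be absorbed into the left-hand side, while all terms of order $\le k-1$ fall under the induction hypothesis. This absorption, together with the approximate preservation of the boundary conditions and of $G$-invariance discussed above, is where the main work lies.

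Finally I would recover the pure normal derivatives from the equation. Since $\Box^{(q)}_G$ is elliptic as a differential operator, the coefficient $A$ of $\pr_\rho^2$ is invertible near $X$, so $\pr_\rho^2 f=A^{-1}\big(\Box^{(q)}_Gf-(\text{terms with at most one }\pr_\rho)\big)$. Applying $\pr_\rho^{\,j-2}D_t^\beta$ with $(j-2)+\abs{\beta}\le k-2$ and inducting on the number $j\ge2$ of normal derivatives, each use of this identity trades two normal derivatives for one factor of $\Box^{(q)}_Gf$, contributing at most $\|\Box^{(q)}_Gf\|_{k-2,\ol M}$, plus lower-normal-order terms already bounded by the tangential estimate and by the induction on $j$. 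Assembling the interior estimate, the tangential estimate, and this normal-derivative recovery, and invoking the equivalence of $\|\cdot\|_{s,\ol M}$ and $\|\cdot\|_{(s),\ol M}$ recorded after \eqref{e-gue171007hyc}, yields \eqref{basic estimate}.
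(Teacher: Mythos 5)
Your architecture coincides with the paper's: base case Lemma~\ref{t-021}, localization by a partition of unity, tangential derivatives $\mathcal T^{k-1}=D_t^\alpha$ applied to $\chi f$ near the boundary, the $G$-invariance step carried out exactly as in \eqref{e6-23-3} (where $\xi_M\mathcal T^{k-1}(\chi f)$ is reduced to commutators using $\xi_Mf=0$), and recovery of normal derivatives from the noncharacteristic $\pr_\rho^2$-coefficient of the Laplacian --- the paper delegates this last step to the analogue of \cite[Lemma~5.2.4]{CS01}, recorded as \eqref{e1-23-3}, which is precisely your $\pr_\rho^2 f=A^{-1}(\Box^{(q)}_Gf-\cdots)$ bootstrapping. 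The paper also splits the boundary into two regimes: near $\mu^{-1}(0)\cap X$ the Levi form is nondegenerate by Assumption~\ref{a-gue2204300124} and the classical estimates \eqref{e1-23-3-4} of \cite[Chapter~5]{CS01} apply with no group action at all, while the invariance argument is needed only away from $\mu^{-1}(0)\cap X$; your plan merges these, which is harmless since rerunning Lemma~\ref{t-021} on each patch reproduces the same dichotomy.

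There is, however, one concrete flaw, and it sits exactly at the step you single out as ``where the main work lies.'' You propose to absorb the top-order part of $[\Box^{(q)}_G,D_t^\alpha]$ by shrinking the boundary patches so that its coefficients become uniformly small. That mechanism does not apply here: the order-$k$ coefficients of this commutator are first derivatives of the coefficients of $\Box^{(q)}_G$ (metric and connection terms), and these are fixed smooth functions that in general do not vanish at the center $p$ of the chart, so restricting to a small patch does not shrink their sup norms. The $\varepsilon_p$-smallness in \eqref{e-gue230302yyd} comes from a different source, namely the $O(\abs{z})D$ term in the Taylor expansion \eqref{e-gue230317ycd} of $\xi_M+\langle\omega_0,\xi_M\rangle T$, whose coefficient vanishes at $p$ by construction --- it is special to the $T$-field estimate and has no analogue for $[\Box^{(q)}_G,D_t^\alpha]$. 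The correct treatment, which the paper imports from \cite[(5.2.14)]{CS01} as \eqref{e11-23-3}, is to keep the commutator inside the energy identity for $\|\ddbar\mathcal T^{k-1}(\chi f)\|^2_M+\|\ddbar^*\mathcal T^{k-1}(\chi f)\|^2_M$, integrate one tangential derivative by parts, and apply Cauchy--Schwarz; this produces mixed products such as $|||f|||_{k-1(M_\delta)}\cdot|||\Box^{(q)}_Gf|||_{k-1(M_\delta)}$ and $\norm{f}_{k-1,\ol M}\,|||\ddbar(\chi f)|||_{k-1}$, which are then split by the small/large-constant trick into $\varepsilon\norm{\chi f}^2_{k,\ol M}+\frac{1}{\varepsilon}\norm{f}^2_{k-1,\ol M}$, yielding the Claim \eqref{e2-23-3-4}. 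So the $\varepsilon$ in the absorption is an interpolation constant, not a geometric smallness of coefficients. With this replacement (a standard but genuinely different argument from the one you describe), the rest of your proposal goes through and reproduces the paper's proof; as written, the absorption step would fail.
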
 

\begin{proof}
Fix $p\in X$. If $p\in\mu^{-1}(0)\cap X$, then by Assumption \ref{a-gue2204300124}, $X$ is strongly pseudoconvex or strongly pseudoconcave near $p$. Let $U$ be a neighborhood of $p$ in $M'$ such that $U\cap X$ is strongly pseudoconvex or strongly pseudoconcave and choose a cut-off function $\eta\in C_c^\infty(U)$. Then it is well-known that (see~\cite[Chapter 5]{CS01}) for $k\in\mathbb N$, 
\begin{equation}\label{e1-23-3-4}
\|\eta f\|_{k,\ol M}^2\leq C_k\Bigr(\|\Box_G^{(q)}f\|_{k-1,\ol M}^2+\|f\|^2_M\Bigr), \forall f\in\Omega^{0, q}(\overline M)^G\cap {\rm Dom\,}\Box^{(q)}_G,
\end{equation}
where $C_k>0$ is a constant independent of $f$. 

Next we assume $p\not\in\mu^{-1}(0)\cap X$. We can assume that 
there exists a neighborhood $U$ of $p$ in $M'$ such that $\overline U\cap\mu^{-1}(0)=\emptyset$ and special boundary coordinates $(t_1, t_2, \cdots, t_{2n-1}, \rho)$ centered at $p$ such that $t_1, \cdots, t_{2n-1}$ restricted to $X$ are coordinates for $X$. For $u\in C_c^\infty(U\cap\overline M)$, the partial Fourier transform of $u$ is defined by 
$$\hat u(\tau, \rho):=\int_{\mathbb R^{2n-1}}e^{-i<t, \tau>}u(t, \rho)dt,$$
where $t=(t_1,\ldots,t_{2n-1})$. 
For $s\in\mathbb R$, the tangential Sobolev norms  $|||u|||_s$ of $u$ is defined by 
$$|||u|||_s^2=\int_{\mathbb R^{2n-1}}\int_{-\infty}^0(1+|\tau|^2)^s|\hat u(\tau, \rho)|^2d\rho d\tau.$$
For $\delta>0$, $M_{\delta}$ is defined by $M_{\delta}:=\{z\in \overline M: \rho(z)>-\delta\}$. Choose a $\delta$ sufficiently small such that the tangential Sobolev norm can be defined on $M_{\delta}$ by the partition of unity and we will use $|||\cdot|||_{s(M_\delta)}$ to denote the 
tangential Sobolev norm on $M_{\delta}$. By a similar argument in the proof of \cite[Lemma 5.2.4]{CS01}, one has for every $k\in\mathbb N$, 
\begin{equation}\label{e1-23-3}
\|f\|_{k,\ol M}\leq\hat C_k\Bigr(\|\Box^{(q)}_G f\|_{k-1,\ol M}+|||f|||_{k(M_\delta)}+\|f\|_M\Bigr), \forall f\in\Omega^{0, q}(\overline M)^G\cap{\rm Dom\,}\Box^{(q)}_G,
\end{equation}
where $\hat C_k>0$ is a constant. Here, we do not need the condition that $M$ is pseudoconvex as in \cite[Lemma 5.2.4]{CS01} since we have one more term $|||f|||_{k(M_\delta)}$ in the above estimate (\ref{e1-23-3}) which can be controlled by $\|\Box^{(q)}_Gf\|_{k-1,\ol M}$ when $M$ is a bounded  strongly pseudoconvex domain.

Choose $\chi\in C_c^\infty(U\cap\overline M)$. We have for every $k\in\mathbb N$,
$$\frac{1}{\tilde C_k}\sum_{|\alpha|\leq k-1}|||D_t^\alpha(\chi f)|||_1\leq|||\chi f|||_k\leq\tilde C_k\sum_{|\alpha|\leq k-1}|||D_t^\alpha(\chi f)|||_1,$$
for every $f\in\Omega^{0,q}(\ol M)^G\cap{\rm Dom\,}\Box^{(q)}_G$, where $\tilde C_k>1$ is a constant independent of $f$.

We prove the following 

{\bf Claim:} Let $k\in\mathbb N$. For any $\varepsilon>0$, $\varepsilon\ll1$, we can take $U$ small enough so that 

\begin{equation}\label{e2-23-3-4}
|||\chi f|||_k\leq C\Bigr(\|\Box^{(q)}_Gf\|_{k-1,\ol M}+\frac{1}{\varepsilon}\|f\|_{k-1,\ol M}+\varepsilon \|\chi f\|_{k,\ol M}\Bigr), \forall f\in\Omega^{0, q}(\overline M)^G\cap{\rm Dom\,}\Box^{(q)}_G,
\end{equation}
where $C>0$ is a constant independent of $\varepsilon$. 
Let $\mathcal T^k$ denote a $k$-th order tangential differential operator of the form $D_t^\alpha$ where $|\alpha|=k$. Let $f\in\Omega^{0, q}(\overline M)^G\cap{\rm Dom\,}\Box^{(q)}_G$.
Recall that $D_{t}\chi f\in{\rm Dom\,}\overline\partial^\ast$, for any $D_t$. Then from (\ref{e-gue171008lv}) one has 
\begin{equation}\label{e10-23-3}
\begin{split}
|||\mathcal T^{k-1}(\chi f)|||_1
&\leq C\Bigr(\|T\mathcal T^{k-1}(\chi f)\|_M+\|\mathcal T^{k-1}(\chi f)\|_M+
\|\overline\partial \mathcal T^{k-1}(\chi f)\|_M+\|\overline\partial^\ast\mathcal T^{k-1}(\chi f)\|_M\Bigr)\\
&\leq C_1\Bigr( \|T\mathcal T^{k-1}(\chi f)\|_M+|||f|||_{k-1(M_\delta)}+\|\overline\partial \mathcal T^{k-1}(\chi f)\|_M+\|\overline\partial^\ast\mathcal T^{k-1}(\chi f)\|_M\Bigr),
\end{split}
\end{equation}
where $C, C_1>0$ are constants. 
It follows from \cite[(5.2.14)]{CS01} with some minor modification that
\begin{equation}\label{e11-23-3}
\begin{split}
&\|\overline\partial \mathcal T^{k-1}(\chi f)\|^2_M+\|\overline\partial^\ast\mathcal T^{k-1}(\chi f)\|^2_M\leq C_2\Bigr(|||f|||_{k-1(M_\delta)}\cdot|||\Box^{(q)}_Gf|||_{k-1(M_\delta)}+\|f\|_{k-1,\ol M}^2\\
&\quad+\norm{f}_{k-1,\ol M}|||\ddbar(\chi f)|||_{k-1}+\norm{f}_{k-1,\ol M}|||\ddbar^*(\chi f)|||_{k-1}\Bigr),
\end{split}
\end{equation}
where $C_2>0$ is a constant. 
Next, we estimate $\|T\mathcal T^{k-1}(\chi f)\|$.
From \eqref{e-gue230317ycd}, it follows that 
\begin{equation}\label{e-gue230317ycdm}
\langle \omega_0, \xi_M\rangle T\mathcal T^{k-1}(\chi f)=-\xi_M \mathcal T^{k-1}(\chi f)+\sum_{j=1}^{n-1}a_jL_j\mathcal T^{k-1}(\chi f)+\sum_{j=1}^{n-1}b_j\overline L_j\mathcal T^{k-1}(\chi f)+O(|z|)D\mathcal T^{k-1}(\chi f).
\end{equation}\label{e8-23-3}
Note that $|\langle \omega_0, \xi_M\rangle|\geq C>0$ on $\overline U$ with constant $C>0$.
Notice that 
\begin{equation}\label{e6-23-3}
\begin{split}
\xi_M\mathcal T^{k-1}(\chi f)&=[\xi_M, \mathcal T^{k-1}](\chi f)+\mathcal T^{k-1}\xi_M(\chi f)\\
&=[\xi_M, \mathcal T^{k-1}](\chi f)+\mathcal T^{k-1}[\xi_M, \chi]f+\mathcal T^{k-1}\chi\xi_M f.
\end{split}
\end{equation}
Since $f\in\Omega^{0, q}(\overline M)^G$, then one has $\xi_Mf=0$. From 
this observation and \eqref{e6-23-3}, we have 
\begin{equation}\label{e-gue230317ycdn}
\left\|-\frac{1}{\langle\omega_0, \xi_M\rangle}\xi_M\mathcal T^{k-1}(\chi f)\right\|_M\leq C_4|||f|||_{k-1(M_\delta)},
\end{equation}
where $C_4>0$ is a constant. 

Fix $j=1,\ldots,n-1$. It is straightforward to check that for every $\varepsilon>0$, we have 
\begin{equation}\label{e-gue230317ycdI}
\begin{split}
&\norm{L_j\mathcal T^{k-1}\chi f}^2_M+\norm{\ol L_j\mathcal T^{k-1}\chi f}^2_M\\
&\leq C_5\Bigr(\norm{\ddbar\mathcal T^{k-1}\chi f}^2_M+
\norm{\ddbar^*\mathcal T^{k-1}\chi f}^2_M+\frac{1}{\varepsilon}\norm{\mathcal T^{k-1}(\chi f)}^2_M
+\varepsilon\norm{f}^2_{k,\ol M}\Bigr),
\end{split}
\end{equation}
where $C_5>0$ is a constant independent of $\varepsilon$. 
From \eqref{e11-23-3} and \eqref{e-gue230317ycdI}, we deduce that 
\begin{equation}\label{e-gue230317ycda}
\begin{split}
&\norm{L_j\mathcal T^{k-1}\chi f}^2_M+\norm{\ol L_j\mathcal T^{k-1}\chi f}^2_M\\
&\leq C_6\Bigr(|||f|||_{k-1(M_\delta)}\cdot\||\Box^{(q)}_Gf\||_{k-1(M_\delta)}+\|f\|_{k-1,\ol M}^2\\
&\quad+\norm{f}_{k-1,\ol M}|||\ddbar(\chi f)|||_{k-1}+\norm{f}_{k-1,\ol M}|||\ddbar^*(\chi f)|||_{k-1}\\
&+\frac{1}{\varepsilon}\norm{\mathcal T^{k-1}(\chi f)}^2
+\varepsilon\norm{f}^2_{k,\ol M}\Bigr),
\end{split}
\end{equation}
where $C_6>0$ is a constant independent of $\varepsilon$. 
From \eqref{e-gue230317ycdm}, \eqref{e-gue230317ycdn} and \eqref{e-gue230317ycda}, we deduce that for every $\varepsilon>0$, $\varepsilon\ll1$, we can take $U$ small enough so that 
\begin{equation}\label{e-gue230318yyd} 
\norm{T\mathcal T^{k-1}(\chi f)}^2_M\leq C_7\Bigr(|||\Box^{(q)}_Gf|||^2_{k-1(M_\delta)}+\frac{1}{\varepsilon}\norm{f}^2_{k-1,\ol M}+\varepsilon\norm{\chi f}^2_{k,\ol M}\Bigr). 
\end{equation}

From \eqref{e10-23-3}, \eqref{e11-23-3} and \eqref{e-gue230318yyd}, 
we get the claim \eqref{e2-23-3-4}. From \eqref{e1-23-3-4}, \eqref{e1-23-3}, \eqref{e2-23-3-4} and by using partition of unity, the lemma follows. 
\end{proof}

From the above lemma and the techniques of elliptic regularization, one has 

\begin{theorem}\label{t-02}
 Fix $q=1,\ldots,n-2$.
 Suppose $u\in{\rm Dom\,}\Box_G^{(q)}$ and $\Box_G^{(q)}u=v$. If $v\in \Omega^{0,q}(\ol M)^G$, then $u\in \Omega^{0,q}(\ol M)^G$. If $v\in W^{s}(\ol M,T^{*0,q}M')\cap L^2_{(0,q)}(M)^G$, for some $s\in\mathbb N_0$, then $u\in W^{s+1}(\ol M,T^{*0,q}M')\cap L^2_{(0,q)}(M)^G$ and we have 
\begin{equation*}
	\|u\|_{s+1,\ol M}\leq C_s(\|v\|_{s,\ol M}+\|u\|_M), 
\end{equation*}
where $C_s>0$ is a constant independent of $u$.
\end{theorem}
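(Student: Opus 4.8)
The estimates of Lemma~\ref{t-021} and Lemma~\ref{l-02} are \emph{a priori} estimates: they are established only for forms that are already smooth up to the boundary. The plan is therefore to remove the smoothness hypothesis by elliptic regularization, thereby proving both the qualitative statement ($v$ smooth $\Rightarrow u$ smooth) and the quantitative Sobolev estimate at once. Since $\Box^{(q)}_G$ is elliptic in the interior, the whole issue is regularity up to $X$.

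I would regularize as follows. For $t\in(0,1]$ introduce, on the space of forms $w\in W^1(\ol M,T^{*0,q}M')^G$ satisfying the first $\ddbar$-Neumann boundary condition $\gamma\,\ddbar\rho^{\wedge,\ast}w=0$, the Hermitian form
\[
Q_t(u,w):=(\,\ddbar u\,|\,\ddbar w\,)_M+(\,\ddbar^\ast u\,|\,\ddbar^\ast w\,)_M+(\,u\,|\,w\,)_M+t\,D(u,w),
\]
where $D(\cdot,\cdot)$ is a $G$-invariant Dirichlet form with $D(w,w)\ge c\,\|w\|^2_{1,\ol M}$, patched from local expressions $\sum_{|\alpha|\le1}(\,\pr^\alpha w\,|\,\pr^\alpha w\,)_M$. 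For each fixed $t>0$ the form $Q_t$ is $W^1$-coercive and the associated self-adjoint operator $\Box^{(q)}_{G,t}$ is a coercive (Lopatinski--Shapiro) elliptic boundary value problem, so its solutions are smooth up to $X$. Because $J$ and $\langle\,\cdot\,|\,\cdot\,\rangle$ are $G$-invariant, the averaging projection onto $G$-invariant forms commutes with $\ddbar$, $\ddbar^\ast$ and $D$, so one may work entirely inside $L^2_{(0,q)}(M)^G$. Given $u\in{\rm Dom\,}\Box^{(q)}_G$ with $\Box^{(q)}_Gu=v$, I let $u_t\in\Omega^{0,q}(\ol M)^G$ solve $\Box^{(q)}_{G,t}u_t=v+u$. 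Since $\Box^{(q)}_G+{\rm Id}$ is invertible and $(\Box^{(q)}_G+{\rm Id})u=v+u$, monotone form convergence gives $u_t\to u$ in $L^2_{(0,q)}(M)^G$ as $t\to0^+$.

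The heart of the proof is to re-run the arguments of Lemma~\ref{t-021} and Lemma~\ref{l-02} for the \emph{smooth} forms $u_t$ and the operator $\Box^{(q)}_{G,t}$, with all constants independent of $t\in(0,1]$. The extra term $t\,D(u_t,u_t)\ge0$ only helps: testing $Q_t(u_t,u_t)=(\,v+u\,|\,u_t\,)$ bounds $\|\ddbar u_t\|^2_M+\|\ddbar^\ast u_t\|^2_M$ uniformly, while near a point of $\mu^{-1}(0)\cap X$ the subelliptic estimate \eqref{024} is untouched, and near a point off $\mu^{-1}(0)\cap X$ the control \eqref{e-gue230302yyd} of the bad transverse derivative $Tu_t$ still goes through because $u_t$ is $G$-invariant, so the relation $\xi_{M'}u_t=0$ used in \eqref{e-gue230317ycd} continues to supply the missing direction. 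Carrying out the tangential bootstrap of Lemma~\ref{l-02} verbatim then yields, uniformly in $t$,
\[
\|u_t\|_{s+1,\ol M}\le C_s\bigl(\|v+u\|_{s,\ol M}+\|u_t\|_M\bigr),\qquad s\in\mathbb N_0 .
\]
One now argues by induction on $s$: for $s=0$ the right-hand side involves only $\|u\|_M$, giving $u\in W^1$; assuming $u\in W^s(\ol M,T^{*0,q}M')$ the right-hand side is finite, so $\{u_t\}$ is bounded in $W^{s+1}$, a weakly convergent subsequence has limit $u$, and $u\in W^{s+1}(\ol M,T^{*0,q}M')\cap L^2_{(0,q)}(M)^G$. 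Absorbing the lower-order term $\|u\|_{s,\ol M}$ by iterating the inequality downwards reduces the right-hand side to $\|v\|_{s,\ol M}+\|u\|_M$, which is the stated estimate; taking $s$ arbitrary and using Sobolev embedding gives the smoothness statement.

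The step I expect to be the main obstacle is precisely the uniformity in $t$ in the higher-order estimates. For $t>0$ the solution $u_t$ satisfies, in place of the second $\ddbar$-Neumann condition $\gamma\,\ddbar\rho^{\wedge,\ast}\ddbar u_t=0$, a $t$-dependent natural boundary condition coming from $Q_t$; one must check that the boundary integrations by parts underlying \eqref{e11-23-3} (and \cite[(5.2.14)]{CS01}) produce only non-negative boundary contributions and constants independent of $t$, and that the collar error terms $|||f|||_{k(M_\delta)}$ are absorbed uniformly. Once this bookkeeping is settled, the passage $t\to0^+$ is routine and the theorem follows.
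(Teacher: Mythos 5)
Your proposal is correct and follows essentially the same route as the paper, which deduces Theorem~\ref{t-02} directly from Lemma~\ref{l-02} ``and the techniques of elliptic regularization'': you have simply written out in detail the standard Kohn--Nirenberg/Folland--Kohn regularization (adding $t$ times a coercive $G$-invariant Dirichlet form, solving the resulting coercive elliptic boundary value problem, re-running the a priori estimates of Lemma~\ref{t-021} and Lemma~\ref{l-02} uniformly in $t$ --- which works because $u_t$ is again $G$-invariant, so $\xi_{M'}u_t=0$ still controls the transverse derivative --- and passing to the weak limit). You also correctly flag the only delicate point, the $t$-independence of the constants under the $t$-dependent natural boundary condition, which is exactly the bookkeeping handled in \cite[Section 5.2--5.3]{CS01}.
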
 

\begin{corollary}\label{c-02}
For every $q=0,\ldots,n-2$, $\Box_G^{(q)}: {\rm Dom\,}\Box^{(q)}_G\subset L^2_{(0,q)}(M)^G\To L^2_{(0,q)}(M)^G$ has closed range. In particular, for $1\leq q\leq n-2$, ${\rm Ker\,}\Box_G^{(q)}$ is a finite dimensional subspace of $\Omega^{0,q}(\ol M)^G$.
\end{corollary}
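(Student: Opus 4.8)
The plan is to read off both assertions from the subelliptic estimate of Lemma~\ref{t-021}, the regularity statement Theorem~\ref{t-02}, and the compactness of the Rellich embedding $W^1(\ol M,T^{*0,q}M')\hookrightarrow L^2_{(0,q)}(M)$, which is valid because $\ol M$ is a compact manifold with boundary. I first dispose of the range $1\leq q\leq n-2$. The case $s=0$ of Theorem~\ref{t-02} already supplies the estimate
\begin{equation*}
\norm{u}_{1,\ol M}\leq C\bigl(\norm{\Box^{(q)}_Gu}_M+\norm{u}_M\bigr)
\end{equation*}
for \emph{every} $u\in{\rm Dom\,}\Box^{(q)}_G$ (not merely the smooth forms of Lemma~\ref{t-021}, since any such $u$ has $\Box^{(q)}_Gu\in L^2=W^0$), and simultaneously shows that any $u\in{\rm Ker\,}\Box^{(q)}_G$ is smooth, so that ${\rm Ker\,}\Box^{(q)}_G\subset\Omega^{0,q}(\ol M)^G$.

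Granting this estimate on the full domain, finite dimensionality of ${\rm Ker\,}\Box^{(q)}_G$ is immediate: on the kernel the estimate reduces to $\norm{u}_{1,\ol M}\leq C\norm{u}_M$, so the $L^2$-unit ball of ${\rm Ker\,}\Box^{(q)}_G$ is bounded in $W^1$ and hence precompact in $L^2$ by Rellich, and a Hilbert space whose unit ball is precompact is finite dimensional. For closed range I would run the usual contradiction argument to obtain an estimate on the orthogonal complement of the kernel. If $\norm{u}_M\leq C\norm{\Box^{(q)}_Gu}_M$ failed on $({\rm Ker\,}\Box^{(q)}_G)^\perp$, there would exist $u_j\in({\rm Ker\,}\Box^{(q)}_G)^\perp\cap{\rm Dom\,}\Box^{(q)}_G$ with $\norm{u_j}_M=1$ and $\norm{\Box^{(q)}_Gu_j}_M\to0$; the estimate then bounds $\norm{u_j}_{1,\ol M}$, so a subsequence converges in $L^2$ to some $u$ with $\norm{u}_M=1$, and since $\Box^{(q)}_G$ is a closed operator we get $u\in{\rm Ker\,}\Box^{(q)}_G\cap({\rm Ker\,}\Box^{(q)}_G)^\perp=\set{0}$, a contradiction. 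The resulting bound on $({\rm Ker\,}\Box^{(q)}_G)^\perp$ yields closed range.

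It remains to treat $q=0$, where ${\rm Ker\,}\Box^{(0)}_G$ is the infinite dimensional space of $G$-invariant $L^2$ holomorphic functions, which is exactly why finite dimensionality is only claimed for $q\geq1$. Here I would reduce to the case already settled rather than estimate $\Box^{(0)}_G$ directly. Set $T=\ddbar_G$ acting on $L^2(M)^G$ and $S=\ddbar_G$ acting on $L^2_{(0,1)}(M)^G$, so that $\Box^{(1)}_G=TT^\ast+S^\ast S$ and $\Box^{(0)}_G=T^\ast T$. By the standard functional-analytic equivalence for a Hilbert complex (cf. H\"ormander), closed range of $\Box^{(1)}_G$ --- which holds by the previous paragraph, since $n\geq3$ gives $1\leq n-2$ --- forces ${\rm Ran\,}T$ to be closed, and closed range of $T$ is in turn equivalent to closed range of $\Box^{(0)}_G=T^\ast T$. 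This finishes all $q\in\set{0,1,\ldots,n-2}$.

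Throughout, everything may be carried out inside the $G$-invariant subspaces because the orthogonal projection onto $G$-invariant forms commutes with $\ddbar$, $\ddbar^\ast$ and $\Box^{(q)}$ by \eqref{e-gue161231II}. I expect the only genuinely delicate points to be the $q=0$ reduction together with the bookkeeping of the range relations; the $q\geq1$ part is routine once Theorem~\ref{t-02} is granted, the one thing to watch being that the limit produced in the contradiction argument lands in the kernel, which is guaranteed by the closedness of $\Box^{(q)}_G$.
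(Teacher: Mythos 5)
Your proposal is correct and is exactly the argument the paper leaves implicit: the corollary is stated as an immediate consequence of Theorem~\ref{t-02}, whose $s=0$ case gives the estimate $\norm{u}_{1,\ol M}\leq C(\norm{\Box^{(q)}_Gu}_M+\norm{u}_M)$ on all of ${\rm Dom\,}\Box^{(q)}_G$ together with smoothness of the kernel, after which Rellich compactness yields finite dimensionality and the standard contradiction argument yields closed range for $1\leq q\leq n-2$. Your reduction of the $q=0$ case to $q=1$ via the Hilbert-complex identities ($\mathrm{Ran}\,\Box^{(1)}_G$ closed $\Rightarrow$ $\mathrm{Ran}\,\ddbar_G$ closed $\Leftrightarrow$ $\mathrm{Ran}\,\ddbar_G^\ast\ddbar_G=\mathrm{Ran}\,\Box^{(0)}_G$ closed) is likewise the standard functional-analytic step the authors are invoking, consistent with their subsequent use of the partial inverse $N^{(1)}_G$ and the formula \eqref{e-gue230318yyda}.
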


Let $N_G^{(1)}:L^2_{(0,1)}(M)^G\rightarrow {\rm Dom\,}\Box_G^{(1)}$ be the partial inverse of $\Box_G^{(1)}$. We have
\begin{equation*}
	\begin{split}
		\Box_G^{(1)}N_G^{(1)}+B_G^{(1)}&=I \ {\rm on\,} \ L^2_{(0,1)}(M)^G,\\
		N_G^{(1)}\Box_G^{(1)}+B_G^{(1)}&=I  \ {\rm on\,} \  {\rm Dom\,}\Box_G^{(1)}.
	\end{split}
\end{equation*}
By Theorem \ref{t-02}, we conclude that  $N_G^{(1)}: W^s(\overline M,T^{*0,1}M')\cap L^2_{(0,1)}(M)^G\rightarrow W^{s+1}(\overline M,T^{*0,1}M')\cap L^2_{(0,1)}(M)^G$ 
is continuous, for all $s\in\mathbb N_0$. In particular, 
\begin{equation}\label{e-gue230303yyd}
\mbox{$N^{(1)}_G: \Omega^{0,1}(\ol M)^G\To\Omega^{0,1}(\ol M)^G$ is continuous}. 
\end{equation}
Let 
\begin{equation}\label{e-gue230318yydh}
B_G: L^2(M)\To{\rm Ker\,}\Box^{(0)}_G
\end{equation}
be the orthogonal projection ($G$-invariant Bergman projection). 
Note that 
\begin{equation}\label{e-gue230319ycd}
H^0(\ol M)^G=({\rm Ker\,}\Box^{(q)})^G={\rm Ker\,}\Box^{(0)}_G. 
\end{equation}
Let 
\[Q_G: L^2(M)\To L^2(M)^G\]
be the orthogonal projection. It is not difficult to see that 
\begin{equation}\label{e-gue230303yydI}
\mbox{$Q_G: C^\infty(\ol M)^G\To C^\infty(\ol M)^G$ is continuous}. 
\end{equation} 
We can deduce from ~\cite[Theorem 4.4.5]{CS01} that 
\begin{equation}\label{e-gue230318yyda}
\mbox{$B_G=(I-\ddbar^*_GN^{(1)}_G\ddbar_G)\circ Q_G$ on $L^2(M)$}. 
\end{equation}
From \eqref{e-gue230303yyd}, \eqref{e-gue230303yydI} and \eqref{e-gue230318yyda}, we get 

\begin{theorem}\label{t-gue230318yyd}
We have $B_G: C^\infty(\ol M)\To C^\infty(\ol M)^G$ is continuous. 
\end{theorem}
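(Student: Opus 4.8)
The plan is to read off the conclusion directly from the Kohn-type decomposition \eqref{e-gue230318yyda}, namely $B_G=(I-\ddbar^*_GN^{(1)}_G\ddbar_G)\circ Q_G$, by showing that each factor on the right-hand side sends smooth functions (resp.\ smooth $(0,1)$-forms) continuously to smooth objects while preserving $G$-invariance at the appropriate stage. Once this is done, $B_G$ is exhibited as a finite composition of continuous maps between spaces of smooth sections, and the statement follows at once.

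First I would treat $Q_G$. Since $G$ is compact, the orthogonal projection $Q_G:L^2(M)\To L^2(M)^G$ is the group average $Q_Gu=\int_G g^*u\,dg$ with respect to normalized Haar measure $dg$. For $u\in C^\infty(\ol M)$, smoothness of the $G$-action together with compactness of $G$ permits differentiation under the integral sign, so that $Q_Gu\in C^\infty(\ol M)^G$ and $u\mapsto Q_Gu$ is continuous from $C^\infty(\ol M)$ to $C^\infty(\ol M)^G$ in the $C^\infty$ topology (compare \eqref{e-gue230303yydI}). Next, as the $G$-action is holomorphic, $\ddbar$ commutes with $g^*$ by \eqref{e-gue161231II}, so $\ddbar_G=\ddbar$ maps $C^\infty(\ol M)^G$ continuously into $\Omega^{0,1}(\ol M)^G$. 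The regularity \eqref{e-gue230303yyd} of the $G$-invariant Neumann operator then yields that $N^{(1)}_G$ maps $\Omega^{0,1}(\ol M)^G$ continuously to $\Omega^{0,1}(\ol M)^G$; this is precisely where the subelliptic and elliptic estimates of Lemma~\ref{t-021}, Lemma~\ref{l-02} and Theorem~\ref{t-02} are used. Finally, the range of $N^{(1)}_G$ lies in ${\rm Dom\,}\Box^{(1)}_G\subset{\rm Dom\,}\ddbar^*_G$, and on smooth forms in ${\rm Dom\,}\ddbar^*$ the operator $\ddbar^*$ agrees with the formal adjoint $\ddbar^*_f$ (as recalled after \eqref{Neumann condition}); hence $\ddbar^*_G$ acts on the relevant $G$-invariant smooth forms as the first-order differential operator $\ddbar^*_f$ and sends them continuously into $C^\infty(\ol M)^G$, with $G$-invariance again preserved by \eqref{e-gue161231II}.

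Composing these four steps, for $u\in C^\infty(\ol M)$ we obtain
\[
B_Gu=Q_Gu-\ddbar^*_GN^{(1)}_G\ddbar_GQ_Gu\in C^\infty(\ol M)^G,
\]
and each arrow is continuous, whence $B_G:C^\infty(\ol M)\To C^\infty(\ol M)^G$ is continuous. I do not expect a genuine obstacle at this last stage: the analytic substance has already been absorbed into \eqref{e-gue230318yyda} (which rests on Kohn's formula, \cite[Theorem 4.4.5]{CS01}) and into the interior and boundary regularity \eqref{e-gue230303yyd} of $N^{(1)}_G$. The only points demanding care are the identification of $\ddbar^*_G$ with the formal adjoint on smooth forms in its domain and the bookkeeping that $G$-invariance propagates through each factor, both of which are routine given \eqref{e-gue161231II} and \eqref{Neumann condition}.
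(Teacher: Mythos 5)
Your proposal is correct and takes essentially the same route as the paper: the paper obtains Theorem~\ref{t-gue230318yyd} precisely by combining the Kohn-type formula \eqref{e-gue230318yyda} with the regularity statements \eqref{e-gue230303yyd} and \eqref{e-gue230303yydI}, which is exactly the factorization $B_G=(I-\ddbar^*_GN^{(1)}_G\ddbar_G)\circ Q_G$ you analyze term by term. Your extra details (group averaging for $Q_G$, commutation \eqref{e-gue161231II}, and the identification of $\ddbar^*_G$ with the formal adjoint on smooth forms in its domain via \eqref{Neumann condition}) simply make explicit the steps the paper leaves implicit.
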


\section{The Poisson operators and reduction to the boundary}\label{s-gue171010}

Until further notice, we fix $q\in\set{0,1,\ldots,n-1}$.
We first introduce some notations. We remind the reader that for $s\in\mathbb R$, the space $W^{s}(\ol M,T^{*0,q}M')$ was introduced in the discussion after Definition~\ref{def-111217}. 
Let
\[\ddbar^\ast_f: \Omega^{0,q+1}(M')\To\Omega^{0,q}(M')\]
be the formal adjoint of $\ddbar$ with respect to $(\,\cdot\,|\,\cdot\,)_{M'}$. That is,
\[(\,\ddbar f\,|\,h\,)_{M'}=(\,f\,|\,\ddbar^\ast_fh\,)_{M'},\]
$f\in\Omega^{0,q}_c(M')$, $h\in\Omega^{0,q+1}(M')$. Let
\[\Box^{(q)}_f=\ddbar\,\ddbar^\ast_f+\ddbar^\ast_f\,\ddbar: \Omega^{0,q}(M')\To\Omega^{0,q}(M')\]
denote the complex Laplace-Beltrami operator on $(0, q)$ forms. As before, let $\gamma$ denote the operator of restriction to the boundary $X$. Let us consider the map
\begin{equation}\label{e-gue171010y}
\begin{split}
F^{(q)}:W^{2}(\ol M,T^{*0,q}M')&\rightarrow L^{2}_{(0,q)}(M)\oplus
W^{\frac{3}{2}}(X,T^{*0,q}M')\\
u&\mapsto (\Box_f^{(q)}u, \gamma u).
\end{split}
\end{equation}
It is well-known that (see~\cite{B71})
$\dim{\rm Ker\,}F^{(q)}<\infty$ and ${\rm Ker\,}F^{(q)}\subset \Omega^{0,q}(\ol M)$. Let
\begin{equation}\label{e-gue171011}
K^{(q)}: W^2(\overline M, T^{*0,q}M')\rightarrow{\rm Ker\,}F^{(q)}
\end{equation}
be the orthogonal projection with respect to $(\,\cdot\,|\,\cdot\,)_M$.  Put $\tilde \Box_f^{(q)}=\Box^{(q)}_f+K^{(q)}$ and consider the map
\begin{equation}\label{e-gue171010yI}
\begin{split}
\tilde F^{(q)}: W^2(\overline M, T^{*0,q}M')&\rightarrow L^{2}_{(0,q)}(M)\oplus W^{\frac{3}{2}}(X,T^{*0,q}M')\\
u&\mapsto (\tilde\Box_f^{(q)}u, \gamma u).
\end{split}
\end{equation}
Then $\tilde F^{(q)}$ is injective (see~\cite[Part II, Chapter 3]{Hsiao08}). Let
\begin{equation}\label{e-gue171010yII}
\tilde P: C^\infty(X, T^{*0,q}M')\rightarrow\Omega^{0,q}(\overline M)
\end{equation}
be the Poisson operator for $\tilde \Box^{(q)}_f$ which is well-defined since \eqref{e-gue171010yI} is injective. The Poisson operator $\tilde P$ satisfies
\begin{equation}\label{e-gue171011II}
\begin{split}
&\tilde\Box^{(q)}_f\tilde Pu=0,\ \ \forall u\in C^\infty(X, T^{*0,q}M'),\\
&\gamma\tilde Pu=u,\ \  \forall u\in C^\infty(X, T^{*0,q}M').
\end{split}
\end{equation}

It is known that $\tilde P$ extends continuously
\[\tilde P: W^s(X, T^{*0,q}M')\rightarrow W^{s+\frac{1}{2}}(\overline M, T^{*0,q}M'),\ \ \forall s\in\mathbb R\]
(see~\cite[Page 29]{B71}). Let
$$\tilde P^\ast: \hat{\mathscr D}'(\overline M, T^{*0,q}M')\rightarrow\mathscr D'(X, T^{*0,q}M')$$ be the operator defined by
\[(\,\tilde P^\ast u\,|\,v\,)_X=(\,u\,|\,\tilde Pv\,)_M,\ \ u\in\hat{\mathscr D}'(\overline M, T^{*0,q}M'),\ \  v\in C^\infty(X, T^{*0,q}M'),\]
where $\hat{\mathscr D}'(\overline M, T^{*0,q}M')$ denotes the space of continuous linear map from $\Omega^{0,q}(\ol M)$ to $\Complex$ with respect to $(\,\cdot\,|\,\cdot\,)_M$.
It is well-known (see~\cite[page 30]{B71}) that $\tilde P^\ast$ is continuous: $\tilde P^\ast: W^s(\ol M,T^{*0,q}M')\rightarrow W^{s+\frac{1}{2}}(X,T^{*0,q}M')$ and
\[\tilde P^\ast: \Omega^{0,q}(\overline M)\rightarrow C^\infty(X, T^{*0,q}M'),\]
for every $s\in\mathbb R$.


It is well-known that the operator
\[\tilde P^\ast\tilde P:C^\infty(X,T^{*0,q}M')\To C^\infty(X,T^{*0,q}M')\]
is a classical elliptic pseudodifferential operator of order $-1$ and invertible since $\tilde P$ is
injective~(see~\cite{B71}). Moreover, the operator
\[(\tilde P^\ast\tilde P)^{-1}:C^\infty(X,T^{*0,q}M')\To C^\infty(X,T^{*0,q}M')\]
is a classical elliptic pseudodifferential operator of order $1$.

When $q=0$, we simply write $P$, $P^*$ to denote $\tilde P$, $\tilde P^\ast$ respectively.

We define a new inner product on $W^{-\frac{1}{2}}(X,T^{*0,q}M')$ as follows:
\begin{equation}\label{inner product}
[\,u\,|\,v\,]=(\,\tilde Pu\,|\,\tilde Pv)_{M},\ \ u, v\in W^{-\frac{1}{2}}(X,T^{*0,q}M').
\end{equation}
Let
\begin{equation}\label{e-gue171011pm}
Q: W^{-\frac{1}{2}}(X,T^{*0,q}M')\rightarrow{\rm Ker\,}\ddbar\rho^{\wedge,\star}
\end{equation}
be the orthogonal projection onto ${\rm Ker\,}\ddbar\rho^{\wedge,\star}$ with respect to $[\,\cdot\,|\,\cdot\,]$.

We consider the following operator
\begin{equation}\label{e-gue171011pmI}
\ddbar_{\beta}=Q\gamma\ddbar \tilde P:\Omega^{0,q}(X)\rightarrow\Omega^{0,q+1}(X).
\end{equation}
The operator $\ddbar_\beta$ was introduced by the first author in~\cite{Hsiao08}.
Let $\ddbar^\dagger_\beta:\Omega^{0,q+1}(X)\To\Omega^{0,q}(X)$ be the formal adjoint with respect to $[\,\cdot\,|\,\cdot\,]$. We recall the following (see~\cite[Part II, Lemma 5.1, equation (5.3) and Lemma 5.2]{Hsiao08})

\begin{proposition}\label{t-gue171012}
	We have that $\ddbar_\beta$ and $\ddbar^\dagger_\beta$ are classical pseudodifferential operators of order $1$,
	\begin{equation}\label{e-gue171012a}
	\ddbar_{\beta}\circ \ddbar_{\beta}=0\ \ \mbox{on $\Omega^{0,q}(X)$}
	\end{equation}
	and
	\begin{equation}\label{e-gue171012aI}
	\begin{split}
	&\mbox{$\ddbar_{\beta}=\ddbar_{b}$+lower order terms},\\
	&\mbox{$\ddbar_{\beta}^{\dagger}=\gamma\ddbar^{\star}_{f}\tilde P=\ddbar^{\star}_{b}$+lower order terms},
	\end{split}
	\end{equation}
 where $\ddbar_b$ is the tangential Cauchy Riemann operator on $X$ and $\ddbar^*_b$ is the adjoint of 
 $\ddbar_b$ with respect to $(\,\cdot\,|\,\cdot\,)_X$.
\end{proposition}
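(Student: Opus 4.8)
The plan is to realize $\ddbar_\beta=Q\gamma\ddbar\tilde P$ inside the Boutet de Monvel calculus of pseudodifferential boundary operators and to read off its order, its principal symbol, and the identity $\ddbar_\beta\circ\ddbar_\beta=0$ from the mapping properties of the Poisson operator $\tilde P$ recalled above. First I would use that $\tilde P$ is a potential (Poisson) operator gaining $\tfrac12$ in Sobolev order, $\tilde P:W^s(X,T^{*0,q}M')\to W^{s+\frac12}(\ol M,T^{*0,q}M')$, that $\ddbar$ is a first order differential operator, and that the trace $\gamma$ drops the order by $\tfrac12$; hence $\gamma\ddbar\tilde P:W^s(X)\to W^{s-1}(X)$ is continuous of order $1$ and, by the composition rules for Poisson operators with differential operators followed by the trace, it is a \emph{classical} pseudodifferential operator on $X$ of order $1$. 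It then remains to handle $Q$.

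To see that $Q$ is a classical pseudodifferential operator of order $0$, I would use that $[\,u\,|\,v\,]=(\tilde P^\ast\tilde P u\,|\,v)_X$ with $A:=\tilde P^\ast\tilde P$ an invertible elliptic pseudodifferential operator of order $-1$ (as recalled before \eqref{inner product}). Let $\pi$ be the pointwise (bundle) orthogonal projection onto the subbundle $\mathcal K=\Ker\ddbar\rho^{\wedge,\star}=T^{*0,q}X$, which is an operator of order $0$ with principal symbol $\pi$. The $[\,\cdot\,|\,\cdot\,]$-orthogonal projection $Q$ onto $\mathcal K$ is characterized by $Q^2=Q$, $\operatorname{Im}Q=\mathcal K$ and $(I-Q)$ mapping into the $A$-orthogonal complement of $\mathcal K$; solving these constraints expresses $Q$ through $\pi$, $I-\pi$, $A$ and the inverse of an elliptic order-$0$ block acting on $\mathcal K$, so $Q$ is a classical pseudodifferential operator of order $0$ with principal symbol $\pi$. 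Combining the two steps shows $\ddbar_\beta$ (and likewise $\ddbar_\beta^\dagger$) is a classical pseudodifferential operator of order $1$.

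For the identifications $\ddbar_\beta=\ddbar_b+\text{l.o.t.}$ and $\ddbar_\beta^\dagger=\gamma\ddbar^\star_f\tilde P=\ddbar_b^\star+\text{l.o.t.}$, I would compute the principal symbol of $\tilde P$ from the Boutet de Monvel factorization of $\tilde\Box_f^{(q)}$ in boundary normal coordinates $(x',\rho)$ (factoring the $\pr_\rho$-polynomial of its symbol and taking the decaying root), read off the symbol of $\gamma\ddbar\tilde P$, and compose with $Q$ (leading symbol $\pi$); the leading symbol is exactly the tangential part of the symbol of $\ddbar$, i.e. $\sigma(\ddbar_b)$, so $\ddbar_\beta-\ddbar_b$ has order $\le0$. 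For the adjoint, Green's formula $(\ddbar\tilde P u\,|\,\tilde P v)_M=(\tilde P u\,|\,\ddbar^\star_f\tilde P v)_M+(\text{boundary term in }\gamma\ddbar\rho^{\wedge,\star})$ together with the fact that $Q$ projects onto $\Ker\ddbar\rho^{\wedge,\star}$ kills the boundary contribution and yields $\ddbar_\beta^\dagger=\gamma\ddbar^\star_f\tilde P$, whose leading symbol is identified with $\ddbar_b^\star$ by the same computation.

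Finally, for $\ddbar_\beta\circ\ddbar_\beta=0$ the mechanism is $\ddbar^2=0$ combined with uniqueness of the harmonic extension: since $\tilde F^{(q)}$ is injective, $\tilde P\gamma=\mathrm{id}$ on $\Ker\tilde\Box_f^{(q)}$. For tangential $u$ put $\alpha=\tilde Pu$; then $\ddbar\alpha$ is $\tilde\Box_f$-harmonic up to the finite-rank correction $K^{(q)}$, so $\tilde P\gamma\ddbar\alpha=\ddbar\alpha$ modulo smoothing, and substituting gives $\ddbar_\beta^2u=Q\gamma\ddbar\,\ddbar\alpha-\ddbar_\beta(I-Q)\gamma\ddbar\alpha=-\ddbar_\beta(I-Q)\gamma\ddbar\alpha$ modulo smoothing, the first term vanishing by $\ddbar^2=0$. \textbf{The main obstacle} is precisely to show that this residual term, produced by the non-tangential boundary data $(I-Q)\gamma\ddbar\tilde P$, vanishes \emph{exactly} and not merely to leading order; I expect this to require the detailed normal-form analysis of $\tilde P$ near $X$ to control the $\gamma\ddbar\rho^{\wedge,\star}$-component together with the finite-rank $K^{(q)}$-corrections, which is the heart of the argument and is the content of \cite[Part II, Lemma 5.1]{Hsiao08}.
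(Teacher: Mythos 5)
Note first that the paper contains no proof of this proposition: it is recalled verbatim from \cite[Part II, Lemma 5.1, equation (5.3) and Lemma 5.2]{Hsiao08}, so the only meaningful comparison is with the memoir's argument, and your route is essentially that one. Realizing $\gamma\ddbar\tilde P$ in the Boutet de Monvel calculus as an order-$1$ classical pseudodifferential operator is correct and standard; your treatment of $Q$ is also the right mechanism and can be made exact: with $A=\tilde P^*\tilde P$ and $\pi$ the pointwise projection onto $\Ker\ddbar\rho^{\wedge,\star}$, one checks that $Q=\pi(\pi A\pi)^{-1}\pi A$ (with $(\pi A\pi)^{-1}$ the inverse on sections of the subbundle, which exists since $\pi A\pi$ is elliptic, formally self-adjoint and injective by injectivity of $\tilde P$), so $Q$ is a classical pseudodifferential operator of order $0$; its principal symbol is $\pi$ because $\sigma(A)$ is \emph{scalar} for the Laplace-type operator $\tilde\Box^{(q)}_f$ --- a point you use but do not verify. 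The leading-symbol identification with $\ddbar_b$ (substituting the decaying root and killing the $\ddbar\rho\wedge$ component by $\pi$) is also sound.

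The genuine shortfall is that the proposition asserts \emph{exact} identities while your argument delivers them only modulo smoothing operators. You flag this honestly for $\ddbar_\beta\circ\ddbar_\beta=0$, correctly isolating the residual term $Q\gamma\ddbar\tilde P(I-Q)\gamma\ddbar\tilde P$ and the role of the finite-rank correction $K^{(q)}$; but the same caveat applies, unflagged, to the adjoint formula. Your Green's-formula derivation of $\ddbar_\beta^\dagger=\gamma\ddbar^{\star}_f\tilde P$ silently uses both $\tilde P\gamma\ddbar\tilde Pu=\ddbar\tilde Pu$ and $\tilde P\gamma\ddbar^{\star}_f\tilde Pv=\ddbar^{\star}_f\tilde Pv$, each of which holds only modulo the $K^{(q)}$-corrections (one has $\Box_f\ddbar\tilde Pu=-\ddbar K^{(q)}\tilde Pu$, not $0$), and you do not address why $\gamma\ddbar^{\star}_f\tilde Pv$ lies in $\Ker\ddbar\rho^{\wedge,\star}$ --- a priori the anticommutator of $\ddbar^{\star}_f$ with $\ddbar\rho^{\wedge,\star}$ is a nonvanishing bundle map, so the tangentiality of the right-hand side needs justification (or the insertion of a projection) before it can even be a candidate for the adjoint among tangential forms. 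Deferring precisely these points to \cite{Hsiao08} is consistent with what the paper itself does, but as a standalone proof they remain open, and they are exactly the content of the cited lemmas rather than routine symbol calculus.
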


Let 
\[\Box^{(0)}_\beta:=\ddbar^\dagger_\beta\,\ddbar_\beta: \Omega^{0,q}(X)\To\Omega^{0,q}(X).\]
Let $D\subset X$ be an open set. Assume that the Levi form is positive  on $D$. By the same arguments of \cite[PartII, Chapter 7]{Hsiao08}, we can show that there are continuous operators 
$N,\hat{S}:C^\infty_c(D)\to C^\infty(D)$, which are properly supported on $D$, such that
\begin{equation}\label{e-121}
\begin{split}
\Box_\beta^{(0)}N+\hat{S}&\equiv I \ \text{on} \ D\times D, \\
\Box_\beta^{(0)}\hat{S}&\equiv 0 \ \text{on} \ D\times D.
\end{split}
\end{equation}
Moreover, $N$ is a pseudodifferential operator of order $-1$ type $(\frac{1}{2},\frac{1}{2})$ on $D$. $\hat S$ is a pseudodifferential operator of order $0$ type $(\frac{1}{2},\frac{1}{2})$ on $D$ and for any local coordinate patch $D_0\subset D$, we have 
\begin{equation}\label{e-122}
\mbox{$\hat{S}(x,y)\equiv\int_0^\infty e^{it\varphi(x,y)}s(x,y,t)dt$ on $D_0\times D_0$}, 
\end{equation}
where $\varphi\in C^\infty(D_0\times D_0)$ is the phase function as in~\cite{HM14}, $s(x,y,t)\sim \sum_{j=0}^{+\infty}s_j(x,y)t^{n-1-j}$ in $S^{n-1}_{1,0}(D_0\times D_0\times\mathbb{R}_+)$, $s_j(x,y)\in C^\infty(D_0\times D_0)$, $j=0,1,\ldots$, $s_0(x,x)=\frac{1}{2}\pi^{-n}|\det\mathcal{L}_x|$, for all $x\in D_0$, where $\det \mathcal{L}_x:=\lambda_1(x)\cdots\lambda_n(x)$, $\lambda_j(x)$, $j=1,\ldots,n$, are eigenvalues of $\mathcal{L}_x$ with respect to $\langle\,\cdot\,|\,\cdot\,\rangle$. 

Let $U$ be an open neighborhood of $\mu^{-1}(0)\cap X$ in $M'$ such that $U=GU=\{gx|g\in G, x\in U\}$. Set $D=U\cap X$, then $D=GD$.
Let $\hat{S}_G:C^\infty_c(D)\to C^\infty(D)$ be the continuous operator with distribution kernel
\begin{equation}\label{e-123}
\hat{S}_G(x, y):=\int_G \hat{S}(x,gy)d\mu(g),
\end{equation}
where $d\mu$ is the Haar measure on $G$ with $\int_Gd\mu=1$. 
Then by using the method of ~\cite{HH}, we conclude that for any local coordinate patch $D_0\subset D$, if the Levi form is negative on $D_0$, then $\hat S_G\equiv0$ on $D_0\times D_0$. If the Levi form is positive on $D_0$, then  
\begin{equation}\label{e-gue230319ycdp}
\mbox{$\hat{S}_G(x,y)\equiv\int_0^\infty e^{it\Phi(x,y)}a(x,y,t)dt$ on $D_0\times D_0$},
\end{equation}
where $\Phi\in C^\infty(D_0\times D_0)$ is the phase as in~\cite[Theorem 1.5]{HH} and 
\begin{equation}\label{e-gue230323yyd}
\begin{split}
&\mbox{$a(x,y,t)\sim\sum^{+\infty}_{j=0}a_j(x,y)t^{n-1-\frac{d}{2}-j}$ in $S^{n-1-\frac{d}{2}}_{1,0}(D_0\times D_0\times\mathbb R_+)$},\\
&\mbox{$a_j(x,y)\in C^\infty(D_0\times D_0)$, $j=0,1,\ldots$},\\
&\mbox{$a_0(x,x)$ is given by~\cite[Theorem 1.6]{HH}}.
\end{split}
\end{equation}



\section{$G$-invariant Bergman kernel asymptotics} 

We first introduce $G$-invariant Szeg\H{o} projection. 
Let $\ddbar_{b,G}:\Omega^{0,q}(X)^G\To\Omega^{0,q+1}(X)^G$ 
be the tangential Cauchy-Riemann operator on $X$ acting on $\Omega^{0,q}(X)^G$. 
We extend $\ddbar_{b,G}$ to $L^2_{(0,q)}(X)^G$: 
\begin{equation}\label{eq:2.14}
\begin{split}
&{\rm Dom\,}\ddbar_{b,G}=\set{u\in L^2_{(0,q)}(X)^G;\, 
\ddbar_{b,G}u\in L^2_{(0,q+1)}(X)^G}\,, \\
&\qquad \ddbar_{b,G}: {\rm Dom\,}\ddbar_{b,G}\ni u\longmapsto
\ddbar_{b,G}u\in L^2_{(0,q+1)}(X)^G,
\end{split}
\end{equation}
where $\ddbar_{b,G}u$ is defined in the sense of distributions. 
Let 
\[\ddbar^*_{b,G}: {\rm Dom\,}\ddbar^*_{b,G}\subset L^2_{(0,q+1)}(X)^G\To L^2_{(0,q)}(X)^G\]
be the $L^2$ adjoint of $\ddbar_{b,G}$. Let 
\begin{equation}\label{e-gue230318yydm}
\Box^{(0)}_{b,G}=\ddbar^*_{b,G}\,\ddbar_{b,G}: {\rm Dom\,}\Box^{(0)}_{b,G}\subset L^2(X)^G\To L^2(X)^G,
\end{equation}
where ${\rm Dom\,}\Box^{(0)}_{b,G}=\set{u\in L^2(X)^G;\, u\in{\rm Dom\,}\ddbar_{b,G}, \ddbar_{b,G}u\in{\rm Dom\,}\ddbar^*_{b,G}}$. Let 
\begin{equation}\label{e-gue230318yydn}
S_G: L^2(X)\To{\rm Ker\,}\Box^{(0)}_{b,G}\subset L^2(X)^G
\end{equation}
be the orthogonal projection ($G$-invariant Szeg\H{o} projection). It was proved in~\cite[Theorem 3.17]{HMM} that $\Box^{(0)}_{b,G}$ has closed range. Let 
\[N_b: L^2(X)^G\To{\rm Dom\,}\Box^{(0)}_{b,G}\]
be the partial inverse of $\Box^{(0)}_{b,G}$. We have 
\begin{equation}\label{e-gue230318yydp}
\begin{split}
\mbox{$N_b\Box^{(0)}_{b,G}+S_G=I$ on ${\rm Dom\,}\Box^{(0)}_{b,G}$},\\
\mbox{$\Box^{(0)}_{b,G}N_b+S_G=I$ on $L^2(X)^G$}.
\end{split}
\end{equation}
Recall that $B_G$ is the $G$-invariant Bergman projection (see \eqref{e-gue230318yydh}). 
We can now prove 

\begin{theorem}\label{t-027}
Let $\tau\in C^\infty(\ol M)$ with ${\rm supp\,}\tau\cap\mu^{-1}(0)\cap X=\emptyset$. 
Then, $\tau B_G\equiv0\mod C^\infty(\ol M\times\ol M)$, $B_G\tau\equiv0\mod C^\infty(\ol M\times\ol M)$.
\end{theorem}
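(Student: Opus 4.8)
The plan is to prove the equivalent statement that the kernel $B_G(x,y)$ is smooth on $(\ol M\times\ol M)\setminus(\Sigma\times\Sigma)$, where $\Sigma:=\mu^{-1}(0)\cap X$. Indeed, if $\operatorname{supp}\tau\cap\Sigma=\emptyset$ then the kernel $\tau(x)B_G(x,y)$ of $\tau B_G$ is supported in $\{x\notin\Sigma\}\times\ol M$, which is disjoint from $\Sigma\times\Sigma$; and since $B_G$ is self-adjoint, $B_G(x,y)=\ol{B_G(y,x)}$, so the statement for $B_G\tau$ follows from that for $\tau B_G$ by interchanging the two variables. The interior is immediate: every element of the range of $B_G$ lies in $H^0(\ol M)^G$ and is holomorphic, hence smooth on $M$, so $B_G(x,y)$ is already smooth whenever $x\in M$ or $y\in M$. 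Thus only boundary points remain, and there are two regimes: (a) smoothness in $x$ near a point of $X\setminus\Sigma$, and (b) smoothness in $y$ as $y\to\Sigma$ while $x$ stays in $X\setminus\Sigma$.

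For regime (a) I would run the $G$-invariant a priori estimates of Section~\ref{s-gue171006} at form degree $q=0$. Near $p\in X$ with $\mu(p)\neq0$ there is $\xi\in\mathfrak g$ with $\langle\omega_0,\xi_{M'}\rangle\neq0$ on a small $\ol U$; since every $f\in H^0(\ol M)^G$ is $G$-invariant we have $\xi_{M'}f=0$, so \eqref{e-gue230317ycd} expresses the characteristic derivative $Tf$ through the tangential fields $L_j,\ol L_j$, and together with $\ddbar f=0$ and \eqref{e-gue171008lv} this yields the full local elliptic estimate $\norm{\chi f}_{k,\ol M}\le C_k(\norm{\Box^{(0)}_Gf}_{k-1,\ol M}+\norm{f}_M)$ exactly as in Lemma~\ref{t-021} and Lemma~\ref{l-02}. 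Applying it to $f=B_Gu$, where $\Box^{(0)}_GB_Gu=0$, and bootstrapping through the elliptic regularisation behind Theorem~\ref{t-02}, I obtain $\norm{\chi B_Gu}_{k,\ol M}\le C_k\norm{u}_M$ for every $k$, i.e. $\chi B_G:L^2(M)\to C^\infty(\ol M)$ is continuous for every cut-off $\chi$ supported in $\ol M\setminus\Sigma$. Feeding this into the idempotency $B_G=B_G^2$ and self-adjointness, for cut-offs $\chi,\chi'$ supported in $\ol M\setminus\Sigma$ one has $\chi B_G\chi'=(\chi B_G)(\chi' B_G)^\ast:W^{-k}\to W^{k}$ for all $k$, so $B_G(x,y)$ is smooth on $(\ol M\setminus\Sigma)\times(\ol M\setminus\Sigma)$.

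The hard part is regime (b): the one-sided bound $\chi B_G:L^2\to C^\infty$ controls only the $x$-variable, and because $B_G$ has a genuine singularity along $\operatorname{diag}(\Sigma)$ it cannot by itself yield smoothness of $y\mapsto B_G(x_0,y)$ as $y$ approaches $\Sigma$. To reach this I would reduce $B_G$ to the boundary. Using the Poisson operator $P=\tilde P$ for $q=0$ together with the factorisation \eqref{e-gue230318yyda}, I expect to establish, modulo $C^\infty(\ol M\times\ol M)$, a relation of the form $B_G\equiv P\,A_G\,P^\ast$, where $A_G$ is a boundary operator built out of the $G$-invariant Szeg\H{o} projection $S_G$; since $\ddbar_\beta=\ddbar_b+\text{(lower order)}$ by Proposition~\ref{t-gue171012}, $A_G$ agrees with the $\Box^{(0)}_\beta$-parametrix $\hat S_G$ of \eqref{e-121}--\eqref{e-gue230319ycdp} up to smoothing, while the finite-dimensional term $K^{(0)}$ of \eqref{e-gue171011} contributes only a smooth error.

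Once this reduction is in place the localisation is supplied by Section~\ref{s-gue171010}: on any coordinate patch where the Levi form is negative $\hat S_G\equiv0$, while where it is positive $\hat S_G$ is the complex Fourier integral operator \eqref{e-gue230319ycdp} whose phase $\Phi$ is non-degenerate and whose diagonal singularities are concentrated on $\operatorname{diag}(\Sigma)$; hence $A_G(x',y')$ is smooth off $\operatorname{diag}(\Sigma)$, in particular whenever $x'\notin\Sigma$. As $P$ and $P^\ast$ are continuous on the full Sobolev scale and are smoothing off the boundary, propagating this boundary smoothness through $P\,A_G\,P^\ast$ gives smoothness of $B_G(x,y)$ for $x\in X\setminus\Sigma$ and $y$ near $\Sigma$, which closes regime (b) and hence the theorem. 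I expect the genuine work to be the reduction-to-the-boundary identity $B_G\equiv P\,A_G\,P^\ast$ and the careful accounting of the Poisson and finite-rank error terms; the transfer of the boundary localisation of $S_G$ to $B_G$ is then formal.
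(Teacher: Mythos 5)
There is a genuine gap, in fact two, one in each of your regimes. In regime (a) you invoke Lemma~\ref{t-021}, Lemma~\ref{l-02} and ``the elliptic regularisation behind Theorem~\ref{t-02}'' at form degree $q=0$, but all three are stated and proved only for $1\le q\le n-2$, and this restriction is not cosmetic: the subelliptic estimate \eqref{e1-23-3-4} near the strongly pseudoconvex part of $\mu^{-1}(0)\cap X$ has no analogue for functions, and the global estimate of Lemma~\ref{l-02} is provably \emph{false} at $q=0$ --- it would force ${\rm Ker\,}\Box^{(0)}_G=H^0(\ol M)^G\subset C^\infty(\ol M)$, contradicting the singularity of $B_G$ along ${\rm diag\,}((\mu^{-1}(0)\cap X)\times(\mu^{-1}(0)\cap X))$ established in Theorem~\ref{t-510}. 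Nor do the estimates localize as you assume: the Claim \eqref{e2-23-3-4} carries the \emph{global} norm $\norm{f}_{k-1,\ol M}$ on its right-hand side, which for $q\ge1$ is eventually absorbed using the subelliptic gain near $\mu^{-1}(0)\cap X$ --- exactly the ingredient missing at $q=0$. Your transversal-ellipticity mechanism ($G$-invariance controlling $T$ via \eqref{e-gue230317ycd}, holomorphy killing the $\ol L_j$, and $L_n=\ol L_n-i\sqrt{2}\,T$ recovering the normal direction) is sound in spirit, but turning it into $\chi B_G:L^2(M)\to C^\infty(\ol M)$ requires a nested-cutoff local estimate plus a \emph{local} regularization argument that is nowhere in the paper; it cannot be read off from the $q\ge1$ lemmas.

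Regime (b) is circular as proposed. The global reduction $B_G\equiv P\,A_G\,P^\ast\mod C^\infty(\ol M\times\ol M)$ with $A_G$ agreeing with $\hat S_G$ is precisely Theorem~\ref{t-222}, whose proof in the paper \emph{uses} Theorem~\ref{t-027}; and it cannot be established first, because $\hat S$ and $\hat S_G$ in \eqref{e-121}--\eqref{e-gue230319ycdp} are parametrices only on neighborhoods $D$ of $\mu^{-1}(0)\cap X$ where the Levi form is definite. Assumption~\ref{a-gue2204300124} imposes nothing on the Levi form elsewhere on $X$, so your dichotomy ``on any patch the Levi form is negative or positive'' does not cover the boundary, and away from $\mu^{-1}(0)\cap X$ no microlocal description of the boundary reduction is available --- that absence of singularity there is the very content of the theorem. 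The paper avoids both problems by a different route: the \emph{exact} reproducing identity $B_G=PS_G(P^\ast P)^{-1}P^\ast B_G$ (equations \eqref{028}--\eqref{0211}), which needs no parametrix; then, for $B_G\tau$, inserting a cut-off $\tilde\tau$ vanishing on $\mu^{-1}(0)\cap X$ and using the already-known smoothing property $S_G\tilde\tau\equiv0$ of the $G$-invariant Szeg\H{o} projection away from $\mu^{-1}(0)\cap X$ (\cite[Theorem 3.18]{HMM}), the pseudolocality $(1-\tilde\tau)P^\ast\tau\equiv0\mod C^\infty(X\times\ol M)$ (\cite[Lemma 4.1]{HM19}), and --- crucially --- the global regularity $B_G:C^\infty(\ol M)\to C^\infty(\ol M)$ of Theorem~\ref{t-gue230318yyd}, which is extracted from the $q=1$ estimates through \eqref{e-gue230318yyda} and thereby sidesteps $q=0$ boundary regularity altogether. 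You identified the correct singular locus and the correct heuristic (the $G$-average supplies ellipticity transverse to $\mu^{-1}(0)$), but both pillars of your outline need these inputs, which your plan neither supplies nor replaces.
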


\begin{proof}
Denote by $\gamma$ the restriction from $M'$ to $X$. It follows from the definition of $\ddbar_b$ that $\ddbar_{b,G}\gamma B_G=0$ and $\Box^{(0)}_{b,G}\gamma B_G=0$.
From this observation and \eqref{e-gue230318yydp}, we have 
\begin{equation}\label{028}
\gamma B_G=(N_b\Box^{(0)}_{b,G}+S_G)\gamma B_G=S_G \gamma B_G, 
\end{equation}
hence
\begin{equation}\label{029}
PS_G \gamma B_G=P\gamma B_G=B_G
\end{equation}
and 
\begin{equation*}
P^\ast B_G=P^\ast P\gamma B_G.
\end{equation*}
We dedeuce that 
\begin{equation}\label{0210}
\gamma B_G=(P^\ast P)^{-1}P^\ast B_G.
\end{equation}
By \eqref{029} and \eqref{0210}, we deduce that
\begin{equation}\label{0211}
B_G=PS_G(P^\ast P)^{-1}P^\ast B_G.
\end{equation}
Assume that $\tau\in C^{\infty}(\ol M), {\rm supp\,}\tau\cap\mu^{-1}(0)\cap X=\emptyset$. 
We have 
\begin{equation}\label{0212}
\tau B_G=\tau PS_G(P^\ast P)^{-1}P^\ast B_G.
\end{equation}
Let $\tilde\tau\in C^{\infty}(X)$, $\tilde{\tau}=1$ near ${\rm supp\,}\tau\cap X$ and $\tilde{\tau}=0$ on $\mu^{-1}(0)\cap X$. 
Taking adjoint in \eqref{0212}, we have
\begin{equation}\label{e-gue230306yyd}
\begin{split}
B_G\tau&=B_G P(P^\ast P)^{-1}S_GP^\ast\tau\\
&=B_G P(P^\ast P)^{-1}S_G\tilde{\tau}P^\ast\tau+B_G P(P^\ast P)^{-1}S_G(1-\tilde{\tau})P^\ast\tau.
\end{split}
\end{equation}
By \cite[Theorem 3.18]{HMM}, we know that 
\begin{equation}\label{e-gue230319yyd}
S_G\tilde{\tau}\equiv0.
\end{equation}
Moreover, from~\cite[Lemma 4.1]{HM19}, we have 
\begin{equation}\label{e-gue230319yydI}
(1-\tilde{\tau})P^\ast\tau\equiv0\mod C^\infty(X\times\ol M). 
\end{equation} 
From \eqref{e-gue230306yyd}, \eqref{e-gue230319yyd}, \eqref{e-gue230319yydI} and notice that 
$B_G: C^\infty(\ol M)\To C^\infty(\ol M)^G$ is continuous (see Theorem~\ref{t-gue230318yyd}), we deduce that 
\[B_G\tau: W^s(\ol M)\To C^\infty(\ol M),\]
for all $s\in\mathbb R$. Hence $B_G\tau\equiv0\mod C^\infty(\ol M\times\ol M)$. By taking adjoint, we deduce that $\tau B_G\equiv0\mod C^\infty(\ol M\times\ol M)$. The theorem follows. 
\end{proof}

We now study the distribution kernel of $B_G$ near $\mu^{-1}(0)\cap X$. For a Borel set $\Sigma\subset\mathbb{R}$, denote by $E(\Sigma)$ the spectral projection
of $\Box^{(0)}$ associated to $\Sigma$, where $E$ is the spectral measure of $\Box^{(0)}$. Set $H^0_{\leq\lambda}(\ol M):=Ran E\big((-\infty,\lambda]\big)$.
Let $B^{(0)}_{\leq\lambda}$ be the orthogonal projection onto $H^0_{\leq\lambda}(\ol M)$ and let $B^{(0)}(x,y)\in\mathscr D'(M\times M)$ be the distribution kernel of $B^{(0)}_{\leq\lambda}$. 
First we recall the following theorem~\cite[Theorem 1.1]{HM19}. 

\begin{theorem}\label{t-028}
Let $U$ be an open set of $M'$ with $U\cap X\neq\emptyset$. Assume that the Levi form is negative on $U\cap X$, then $B^{(0)}_{\leq\lambda}\equiv0\mod C^\infty((U\times U)\cap(\ol M\times\ol M))$. Assume that the Levi form is positive on $U\cap X$. We have
\begin{equation}\label{e-gue230319ycdm}
	B^{(0)}_{\leq\lambda}\equiv\int_0^{\infty}e^{it\phi(x,y)t}b(x,y,t)dt \ \text{mod} \ C^{\infty}\big((U\times U)\cap(\ol M\times\ol M)\big),
\end{equation}
where
\begin{equation}\label{e-gue230319ycdn}
\begin{split}
	&b(x,y,t)\sim\sum_{j=0}^\infty b_j(x,y)t^{n-j}\in S^n_{1,0}\big((U\times U)\cap(\ol M\times\ol M)\times]0,\infty[\big),\\
 &b_j(x,y)\in C^\infty((U\times U)\cap(\ol M\times\ol M)),\ \ j=0,1,\ldots,\\
 &b_0(x,y)\neq0,\\
 &\phi\in C^\infty((U\times U)\cap(\ol M\times\ol M)),\ \ {\rm Im\,}\phi\geq0,\\
 &\mbox{$\phi(x,x)=0$, $x\in U\cap X$, $\phi(x,y)\neq0$ if $(x,y)\notin{\rm diag\,}((U\times U)\cap(X\times X))$},\\
 &\mbox{${\rm Im\,}\phi(x,y)>0$ if $(x,y)\notin(U\times U)\cap(X\times X)$},\\
 &d_x\phi(x,x)=-\omega_0(x)-id\rho(x),\ \ 
 d_y\phi(x,x)=\omega_0(x)-id\rho(x),\ \ 
 \mbox{for every $x\in U\cap X$}. 
 \end{split}
\end{equation}
\end{theorem}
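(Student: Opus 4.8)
The plan is to reduce the analysis of $B^{(0)}_{\leq\lambda}$ near the boundary to a pseudodifferential problem on $X$ by means of the Poisson operator $P$ of Section~\ref{s-gue171010}, and then to transport the microlocal structure of the boundary operator $\hat S$ back into $M$. The starting observation is that, modulo $C^\infty$ errors, every element of $H^0_{\leq\lambda}(\ol M)$ is recovered from its boundary restriction $\gamma u$ by the Poisson extension, $u\equiv P\gamma u$, while $\gamma u$ is governed by the boundary Laplacian $\Box^{(0)}_\beta=\ddbar^\dagger_\beta\ddbar_\beta$. Combining $\gamma=(P^*P)^{-1}P^*$ on the relevant range with the parametrix identities \eqref{e-121}, one obtains a factorization of the form
\[
B^{(0)}_{\leq\lambda}\equiv P\,\hat S\,(P^*P)^{-1}P^*\mod C^\infty\big((U\times U)\cap(\ol M\times\ol M)\big),
\]
so that the whole problem is transferred to understanding $\hat S$ and its extension by $P$ and $P^*$.

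With the factorization in hand, the negative Levi case is immediate: the construction following \eqref{e-121} gives $\hat S\equiv0$ on $U\cap X$ when the Levi form is negative there, whence $B^{(0)}_{\leq\lambda}\equiv0$. In the positive Levi case one inserts the explicit complex Fourier integral representation \eqref{e-122} of $\hat S$, with phase $\varphi$ on $X$ and symbol $s\sim\sum_j s_j\,t^{n-1-j}$, and the task becomes one of propagation.

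The technical heart is to propagate \eqref{e-122} from $X$ into $\ol M$ through $P$ and $P^*$. Here $P$ is itself a Fourier integral operator with a complex-valued phase solving the eikonal equation attached to the elliptic boundary value problem $\tilde\Box^{(0)}_fPu=0$, $\gamma Pu=u$; composing it on the left and $P^*$ on the right with the boundary oscillatory integral and applying stationary phase in the fibre variables produces an integral $\int_0^\infty e^{it\phi(x,y)}b(x,y,t)\,dt$ on $(U\times U)\cap(\ol M\times\ol M)$. The resulting phase $\phi$ is the almost-analytic extension of $\varphi$ off the boundary diagonal; one verifies $\phi|_{X\times X}=\varphi$, ${\rm Im\,}\phi\geq0$, $\phi(x,x)=0$ for $x\in U\cap X$, and computes the diagonal differentials $d_x\phi(x,x)=-\omega_0(x)-id\rho(x)$ and $d_y\phi(x,x)=\omega_0(x)-id\rho(x)$ from the way $\omega_0$ and $d\rho$ transport under the Poisson extension. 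The leading coefficient $b_0(x,y)\neq0$ in \eqref{e-gue230319ycdn} is then read off from $s_0$ together with the stationary-phase Jacobian.

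The main obstacle is exactly this last propagation step: controlling $\phi$ globally on $(U\times U)\cap(\ol M\times\ol M)$ and establishing the strict positivity ${\rm Im\,}\phi(x,y)>0$ for $(x,y)\notin(U\times U)\cap(X\times X)$, as required in \eqref{e-gue230319ycdn}. This forces one to work systematically within the calculus of Fourier integral operators with complex phase functions of positive type, to construct $P$ as such an operator, and to carry out a stationary-phase expansion along a degenerate complex critical manifold; the spectral truncation $\leq\lambda$ enters only through contributions that are smoothing near the boundary, since the boundary analysis localizes to the characteristic set of $\Box^{(0)}_\beta$.
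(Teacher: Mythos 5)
Your proposal follows essentially the same route as the actual source of this statement: Theorem~\ref{t-028} is not proved in the paper but recalled verbatim from \cite[Theorem 1.1]{HM19}, and the proof there---which the paper itself reuses later in this section, cf.\ the factorization $B_{\leq\lambda_0}\equiv P\hat S L$ with $L\equiv(P^*P)^{-1}P^*$ in \eqref{e-gue230319yydk} and the passage from the boundary into $\ol M$ via \cite[Part II, Propositions 7.8 and 7.9]{Hsiao08} in Theorem~\ref{t-gue230319yydn}---is precisely your reduction-to-boundary factorization through the Poisson operator followed by Melin--Sj\"ostrand stationary phase for complex phases of positive type. You correctly identify both the negative-Levi case (smoothing of $\hat S$) and the genuine technical burden, namely constructing the extended phase $\phi$ with ${\rm Im\,}\phi>0$ off $(U\times U)\cap(X\times X)$ and handling the spectral truncation, so your outline matches the cited argument.
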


We refer the reader to~\cite[Theorem 5.26]{HM19} for more properties of the phase $\phi$ in \eqref{e-gue230319ycdm}. We also refer the reader to~\cite[(5.121)]{HM19} for the explicit formula for $b_0(x,x)$ in \eqref{e-gue230319ycdn}. 

From Corollary~\ref{c-02}, we know that $\Box^{(0)}_G$ has closed range. From this observation, we can repeat the proof in
~\cite[Theorem 3.17]{HMM} and deduce that there exists a constant $\lambda_0>0$ such that 
\begin{equation}\label{e-gue230319yydh}
\begin{split}
&B_G=B^{(0)}_{\leq\lambda_0}\circ Q_G\ \ \mbox{on $L^2(M)$},\\ 
&B_G(x,y)=\int_GB_{\leq\lambda_0}(x,gy)d\mu(g).
\end{split}
\end{equation}
We recall some results in~\cite{HM19}. Fix $p\in\mu^{-1}(0)$ 
and let $U$ be an open set of $p$ in $M'$ with $U=GU$. Let $D:=U\cap X$. Assume the Levi form is positive on $D$. Let 
\[L: C^\infty_c(U\cap\ol M)\To C^\infty(D)\]
be a continuous operator such that 
\begin{equation}\label{e-gue230319yydi}
L-(P^*P)^{-1}P^*\equiv0\mod C^\infty((U\times U)\cap(X\times\ol M)), 
\end{equation}
$L$ is properly supported on $U\cap\ol M$, that is, for every $\chi\in C^\infty_c(U\cap\ol M)$, there is a $\tau\in C^\infty_c(D)$ such that 
$L\chi=\tau L$ on $C^\infty_c(U\cap\ol M)$ and for every $\tau_1\in C^\infty_c(D)$, there is a $\chi_1\in C^\infty_c(U\cap\ol M)$ such that 
$\tau_1L=L\chi_1$ on $C^\infty_c(U\cap\ol M)$ (see the discussion after~\cite[Theorem 5.18]{HM19}). Since $U$ and $D$
 are $G$-invariant, we can take $L$ so that 
 \begin{equation}\label{e-gue230319yydj}
\mbox{$LQ_G=Q_{G,X}L$ on $C^\infty_c(U\cap\ol M)$},
 \end{equation}
where $Q_{G,X}$ is the orthogonal projection from $L^2(X)$ onto $L^2(X)^G$. It was shown in~\cite[(5.111), Theorem 6.11]{HM19} that
\begin{equation}\label{e-gue230319yydk}
B_{\leq\lambda_0}-P\hat SL\equiv0\mod C^\infty((U\times U)\cap(\ol M\times\ol M)),
\end{equation}
where $\hat S$ is as in \eqref{e-121}. From \eqref{e-gue230319yydh} and \eqref{e-gue230319yydk}, we deduce 
\begin{equation}\label{e-gue230319yyds}
B_G-P\hat SLQ_G\equiv0\mod C^\infty((U\times U)\cap(\ol M\times\ol M)).
\end{equation}
From \eqref{e-gue230319yydj} and \eqref{e-gue230319yyds}, we get 

\begin{theorem}\label{t-gue230319yydm}
With the notations and assumptions used above, we have 
\begin{equation}\label{e-gue230319yyydt}
B_G-P\hat S_GL\equiv0\mod C^\infty((U\times U)\cap(\ol M\times\ol M)),
\end{equation}
where $\hat S_G$ is as in \eqref{e-123}. 
\end{theorem}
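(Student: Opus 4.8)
The plan is to reduce the assertion \eqref{e-gue230319yyydt} to the relation \eqref{e-gue230319yyds} that was just established, by identifying the group-averaged operator $\hat S_G$ with the honest operator composition $\hat S\circ Q_{G,X}$. The whole argument is then purely algebraic, so the only thing requiring care is setting up this averaging identity correctly.

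First I would verify the operator identity
\[
\hat S_G=\hat S\, Q_{G,X}\quad\text{on }C^\infty_c(D),
\]
where $Q_{G,X}$ is the orthogonal projection from $L^2(X)$ onto $L^2(X)^G$. This follows directly from the definition \eqref{e-123}: writing
\[
(\hat S_G u)(x)=\int_D\Bigl(\int_G\hat S(x,g\circ y)\,d\mu(g)\Bigr)u(y)\,dv_X(y)
\]
and performing, in the inner $y$-integral, the change of variables $w=g\circ y$, one uses that the volume form $dv_X$ is $G$-invariant (it is induced by the $G$-invariant metric $\langle\,\cdot\,|\,\cdot\,\rangle$) and that $D=U\cap X$ is $G$-invariant, so the domain of integration is unchanged. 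After interchanging the order of integration, the inner integral becomes $\int_G u(g^{-1}\circ w)\,d\mu(g)$, which by inversion-invariance of the Haar measure equals $\int_G u(g\circ w)\,d\mu(g)=(Q_{G,X}u)(w)$, the averaging projection onto $G$-invariants. This yields exactly $\hat S_G=\hat S\,Q_{G,X}$. Here I would note that since $G$ is compact and $D=GD$, the projection $Q_{G,X}$ preserves $C^\infty_c(D)$, so the composition with the properly supported operator $\hat S$ is well defined.

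With this identity in hand, the conclusion is a one-line composition argument. Using the commutation relation \eqref{e-gue230319yydj}, namely $Q_{G,X}L=LQ_G$ on $C^\infty_c(U\cap\ol M)$, I would compute
\[
P\hat S_G L=P\hat S\,Q_{G,X}L=P\hat S\,LQ_G,
\]
and then invoke \eqref{e-gue230319yyds}, which states that $B_G-P\hat S LQ_G\equiv0\mod C^\infty((U\times U)\cap(\ol M\times\ol M))$, to obtain \eqref{e-gue230319yyydt} at once. There is no analytic obstacle in this step; the only point demanding attention is the bookkeeping of supports and domains in the identity $\hat S_G=\hat S\,Q_{G,X}$, specifically that the proper support of $\hat S$ together with the $G$-invariance of $D$ and of $dv_X$ ensures all compositions act on $C^\infty_c$ and that the group-averaging in \eqref{e-123} genuinely reproduces the projection $Q_{G,X}$ rather than an average against a single group element.
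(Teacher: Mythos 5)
Your proposal is correct and matches the paper's proof, which derives the theorem in one step from \eqref{e-gue230319yydj} and \eqref{e-gue230319yyds}; the averaging identity $\hat S_G=\hat S\,Q_{G,X}$ that you verify via the $G$-invariance of $dv_X$ and $D$ and inversion-invariance of the Haar measure is exactly the step the paper leaves implicit. Your careful bookkeeping of supports and the identification of group-averaging with the orthogonal projection $Q_{G,X}$ is a welcome expansion of the paper's terse argument, not a deviation from it.
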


By \eqref{e-gue230319ycdp} and applying the procedure in~\cite[Part II, Proposition 7.8, Theorem 7.9]{Hsiao08} we get 

\begin{theorem}\label{t-gue230319yydn}
Let $p\in\mu^{-1}(0)\cap X$. Let $U$ be an open local coordinate patch of $p$ in $M'$, $D:=U\cap X$. If the Levi form is negative on $D$, then $B_G\equiv0\mod C^\infty((U\times U)\cap(\ol M\times\ol M))$. Assume that the Levi form is positive on $D$. We have 
\begin{equation}\label{e-gue230319ycde}
B_G(z,w)\equiv\int^{+\infty}_0e^{it\Psi(z,w)}b(z,w,t)dt\mod C^\infty((U\times U)\cap(\ol M\times\ol M)),
\end{equation}
where 
\begin{equation}\label{e-gue230319ycdf}
\begin{split}
&b(z,w,t)\in S^{n-\frac{d}{2}}_{1,0}(((U\times U)\cap(\ol M\times\ol M))\times\mathbb R_+),\\
&b(z,w,t)\sim\sum^{+\infty}_{j=0}t^{n-\frac{d}{2}-j}b_j(z,w)\ \ \mbox{in $S^{n-\frac{d}{2}}_{1,0}(((U\times U)\cap(\ol M\times\ol M))\times\mathbb R_+)$},\\
&b_j(z,w)\in C^\infty((U\times U)\cap(\ol M\times\ol M)),\ \ j=0,1,2,\ldots, \\
&b_0(z,w)\neq0,
\end{split}
\end{equation}
and 
\begin{equation}\label{e-gue230319ycdg}
\begin{split}
&\Psi(z,w)\in C^\infty(((U\times U)\cap(\ol M\times\ol M))),\ \ {\rm Im\,}\Psi\geq0,\\
&\Psi(z,z)=0, \ z\in\mu^{-1}(0)\cap X,\\
&\mbox{${\rm Im\,}\Psi(z,w)>0$ if $(z,w)\notin{\rm diag\,}((\mu^{-1}(0)\cap D)\times(\mu^{-1}(0)\cap D))$},\\
&d_x\Psi(x,x)=-\omega_0(x)-id\rho(x),\ \ 
d_y\Psi(x,x)=\omega_0(x)-id\rho(x),\ \ x\in\mu^{-1}(0)\cap D,\\
&\mbox{$\Psi|_{D\times D}=\Phi$, $\Phi$ is as in \eqref{e-gue230319ycdp}}.
\end{split}
\end{equation} 

Moreover, let $z=(x_1,\ldots,x_{2n-1},\rho)$ be local coordinates of $M'$ defined near $p$ in $M'$ with $x(p)=0$ and $x=(x_1,\ldots,x_{2n-1})$ are local coordinates of $X$ defined near $p$ in $X$. Then, 
\begin{equation}\label{e-gue230319ycdaI}
\mbox{$\Psi(z,w)=\Phi(x,y)-i\rho(z)(1+f(z))-i\rho(w)(1+\ol f(w))+O(\abs{(z,w)}^3)$ near $(p,p)$},
\end{equation}
where $f\in C^\infty$, $f=O(\abs{z})$. 
\end{theorem}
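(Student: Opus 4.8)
The plan is to combine the reduction-to-the-boundary formula of Theorem~\ref{t-gue230319yydm}, which gives $B_G\equiv P\hat S_GL\mod C^\infty((U\times U)\cap(\ol M\times\ol M))$, with the microlocal description of $\hat S_G$ in \eqref{e-gue230319ycdp}--\eqref{e-gue230323yyd}. The case where the Levi form is negative on $D$ is immediate: the discussion preceding \eqref{e-gue230319ycdp} shows that $\hat S_G\equiv0$ on $D_0\times D_0$, hence $P\hat S_GL\equiv0$ and therefore $B_G\equiv0\mod C^\infty((U\times U)\cap(\ol M\times\ol M))$. Thus all the work is in the positive case.

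For the positive case I would substitute the oscillatory integral \eqref{e-gue230319ycdp} for $\hat S_G$ into $P\hat S_GL$ and realize the composition as a single complex Fourier integral operator. Using \eqref{e-gue230319yydi} to replace $L$ by $(P^*P)^{-1}P^*$ modulo a smoothing error, the kernel of $B_G$ is, up to $C^\infty$, the composition of the Poisson operator $P$ in the output variable $z$, the boundary integral $\hat S_G$ carrying the phase $\Phi$, and $P^*$ (through $L$) in the input variable $w$. Both $P$ and $P^*$ propagate the boundary phase $\Phi(x,y)$ off $X$ into $\ol M$. Following the stationary phase scheme of~\cite[Part II, Proposition 7.8, Theorem 7.9]{Hsiao08}, applying $P$ to $e^{it\Phi(x,y)}a(x,y,t)$ in the boundary variable yields an oscillatory integral whose phase solves the eikonal equation for $\tilde\Box^{(0)}_f$ with Cauchy datum $\Phi$ along $X$; performing the symmetric extension through $P^*$ in the $w$-variable then produces the phase $\Psi(z,w)$ with $\Psi|_{D\times D}=\Phi$, as recorded in the last line of \eqref{e-gue230319ycdg}.

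The listed properties of $\Psi$ and $b$ then follow from this construction. The vanishing $\Psi(z,z)=0$ on $\mu^{-1}(0)\cap X$ and the diagonal differentials $d_x\Psi(x,x)=-\omega_0(x)-id\rho(x)$, $d_y\Psi(x,x)=\omega_0(x)-id\rho(x)$ are inherited from the corresponding boundary identities for $\Phi$ in \eqref{e-gue230319ycdp}, the normal component $-id\rho$ being exactly the datum forced by the eikonal equation of the Poisson extension. The nonnegativity ${\rm Im\,}\Psi\geq0$, strict off ${\rm diag\,}((\mu^{-1}(0)\cap D)\times(\mu^{-1}(0)\cap D))$, combines ${\rm Im\,}\Phi\geq0$ on the boundary with the damping of the Poisson extension into $M$: since $\rho<0$ in $M$, the leading normal contribution is $-i\rho(z)(1+f(z))-i\rho(w)(1+\ol f(w))$ with $f=O(\abs z)$, which gives the Taylor expansion \eqref{e-gue230319ycdaI} and a positive interior imaginary part $-\rho(z)-\rho(w)$. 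For the symbol, each Poisson extension contributes one extra power of $t$ from the integration in the $\rho$-direction transverse to $X$, so the boundary order $n-1-\frac d2$ of $a$ in \eqref{e-gue230323yyd} is raised to $n-\frac d2$; this yields $b(z,w,t)\in S^{n-\frac d2}_{1,0}$ with the expansion \eqref{e-gue230319ycdf}, and $b_0(z,w)\neq0$ follows from the nonvanishing of the leading boundary coefficient $a_0$.

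The main obstacle will be the rigorous phase propagation off the boundary, that is, constructing $\Psi$ as an almost analytic solution of the eikonal equation associated with $\tilde\Box^{(0)}_f$ carrying the degenerate complex phase $\Phi$ as Cauchy data, and verifying that ${\rm Im\,}\Psi$ remains nonnegative throughout $(U\times U)\cap(\ol M\times\ol M)$. The delicate point is that $\Phi$ already degenerates along the $G$-orbit directions inside $\mu^{-1}(0)\cap D$, so one must check that the transverse Hessian of the combined phase is nondegenerate and that the stationary phase reduction in the fibre variables of the Poisson symbol goes through; once this is established, the scheme of~\cite[Part II, Chapter 7]{Hsiao08} applies with only the bookkeeping changes caused by the $-\frac d2$ shift in order, and the expansion \eqref{e-gue230319ycde}--\eqref{e-gue230319ycdaI} follows.
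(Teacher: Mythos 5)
Your proposal matches the paper's proof essentially verbatim: the paper also disposes of the pseudoconcave case via $\hat S_G\equiv0$ on $D_0\times D_0$, and in the pseudoconvex case substitutes \eqref{e-gue230319ycdp} into the formula $B_G\equiv P\hat S_GL$ of Theorem~\ref{t-gue230319yydm} and invokes the procedure of~\cite[Part II, Proposition 7.8, Theorem 7.9]{Hsiao08}, exactly as you outline. One cosmetic quibble: your heuristic that ``each Poisson extension contributes one extra power of $t$'' would double-count (the net shift from composing $P$, $(P^*P)^{-1}$ and $P^*$ is $+1$ in total, raising $n-1-\frac{d}{2}$ to $n-\frac{d}{2}$), but since the actual bookkeeping is delegated to~\cite{Hsiao08} this does not affect the argument.
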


From ~\cite[Part II, Proposition 7.10]{Hsiao08} we have the following

\begin{theorem}\label{t-gue230319yydp}
With the notations and assumptions used above, for $b_0(z,w)$ 
in \eqref{e-gue230319ycdf}, we have 
\[b_0(x,x)=2a_0(x,x),\ \ \mbox{for every $x\in\mu^{-1}(0)\cap D$},\]
where $a_0$ is as in \eqref{e-gue230323yyd}. 
\end{theorem}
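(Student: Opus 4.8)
The expansion \eqref{e-gue230319ycde}, together with the phase $\Psi$ and the full symbol $b\sim\sum_j t^{n-\frac d2-j}b_j$, is already furnished by Theorem~\ref{t-gue230319yydn}; what remains is only to identify the leading coefficient $b_0$ on the diagonal. The plan is to extract $b_0(x,x)$ from the operator identity $B_G\equiv P\hat S_GL$ of Theorem~\ref{t-gue230319yydm}, in which the boundary-to-interior Poisson operator $P$ and the interior-to-boundary operator $L\equiv(P^*P)^{-1}P^*$ (see \eqref{e-gue230319yydi}) convert the $G$-invariant Szeg\H{o}-type kernel $\hat S_G$ into the $G$-invariant Bergman kernel. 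Since the expansion of $\hat S_G$ in \eqref{e-gue230319ycdp}--\eqref{e-gue230323yyd} has leading coefficient $a_0$, the task is to follow how $a_0$ propagates through this composition to the diagonal value $b_0(x,x)$.

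First I would recall that the phase $\Psi$ restricts to $\Phi$ on $X\times X$ and acquires the transversal terms $-i\rho(z)(1+f(z))-i\rho(w)(1+\ol f(w))$ near $(p,p)$, as recorded in \eqref{e-gue230319ycdaI}; this is precisely the phase produced by the boundary FIO calculus of~\cite[Part II, Chapter 7]{Hsiao08} when $P$ and $L$ are applied to an integral of the form $\int_0^\infty e^{it\Phi}a\,dt$. The jump in the $t$-order from $n-1-\frac d2$ for $a$ to $n-\frac d2$ for $b$ is exactly the extra power of $t$ gained from the stationary-phase integration in the transversal $\rho$-variable introduced by the Poisson extension. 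On the diagonal $z=w=x\in\mu^{-1}(0)\cap D$ we have $\rho=0$ and $\Psi=0$, so the Poisson phase factors are trivial and $b_0(x,x)$ is governed entirely by the transversal model integral.

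The heart of the matter, and the main obstacle, is the numerical factor $2$. This is a purely local, transversal computation: one trivializes near a boundary point, uses the model half-space description of $P$ from~\cite{B71} and the symbol of $(P^*P)^{-1}$, and carries out the fiber integration in $t$ together with the normal integration against the Poisson kernel; the doubling $b_0(x,x)=2a_0(x,x)$ then drops out of this model integral exactly as in~\cite[Part II, Proposition 7.10]{Hsiao08}. The essential point is that this transversal symbol computation is insensitive to the $G$-action: the operators $P$, $P^*$, $L$ are $G$-equivariant and $\hat S_G$ is obtained from the local Szeg\H{o} parametrix $\hat S$ by the group average \eqref{e-123}, so the computation is word-for-word that of the non-invariant case (Theorem~\ref{t-028} and \cite{HM19}), with the Szeg\H{o} coefficient replaced by its reduced version $a_0$. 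This yields $b_0(x,x)=2a_0(x,x)$ for every $x\in\mu^{-1}(0)\cap D$.
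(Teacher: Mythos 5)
Your proposal is correct and follows essentially the same route as the paper: the paper obtains this theorem by applying~\cite[Part II, Proposition 7.10]{Hsiao08} to the identity $B_G\equiv P\hat S_GL$ of Theorem~\ref{t-gue230319yydm}, which is exactly your argument, including the observation that the transversal model computation producing the factor $2$ is unaffected by the $G$-averaging in \eqref{e-123} since $P$, $P^*$, $L$ are $G$-equivariant and $\hat S_G$ has the same phase structure with leading coefficient $a_0$. Your account in fact spells out more explicitly than the paper why the non-equivariant computation transfers verbatim.
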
 

\begin{proof}[End of the proof of Theorem \ref{t-510}]
From Theorem~\ref{t-027}, Theorem~\ref{t-gue230319yydn} and Theorem~\ref{t-gue230319yydp}, we get Theorem~\ref{t-510}. 
\end{proof}

Let $D$ be an open set of ${\color{red}\widehat X}$ in $X$ with $D=GD$. 
Let $\chi\in C^\infty_c(D)^G$, $\chi\equiv1$ near $\mu^{-1}(0)\cap X$. Let $\chi_1\in C^\infty_c(D)^G$ with $\chi_1\equiv1$ on ${\rm supp\,}\chi$. Let $\tilde S_G:=\chi_1\hat S_G\chi: C^\infty(X)\To C^\infty(X)^G$, where $\hat S_G$ is as in \eqref{e-123}. From Theorem~\ref{t-027} and \eqref{e-gue230319yyds}, we get 

\begin{theorem}\label{t-222}
With the notations and assumptions above, we have
\[B_G\equiv P\tilde S_G(P^*P)^{-1}P^*\mod C^\infty(\overline M\times \overline M).\]
\end{theorem}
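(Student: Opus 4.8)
The plan is to reduce the global statement on $\ol M\times\ol M$ to one on the boundary, and then to patch the local parametrix \eqref{e-gue230319yyds} together using Theorem~\ref{t-027}. First I would use the identity $B_G=P\gamma B_G$ from \eqref{029} together with the fact that the Poisson operator $P$ sends operators that are smoothing on $X\times\ol M$ to operators smoothing on $\ol M\times\ol M$ (since $P:C^\infty(X)\to C^\infty(\ol M)$ and $P:W^s(X)\to W^{s+\frac12}(\ol M)$ for all $s$). This reduces the theorem to proving
\[\gamma B_G\equiv\tilde S_G(P^*P)^{-1}P^*\mod C^\infty(X\times\ol M).\]

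Next I would rewrite the local parametrix on the boundary. Applying the restriction $\gamma$ to \eqref{e-gue230319yyds} and using $\gamma P={\rm id}$ gives $\gamma B_G\equiv \hat SLQ_G$ on each $G$-invariant coordinate neighborhood $U$ of a point of $\mu^{-1}(0)\cap X$. I would then invoke $LQ_G=Q_{G,X}L$ from \eqref{e-gue230319yydj}, the kernel identity $\hat S_G=\hat SQ_{G,X}$ (which follows from the definition \eqref{e-123} by the change of variables $z=gw$ and the $G$-invariance of $dv_X$), and $L\equiv(P^*P)^{-1}P^*$ from \eqref{e-gue230319yydi}, to obtain the local identity $\gamma B_G\equiv \hat S_G(P^*P)^{-1}P^*$ near each such point. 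On the pseudoconcave part of $\mu^{-1}(0)\cap X$ both sides vanish modulo $C^\infty$, by \eqref{e-gue230319ycdp} and Theorem~\ref{t-gue230319yydn} respectively, so only the strongly pseudoconvex part contributes.

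Finally I would globalize. Choosing a further $G$-invariant cutoff $\chi_2\in C^\infty_c(D)^G$ with $\chi_2\equiv1$ near $\mu^{-1}(0)\cap X$, $\chi\equiv1$ on ${\rm supp\,}\chi_2$ and $\chi_1\equiv1$ on ${\rm supp\,}\chi$, Theorem~\ref{t-027} (transported to $X$ by extending a boundary cutoff supported off $\mu^{-1}(0)\cap X$ to $\ol M$, applying $B_G$, and restricting the resulting smooth kernel) yields $(1-\chi_2)\gamma B_G\equiv0$, hence $\gamma B_G\equiv\chi_2\gamma B_G$. A $G$-invariant partition of unity subordinate to the neighborhoods $U$, again combined with Theorem~\ref{t-027} to discard the contributions away from the diagonal, promotes the local identity to $\chi_2\gamma B_G\equiv\chi_2\hat S_G(P^*P)^{-1}P^*$. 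Since $\chi,\chi_1,\chi_2$ are all $\equiv1$ near $\mu^{-1}(0)\cap X$ and $\hat S_G$ is smoothing off the diagonal over $\mu^{-1}(0)\cap X$, the cutoff differences $(\chi_2-\chi_1)\hat S_G$ and $\hat S_G(1-\chi)$ are smoothing, giving $\chi_2\hat S_G(P^*P)^{-1}P^*\equiv\chi_1\hat S_G\chi(P^*P)^{-1}P^*=\tilde S_G(P^*P)^{-1}P^*$; here I use that $X$ is compact, so that composing a smoothing operator on $X$ with the global operator $(P^*P)^{-1}P^*$ still yields something smoothing on $X\times\ol M$.

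The hard part will be precisely this last globalization step: the operators $P$, $P^*$ and $(P^*P)^{-1}$ are not pseudolocal in the $X\times\ol M$ sense, so the purely local information in \eqref{e-gue230319yyds} does not patch together by a formal partition-of-unity argument alone. The role of Theorem~\ref{t-027} is exactly to guarantee that the off-diagonal contributions produced by these global operators are smoothing, and carefully tracking which cutoff must equal $1$ on the support of which other cutoff — and checking that each error term lands in $C^\infty(X\times\ol M)$ rather than merely in $C^\infty(X\times X)$ — is where the genuine care is required.
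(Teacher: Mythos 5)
Your proposal is correct and takes essentially the same route as the paper: the paper's own (very terse) proof of Theorem~\ref{t-222} consists precisely of combining Theorem~\ref{t-027} with the local parametrix \eqref{e-gue230319yyds} (equivalently, with Theorem~\ref{t-gue230319yydm}, i.e.\ $B_G\equiv P\hat S_G L$, obtained from $LQ_G=Q_{G,X}L$, $\hat S Q_{G,X}=\hat S_G$ and $L\equiv (P^*P)^{-1}P^*$), exactly the ingredients you use. Your reduction to the boundary via $B_G=P\gamma B_G$ and the explicit cutoff bookkeeping with $\chi_2,\chi,\chi_1$ are simply a detailed write-up of the patching the paper leaves implicit, and your checks (the pseudoconcave part contributing nothing, smoothing errors surviving composition with the global operators $P$, $P^*$, $(P^*P)^{-1}$) are all sound.
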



\section{The proofs of Theorem~\ref{t-gue230411yyd}, Theorem~\ref{t-que2204301824} and Theorem~\ref{t-gue230329yyd}}\label{s-gue230324yyd} 

For simplicity, until further notice, we assume that $\mu^{-1}(0)\cap X$ is strongly pseudoconvex.

For every $s\in\mathbb R$, consider the map
\begin{equation}\label{e-701}
\begin{split}
\hat{\sigma}_s=\hat{\sigma}: H^0(\overline M)^G_s&\to H^0_b(X)^G_{s-\frac{1}{2}} \\
u&\mapsto (P^\ast P)^{-1}P^\ast u=\gamma u.
\end{split}
\end{equation}
Since $(P^\ast P)^{-1} P^\ast:H^s(\overline M)^G_s\to W^{s-\frac{1}{2}}(X)$ is continuous, for every $s\in\mathbb{R}$, $\hat\sigma$ is well-defined.
We define ${\rm Coker\,}\hat\sigma_s$ in the following way:
\begin{equation}\label{e-124}
{\rm Coker\,}\hat\sigma_s={\rm Coker\,}\hat\sigma:=\{u\in H^0_b(X)^G_{s-\frac{1}{2}};\, (\,u\,|\,\hat\sigma v)_X=0, \forall v\in H^0(\overline M)^G_s\cap C^\infty(\overline M)\}.
\end{equation} 

\begin{theorem}\label{t-701}
We have that ${\rm Ker\,}\hat\sigma_s=\set{0}$
and ${\rm Coker\,}\hat\sigma_s$ is a finite dimensional subspace of $C^\infty(X)^G\cap H^0_b(X)^G$. 
Moreover, ${\rm Coker\,}\hat\sigma_s$ is independent of $s$.
\end{theorem}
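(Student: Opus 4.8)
The plan is to treat the two halves of the statement by different mechanisms: injectivity will come from a Poisson‑reconstruction identity, and the cokernel assertions will follow from exhibiting $B_GP$ as an explicit approximate inverse of $\hat\sigma$, so that the composition $\hat\sigma\circ(B_GP)$ equals the $G$‑invariant Szeg\H{o} projection modulo a smoothing operator. Once this is in hand, the finiteness, smoothness and $s$‑independence all reduce to the Fredholm theory of ``identity plus smoothing'' acting on $H^0_b(X)^G$ along the Sobolev scale.

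For injectivity I would start from the identity $P\gamma B_G=B_G$ obtained in the proof of Theorem~\ref{t-027} (see \eqref{029}). For $u\in H^0(\ol M)^G_s$ one has $B_Gu=u$, hence $\gamma u=\gamma B_Gu=\hat\sigma_s u$ and therefore $u=P\gamma B_Gu=P\hat\sigma_s u$; this holds for smooth invariant holomorphic functions and extends to all of $H^0(\ol M)^G_s$ by density of $H^0(\ol M)^G\cap C^\infty(\ol M)$ together with the continuity of $P$ and $\gamma$ on the relevant Sobolev spaces (Theorem~\ref{t-02}). Consequently $\hat\sigma_s u=0$ forces $u=P0=0$, so $\operatorname{Ker}\hat\sigma_s=\{0\}$.

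For the cokernel I would set $T:=B_GP$. By the regularity of the Poisson operator and of $B_G$ (Theorem~\ref{t-gue230318yyd} and Theorem~\ref{t-02}), $T$ maps $H^0_b(X)^G_{s-1/2}$ continuously into $H^0(\ol M)^G_s$ and maps smooth CR functions into $H^0(\ol M)^G\cap C^\infty(\ol M)$. Feeding the global description of Theorem~\ref{t-222}, $B_G\equiv P\tilde S_G(P^*P)^{-1}P^*$, into the computation and using $\gamma P=\operatorname{id}$, $(P^*P)^{-1}P^*P=\operatorname{id}$ and $\tilde S_G\equiv S_G$, I expect
\[
\hat\sigma\circ T=\gamma B_GP\equiv S_G \mod C^\infty(X\times X),
\]
so that $\hat\sigma T=S_G+K$ with $K$ smoothing. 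Since $S_G=\operatorname{id}$ on $H^0_b(X)^G$, this reads $\hat\sigma T=I+K$ there, with $K$ compact on each Sobolev space. If $u\in\operatorname{Coker}\hat\sigma_s$, then pairing $u$ against $\hat\sigma(Tw)=(S_G+K)w=w+Kw$ for all $w\in H^0_b(X)^G\cap C^\infty(X)$ yields $S_G(u+K^*u)=0$, i.e. $u=-S_GK^*u$ (using $S_Gu=u$). As $K^*$ is smoothing and $S_G$ preserves smoothness near $\mu^{-1}(0)\cap X$, this already shows $u\in C^\infty(X)^G\cap H^0_b(X)^G$, and $u\in\operatorname{Ker}(I+S_GK^*)$ on $H^0_b(X)^G$, a finite‑dimensional space because $S_GK^*$ is smoothing hence compact. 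Independence of $s$ then follows: every element of $\operatorname{Coker}\hat\sigma_s$ is smooth, the test space $H^0(\ol M)^G_s\cap C^\infty(\ol M)=H^0(\ol M)^G\cap C^\infty(\ol M)$ does not depend on $s$, and for a smooth $u$ the defining condition in \eqref{e-124} is manifestly $s$‑free.

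The hard part will be the microlocal identity $\gamma B_GP\equiv S_G$, which rests on the full Fourier integral operator description of $B_G$ (Theorem~\ref{t-222}, hence ultimately \cite{HM19,HMM}) and on the comparison $\tilde S_G\equiv S_G$; the delicate bookkeeping is to justify that the cut‑offs defining $\tilde S_G$ may be discarded modulo $C^\infty$ — using that $S_G$ is smoothing away from $\mu^{-1}(0)\cap X$ and agrees with $\hat S_G$ near it — and that all operators involved act continuously on the Sobolev scale so the remainders are genuinely compact. A secondary technical point, needed to conclude smoothness of cokernel elements, is that the $G$‑invariant Szeg\H{o} projection $S_G$ maps $C^\infty$ into $C^\infty$, so that $S_GK^*$ is smoothing; this is exactly where the regularity theory of $S_G$ near the strongly pseudoconvex locus $\mu^{-1}(0)\cap X$ enters.
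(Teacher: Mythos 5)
Your injectivity argument is fine and is essentially what the paper dismisses as obvious: for $u\in H^0(\ol M)^G_s$ one has $\hat\sigma_s u=\gamma u$ and $u=P\gamma B_G u=P\hat\sigma_s u$ by \eqref{029} and density, so $\hat\sigma_s u=0$ forces $u=0$.

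The cokernel half, however, has a genuine gap exactly at the step you yourself flag as "the hard part": the identity $\hat\sigma\circ(B_GP)\equiv S_G\mod C^\infty(X\times X)$, which after the reduction $\gamma B_GP\equiv\tilde S_G$ (legitimate, by Theorem~\ref{t-222} and $\gamma P={\rm id}$) rests on the unproved assertion $\tilde S_G\equiv S_G$. Nothing in the paper or in the results it cites gives this. The operator $\tilde S_G=\chi_1\hat S_G\chi$ is built from the approximate projector $\hat S$ for $\Box^{(0)}_\beta$ in \eqref{e-121}, and $\ddbar_\beta=\ddbar_b+{}$lower order terms only (Proposition~\ref{t-gue171012}); consequently $\tilde S_G$ and the genuine $G$-invariant Szeg\H{o} projection $S_G$ are two complex Fourier integral operators with the same phase $\Phi$ whose \emph{full} symbols need not agree — the available comparisons (cf.\ \eqref{e-gue230319ycdp}, \eqref{e-gue230323yyd} and the diagonal statement of Theorem~\ref{t-gue230319yydp}) control leading terms on ${\rm diag\,}((\mu^{-1}(0)\cap X)\times(\mu^{-1}(0)\cap X))$ only. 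Hence your remainder $K$ is not smoothing: it is a same-order Szeg\H{o}-type FIO whose leading term merely vanishes on the diagonal of the critical set, so it is not compact by soft arguments, and the "identity plus compact" Fredholm scheme (and with it your smoothness claim for cokernel elements via $u=-S_GK^*u$) collapses. This is precisely why the paper argues differently: it extends $\hat\sigma$ by \eqref{e-gue230320yydh}, composes with the formal adjoint, and uses Theorem~\ref{t-222} together with the Melin--Sj\"ostrand stationary phase formula \cite{MS74} to obtain $S_G(P^*P)\hat\sigma\hat\sigma^*=S_G(I+R)S_G$ in \eqref{e-125}, where $R$ is a complex FIO with $r_0(x,x)=0$ for $x\in\mu^{-1}(0)\cap D$; the finite dimensionality and smoothness of ${\rm Ker\,}(I+R)$ is then \emph{not} elementary but is the content of \cite[Theorem 4.9]{HMM}, and the theorem follows from ${\rm Coker\,}\hat\sigma_s\subset{\rm Ker\,}(I+R)\cap H^0_b(X)^G_{s-\frac{1}{2}}$. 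To repair your proposal you would have to replace "$K$ smoothing, hence compact" by exactly this vanishing-leading-term analysis, at which point it becomes the paper's proof.
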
 

\begin{proof}
It is obvious that ${\rm Ker\,}\hat\sigma=\{0\}$. We extend $\hat\sigma$ to $\hat\sigma: \hat{\mathscr D}'(\overline M)\to\mathscr D'(X)$ by putting
\begin{equation}\label{e-gue230320yydh}
\hat\sigma u=(P^\ast P)^{-1}P^\ast B_G u=S_G(P^\ast P)^{-1}P^\ast B_G u, \ u\in\hat{\mathscr D}'(\overline M).
\end{equation}
Recall that $\hat{\mathscr D}'(\overline M)$ is the space of continuous linear maps from $C^\infty(\ol M)$ to $\mathbb C$. Since 
\[B_GP(P^*P)^{-1}S_G=(S_G(P^*P)^{-1}P^*B_G)^*\] 
maps $C^\infty(X)$ to $C^\infty(\ol M)$, the definition \eqref{e-gue230320yydh} is well-defined, where 
\[(S_G(P^*P)^{-1}P^*B_G)^*\] is the formal 
adjoint of $S_G(P^*P)^{-1}P^*B_G$ with respect to $(\,\cdot\,|\,\cdot\,)_X$ and $(\,\cdot\,|\,\cdot\,)_M$. 

Let $\hat\sigma^*: \mathscr D'(X)\to\hat{\mathscr D}'(\overline M)$ be the formal adjoint of $\hat\sigma$ with respect to the given $L^2$ inner products $(\,\cdot\,|\,\cdot\,)_M$ and $(\,\cdot\,|\,\cdot\,)_X$. We have
\begin{equation*}
\hat\sigma\hat\sigma^\ast=S_G(P^\ast P)^{-1}P^\ast B_GP(P^\ast P)^{-1}S_G.
\end{equation*}
From Theorem~\ref{t-222}, we have
\begin{equation*}
\begin{split}
\hat\sigma\hat\sigma^\ast&=S_G(P^\ast P)^{-1}P^\ast(P\tilde S_G(P^\ast P)^{-1}P^\ast+F)P(P^\ast P)^{-1}S_G\\
&=S_G\tilde S_G(P^\ast P)^{-1}S_G+S_G\hat F S_G,
\end{split}
\end{equation*}
where $F\equiv 0\mod C^\infty(\overline M\times \overline M)$ and $\hat F\equiv 0$. 
Thus
\begin{equation*}
S_G(P^\ast P)\hat\sigma\hat\sigma^\ast=S_G(P^\ast P)S_G\tilde S_G(P^\ast P)^{-1}S_G+S_G(P^\ast P)S_G\hat FS_G.
\end{equation*}
From \eqref{e-gue230319ycdp} and the complex stationary phase formula of Melin-Sj\"ostrand~\cite{MS74}, we deduce that
\begin{equation}\label{e-125}
S_G(P^\ast P)\hat\sigma\hat\sigma^\ast=S_G(I+R)S_G,
\end{equation}
where $R$ is smoothing away $\mu^{-1}(0)\cap X$ and for any $p\in\mu^{-1}(0)\cap X$, let $D$ be any small 
local coordinate patch of $X$, $p\in D$, if the Levi form is negative on $D$, then $R\equiv0$. If the Levi form is positive on $D$, we have 
\[R\equiv\int^{+\infty}_0e^{it\Psi(x,y)}r(x,y,t)dt,\]
$r(x,y,t)\in S^{n-1-\frac{d}{2}}_{1,0}(D\times D\times\mathbb R_+)$, $r(x,y,t)\sim\sum^{+\infty}_{j=0}t^{n-1-\frac{d}{2}-j}r_j(x,y)$ in $S^{n-1-\frac{d}{2}}_{1,0}(D\times D\times\mathbb R_+)$, $r_j(x,y)\in C^\infty(D\times D)$, $j=0,1,\ldots$, $r_0(x,x)=0$, for every $x\in\mu^{-1}(0)\cap D$. 
From \cite[Theorem 4.9]{HMM}, we know that ${\rm Ker\,}(I+R)$ is a finite dimensional subspace of $C^\infty(X)$. It follows from
\begin{equation*}
{\rm Coker\,}\hat\sigma_s\subset{\rm Ker\,}(I+R)\cap H^0_b(X)^G_{s-\frac{1}{2}}
\end{equation*}
that ${\rm Coker\,}\hat\sigma_s$ is a finite dimensional subspace of $C^\infty(X)^G$ and ${\rm Coker\,}\hat\sigma_s$ is independent of $s$.
\end{proof} 

As before, let $X_G:=(\mu^{-1}(0)\cap X)/G$. Let $\iota_X: \mu^{-1}(0)\cap X\To X$ be the natural inclusion and let $\iota^*_X: C^\infty(X)\To C^\infty(\mu^{-1}(0)\cap X)$ be the pull-back by $\iota_X$. Let $\iota_{X,G}: C^\infty(\mu^{-1}(0)\cap X)^G\To C^\infty(X_G)$ be the natural identification. Put 
\[\begin{split}
&H^0_b(X)^G:=\set{u\in L^2(X)^G;\, \ddbar_bu=0},\\
&H^0_b(X_G):=\set{u\in L^2(X_G);\, \ddbar_{b,X_G}u=0},
\end{split}\]
where $\ddbar_{b,X_G}$ denotes the tangential Cauchy-Riemann operator on $X_G$. For every $s\in\mathbb R$, put 
\[\begin{split}
&H^0_b(X)^G_s:=\set{u\in W^s(X);\, \ddbar_bu=0,\ \ g^*u=0,\ \ \forall g\in G},\\
&H^0_b(X_G)_s:=\set{u\in W^s(X_G);\, \ddbar_{b,X_G}u=0}.
\end{split}\] 
Let 
\[\begin{split}
\sigma_1: H^0_b(X)^G&\To H^0_b(X_G),\\
u&\To\iota_{X,G}\circ\iota^*_Xu.
\end{split}
\] 
From~\cite[Theorem 5.3]{HMM}, $\sigma_1$ extends by density to a bounded operator 
\[\sigma_1=\sigma_{1,s}: H^0_b(X)^G_s\To H^0_b(X_G)_{s-\frac{d}{4}},\]
for every $s\in\mathbb R$. Let $\triangle^X$ and $\triangle^{X_G}$ 
be the (positive) Laplacians on $X$ and $X_G$ respectively. For $s\in\mathbb R$, let $\Lambda_s:=(I+\triangle^X)^{\frac{s}{2}}$, 
$\hat\Lambda_s:=(I+\triangle^{X_G})^{\frac{s}{2}}$. Fix $s\in\mathbb R$. For $u, v\in W^s(X)$, $u', v'\in W^s(X_G)$, we define the inner products 
\[\begin{split}
&(\,u\,|\,v\,)_{X,s}:=(\Lambda_su\,|\,\Lambda_sv\,)_X,\\
&(\,u'\,|\,v'\,)_{X_G,s}:=(\hat\Lambda_su'\,|\,\hat\Lambda_sv'\,)_{X_G}
\end{split}\]
and let $\norm{\cdot}_{X,s}$ and $\norm{\cdot}_{X_G,s}$ be the corresponding norms. For every $s\in\mathbb R$, define 
\[({\rm Im\,}\sigma_{1,s})^\perp:=\set{u\in H^0_b(X_G)_{s-\frac{d}{4}};\, (\,\sigma_{1,s}v\,|\,u\,)_{X_G,s-\frac{d}{4}}=0,\ \ \forall v\in H^0_b(X)^G_s}.\]
The following theorem is the main result in \cite{HMM} (see~\cite[Theorem 1.2]{HMM})

\begin{theorem}\label{t-gue230321yyd}
With the notations and assumptions used above, assume that $\ddbar_{b,X_G}$ has closed range. Then, for every $s\in\mathbb R$, 
${\rm Ker\,}\sigma_{1,s}$ and $({\rm Im\,}\sigma_{1,s})^\perp$ are \
finite dimensional subspaces of $H^0_b(X)^G\cap C^\infty(X)^G$ and 
$H^0_b(X_G)\cap C^\infty(X_G)$ respectively, ${\rm Ker\,}\sigma_{1,s}$ and the index ${\rm dim\,}{\rm Ker\,}\sigma_{1,s}-{\rm dim\,}({\rm Im\,}\sigma_{1,s})^\perp$ are independent of $s$.
\end{theorem}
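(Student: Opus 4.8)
The plan is to reduce the Fredholm property to the mapping behaviour of the Szeg\H{o}-type compositions $\sigma_{1,s}\sigma^*_{1,s}$ and $\sigma^*_{1,s}\sigma_{1,s}$, exactly in the spirit of the proof of Theorem~\ref{t-701} but now carried out entirely on the boundary. Since the statement is literally \cite[Theorem 1.2]{HMM}, in the paper it suffices to cite that reference; I outline below the argument one reproduces.

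First I would extend $\sigma_1$ to distributions by inserting the relevant Szeg\H{o} projections: writing $S_G$ for the $G$-invariant Szeg\H{o} projection on $X$ (see \eqref{e-gue230318yydn}) and $S_{X_G}$ for the Szeg\H{o} projection on the reduced CR manifold $X_G$, one has, modulo lower order terms, $\sigma_{1,s}=S_{X_G}\circ(\iota_{X,G}\circ\iota^*_X)\circ S_G$. The adjoint $\sigma^*_{1,s}$, formed with respect to the weighted inner products $(\,\cdot\,|\,\cdot\,)_{X,s}$ and $(\,\cdot\,|\,\cdot\,)_{X_G,s-\frac{d}{4}}$ built from $\Lambda_s$ and $\hat\Lambda_{s-\frac{d}{4}}$, is then a composition of $S_G$, a fibre integration over the $G$-orbits in $\mu^{-1}(0)\cap X$, and $S_{X_G}$, together with these elliptic weights.

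The technical core is to compute the two compositions modulo smoothing. Here I would use the description of $S_G$ as a complex Fourier integral operator of the reduced order $n-1-\frac{d}{2}$ recorded in \eqref{e-gue230319ycdp}--\eqref{e-gue230323yyd}, together with the Boutet de Monvel--Sj\"ostrand parametrix for $S_{X_G}$, which is available precisely because $\ddbar_{b,X_G}$ is assumed to have closed range. Composing the kernels and applying the complex stationary phase formula of Melin--Sj\"ostrand \cite{MS74}, one obtains $\sigma^*_{1,s}\sigma_{1,s}=S_G(I+R)S_G$ and $\sigma_{1,s}\sigma^*_{1,s}=S_{X_G}(I+R')S_{X_G}$, where $R,R'$ are complex Fourier integral operators whose leading symbols vanish on the diagonal, the normalizations (via $V_{{\rm eff\,}}(x)$ and $\det R_x$) being arranged so that the principal symbols cancel, as in \cite[Theorem 1.6]{HH} descended to $X_G$. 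Consequently $I+R$ and $I+R'$ are elliptic Toeplitz-type operators on the ranges of $S_G$ and $S_{X_G}$.

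From this the conclusion is formal: ${\rm Ker\,}\sigma_{1,s}\subset{\rm Ker\,}(I+R)\cap H^0_b(X)^G_s$ and $({\rm Im\,}\sigma_{1,s})^\perp\subset{\rm Ker\,}(I+R')\cap H^0_b(X_G)_{s-\frac{d}{4}}$ are finite dimensional, and the hypoellipticity built into the Fourier integral structure forces their elements to be smooth CR functions; smoothness in turn makes the defining conditions independent of the Sobolev index $s$, and the principal-symbol data controlling the index is likewise $s$-independent. The main obstacle is the symbol computation in the middle step: one must verify that after reduction the leading term $r_0(x,x)$ of $R'$ vanishes, so that $I+R'$ is genuinely elliptic on the Szeg\H{o} space; this rests on the explicit leading-symbol formula for the $G$-invariant Szeg\H{o} kernel and the bookkeeping of the orbit-volume factor and $\det R_x$, and it is here that the closed-range hypothesis on $\ddbar_{b,X_G}$ is essential, since it alone legitimizes treating $S_{X_G}$ as a complex Fourier integral operator.
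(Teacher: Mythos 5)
Your proposal is correct and takes the same route as the paper: the paper offers no independent proof of Theorem~\ref{t-gue230321yyd}, introducing it precisely as \cite[Theorem 1.2]{HMM}, and you rightly observe that the citation suffices. Your sketch of the underlying argument --- extending $\sigma_1$ by inserting the Szeg\H{o} projections, computing $\sigma^*_{1,s}\sigma_{1,s}=S_G(I+R)S_G$ and $\sigma_{1,s}\sigma^*_{1,s}=S_{X_G}(I+R')S_{X_G}$ by the Melin--Sj\"ostrand stationary phase formula \cite{MS74}, using the vanishing of the leading diagonal term and the closed-range hypothesis (which legitimizes the complex Fourier integral description of $S_{X_G}$) to get finite-dimensional kernels of smooth elements --- is exactly the mechanism the paper itself deploys in the proofs of the companion results, Theorem~\ref{t-701} and Theorem~\ref{t-gue230320ycd}.
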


For $s\in\mathbb R$, define 
\begin{equation}\label{e-gue230321yyda}
{\rm Coker\,}\sigma_{1}={\rm Coker\,}\sigma_{1,s}:=\set{u\in H^0_b(X_G)_{s-\frac{d}{4}};\, (\,u\,|\,\sigma_{1,s}v\,)_{X_G}=0,\ \ \forall v\in H^0_b(X)^G\cap C^\infty(X)^G},
\end{equation}
where $(\,\cdot\,|\,\cdot\,)_{X_G}$ is the $L^2$ inner product on $X_G$ induced by $\langle\,\cdot\,|\,\cdot\,\rangle$.

\begin{theorem}\label{t-gue230321yydI}
With the notations and assumptions used above, assume that $\ddbar_{b,X_G}$ has closed range. Then, 
${\rm Coker\,}\sigma_{1,s}=({\rm Im\,}\sigma_{1,\frac{d}{4}})^\perp$, for every $s\in\mathbb R$. In particular, ${\rm Coker\,}\sigma_{1,s}$ is a finite dimensional subspace of $H^0_b(X_G)\cap C^\infty(X_G)$ and ${\rm Coker\,}\sigma_{1,s}$ is independent of $s$.
\end{theorem}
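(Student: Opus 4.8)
The plan is to show that $\operatorname{Coker}\sigma_{1,s}$ is, for every $s$, a finite-dimensional space of smooth functions that is independent of $s$, and that at the distinguished value $s=\frac{d}{4}$ it coincides with $({\rm Im\,}\sigma_{1,\frac{d}{4}})^\perp$; equality for all $s$ then follows. The first ingredient I would record is a density statement: the space $H^0_b(X)^G\cap C^\infty(X)^G$ of smooth $G$-invariant CR functions is dense in $H^0_b(X)^G_t$ for every $t\in\mathbb R$. This follows from the $G$-invariant Szeg\H{o} projection $S_G$ of \eqref{e-gue230318yydn}: since $S_G$ is a complex Fourier integral operator (so it preserves $C^\infty(X)$) and restricts to the identity on $H^0_b(X)^G_t$, one has $S_G(C^\infty(X))=H^0_b(X)^G\cap C^\infty(X)^G$, and continuity of $S_G$ on $W^t(X)$ together with the density of $C^\infty(X)$ in $W^t(X)$ gives the claim.

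Next I would identify the cokernel with a kernel. Let $\sigma_1^*$ denote the $L^2$ formal adjoint of $\sigma_1$ (with respect to $(\,\cdot\,|\,\cdot\,)_X$ and $(\,\cdot\,|\,\cdot\,)_{X_G}$), a continuous map $\sigma_1^*:H^0_b(X_G)_{t}\to H^0_b(X)^G_{t+\frac{d}{4}}$ satisfying $S_G\sigma_1^*=\sigma_1^*$. For $u\in H^0_b(X_G)_{s-\frac{d}{4}}$ the defining condition $(\,u\,|\,\sigma_1 v\,)_{X_G}=0$ for all smooth $v$ becomes $(\,g\,|\,v\,)_X=0$ with $g:=\sigma_1^*u\in H^0_b(X)^G_s$. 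Testing against $v=S_G\phi$, $\phi\in C^\infty(X)$, and using $S_G^*=S_G$ and $S_Gg=g$ gives $(\,g\,|\,\phi\,)_X=0$ for all $\phi\in C^\infty(X)$; by density of $C^\infty(X)$ in $W^{-s}(X)$ this forces $g=0$. Hence $\operatorname{Coker}\sigma_{1,s}=\{u\in H^0_b(X_G)_{s-\frac{d}{4}};\,\sigma_1^*u=0\}$.

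The decisive step, and the main obstacle, is the regularity of these kernel elements when $s-\frac{d}{4}<0$. Here I would use the Toeplitz structure underlying Theorem~\ref{t-gue230321yyd}: from the microlocal analysis of $\sigma_1$ in~\cite{HMM}, the composition $\sigma_1\sigma_1^*$ is a Toeplitz operator $S_{X_G}E\,S_{X_G}$ on $X_G$ (where $S_{X_G}$ is the Szeg\H{o} projection of $X_G$) with $E$ a classical pseudodifferential operator of order $0$ whose symbol is elliptic on the symplectic cone of $X_G$; this ellipticity is exactly what produces the Fredholm property. Consequently $\sigma_1\sigma_1^*$ admits a Toeplitz parametrix $N'$ with $N'\sigma_1\sigma_1^*=S_{X_G}+K$, where $K$ is smoothing on $H^0_b(X_G)$. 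If $\sigma_1^*u=0$ then $N'\sigma_1\sigma_1^*u=0$, while $N'\sigma_1\sigma_1^*u=S_{X_G}u+Ku=u+Ku$, whence $u=-Ku\in C^\infty(X_G)$. Thus $\operatorname{Coker}\sigma_{1,s}\subset H^0_b(X_G)\cap C^\infty(X_G)$; since the defining condition is tested only against smooth functions, smoothness of its elements makes $\operatorname{Coker}\sigma_{1,s}$ independent of $s$, and its finite-dimensionality follows from the parametrix, or from the identification below.

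Finally I would specialize to $s=\frac{d}{4}$, where $\hat\Lambda_{0}=I$ and the weighted inner product $(\,\cdot\,|\,\cdot\,)_{X_G,0}$ is the plain $L^2$ product. There both $\operatorname{Coker}\sigma_{1,\frac{d}{4}}$ and $({\rm Im\,}\sigma_{1,\frac{d}{4}})^\perp$ live in $H^0_b(X_G)$ and are cut out by the same $L^2$-orthogonality to ${\rm Im\,}\sigma_{1,\frac{d}{4}}$, the only difference being that the former tests against smooth $v$ while the latter tests against all $v\in H^0_b(X)^G_{\frac{d}{4}}$; by the density statement of the first paragraph and the continuity of $\sigma_{1,\frac{d}{4}}:H^0_b(X)^G_{\frac{d}{4}}\to L^2(X_G)$ these two conditions coincide. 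Combined with the $s$-independence established above, this yields $\operatorname{Coker}\sigma_{1,s}=({\rm Im\,}\sigma_{1,\frac{d}{4}})^\perp$ for every $s$, and Theorem~\ref{t-gue230321yyd} then identifies this common space as a finite-dimensional subspace of $H^0_b(X_G)\cap C^\infty(X_G)$.
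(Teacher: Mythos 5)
Your proposal is correct and takes essentially the same route as the paper: the paper also gets smoothness of the elements of ${\rm Coker\,}\sigma_{1,s}$ from the microlocal machinery behind~\cite[Theorem 1.2]{HMM} (which is precisely the Toeplitz-parametrix structure you spell out for $\sigma_1\sigma_1^*$), and then uses density of $H^0_b(X)^G\cap C^\infty(X)^G$ in $H^0_b(X)^G_{\frac{d}{4}}$ to extend the orthogonality to all of $H^0_b(X)^G_{\frac{d}{4}}$ and conclude ${\rm Coker\,}\sigma_{1,s}=({\rm Im\,}\sigma_{1,\frac{d}{4}})^\perp$, the reverse inclusion being immediate from the smoothness supplied by Theorem~\ref{t-gue230321yyd}. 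The one inaccuracy is your stated mapping property $\sigma_1^*\colon H^0_b(X_G)_t\to H^0_b(X)^G_{t+\frac{d}{4}}$, which is not what duality from $\sigma_1\colon H^0_b(X)^G_s\to H^0_b(X_G)_{s-\frac{d}{4}}$ yields; this is harmless, since your argument only uses that $\sigma_1^*u$ is a well-defined distributional CR function together with the parametrix identity, not the precise Sobolev index.
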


\begin{proof}
Let $u\in{\rm Coker\,}\sigma_{1,s}$. By definition, we have 
\begin{equation}\label{e-gue230321yydb}
(\,\sigma_{1,\frac{d}{4}}v\,|\,u\,)_{X_G}=0,\ \ \mbox{for every $v\in H^0_b(X)^G\cap C^\infty(X)^G$}. 
\end{equation}
From \eqref{e-gue230321yydb} and the proof of~\cite[Theorem 1.2]{HMM}, we can check that $u\in C^\infty(X_G)$. Thus, \eqref{e-gue230321yydb} holds for all $v\in H^0_b(X)^G_{\frac{d}{4}}$. Hence, $u\in({\rm Im\,}\sigma_{1,\frac{d}{4}})^\perp$. We have proved that ${\rm Coker\,}\sigma_{1,s}\subset({\rm Im\,}\sigma_{1,\frac{d}{4}})^\perp$. It is clear that $({\rm Im\,}\sigma_{1,\frac{d}{4}})^\perp\subset{\rm Coker\,}\sigma_{1,s}$. The theorem follows. 
\end{proof}

\begin{proof}[End of the proof of Theorem \ref{t-gue230411yyd}]
For every $s\in\mathbb R$, it is not difficult to see that $\tilde\sigma_G=\tilde\sigma_{G,s}=\hat\sigma_s\circ\sigma_{1,s}$, where $\tilde\sigma_{G,s}$ is given by \eqref{e-gue2204301555m}, \eqref{e-gue2204301610m}. From this observation, Theorem~\ref{t-701}, Theorem~\ref{t-gue230321yyd} and Theorem~\ref{t-gue230321yydI}, Theorem~\ref{t-gue230411yyd} follows. 
\end{proof} 

We now prove Theorem~\ref{t-que2204301824}. We assume that \eqref{e-gue230411yydc} holds. Recall  $\mu^{-1}(0)\cap X=\widehat X\cup\widetilde X$ on which the Levi form is strongly pseudoconvex and pseudoconcave respectively. 
As before, let $M_G=(\mu^{-1}(0)\cap M)/G$. 
For $u\in H^0_b(\widehat X_G)_s, s\in\mathbb R$, we identify $u$ with an element in $H^0_b(X_G)_s$ by putting $u=0$ on $\widetilde X_G$.
For every $s\in\mathbb R$, let 
\[\begin{split}
\sigma_{2,s}=\sigma_2: H^0_b(\widehat X_G)_s&\To H^0(\ol M_G)_{s+\frac{1}{2}},\\
u&\To B_{M_G}P_{M_G}u,
\end{split}\]
where $B_{M_G}$ and $P_{M_G}$ are the Bergman projection on $M_G$ and the Poisson operator on $M_G$ respectively. For every $s\in\mathbb R$, let 
\begin{equation}\label{e-gue230326yyd}
{\rm Coker\,}\sigma_{2}={\rm Coker\,}\sigma_{2,s}:=\set{u\in H^0(\ol M_G)_{s+\frac{1}{2}};\, (\,u\,|\,\sigma_{2,s}v\,)_{M_G}=0,\ \ \forall v\in H^0_b(\widehat X_G)\cap C^\infty(\widehat X_G)}.
\end{equation}

Now we are in a position to prove a key result. 

\begin{theorem}\label{t-gue230320ycd}
With the notations and assumptions used above, assume that $\ddbar_{b,X_G}$ has closed range.
Let $s\in\mathbb R$. We have that ${\rm Ker\,}\sigma_{2,s}$
and ${\rm Coker\,}\sigma_{2,s}$ are  finite dimensional subspaces of $H^0_b(\widehat X_G)\cap C^\infty(\widehat X_G)$ and $H^0(\ol M_G)\cap C^\infty(\ol M_G)$ respectively,  ${\rm Ker\,}\hat\sigma_{2,s}$
and ${\rm Coker\,}\hat\sigma_{2,s}$ are independent of $s$. 
\end{theorem}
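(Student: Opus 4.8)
The plan is to run the argument of Theorem~\ref{t-701} on the reduced manifold $M_G$ in place of $M$. By Theorem~\ref{t-gue230328yyd}, $M_G$ is a relatively compact complex manifold with smooth non-degenerate boundary $X_G=\widehat X_G\cup\widetilde X_G$, the Levi form being positive on $\widehat X_G$ and negative on $\widetilde X_G$. Hence the whole apparatus of Section~\ref{s-gue171010} and the Bergman-kernel description of \cite{HM19} apply to $M_G$ in the non-equivariant case. Writing $P:=P_{M_G}$, $B:=B_{M_G}$ and $S:=S_{X_G}$ for the Poisson operator, Bergman projection and Szeg\H{o} projection of $M_G$, the closed range hypothesis on $\ddbar_{b,X_G}$ (the analogue of Corollary~\ref{c-02} and \cite[Theorem~3.17]{HMM}) guarantees that $S$ is well defined, is smoothing on the pseudoconcave part $\widetilde X_G$, and on $\widehat X_G$ is a complex Fourier integral operator with the phase of \eqref{e-gue230319ycdp}. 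Moreover \cite[(5.111), Theorem~6.11]{HM19} (compare Theorem~\ref{t-222}) gives $B\equiv P\tilde S(P^*P)^{-1}P^*\ \mathrm{mod}\ C^\infty(\ol M_G\times\ol M_G)$ with $\tilde S$ a cut-off Szeg\H{o} projector concentrated near $\widehat X_G$; in particular $BP\equiv P\tilde S$.

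First I would treat ${\rm Ker\,}\sigma_{2,s}$. As in \eqref{e-gue230320yydh} I extend $\sigma_2$ to distributions by inserting the Szeg\H{o} projection, $\sigma_2=BPS$, and compute its formal $L^2$ adjoint: for $v\in H^0(\ol M_G)$ one has $(\,\sigma_2u\,|\,v\,)_{M_G}=(\,Pu\,|\,v\,)_{M_G}=(\,u\,|\,P^*v\,)_{X_G}$, so $\sigma_2^*=SP^*B$, which on $H^0(\ol M_G)$ is $SP^*$. Using $BP\equiv P\tilde S$, the identity $\tilde S S\equiv S$ and $B^2=B$, I obtain $\sigma_2^*\sigma_2=SP^*BPS\equiv S(P^*P)S\ \mathrm{mod}\ C^\infty$, an elliptic Toeplitz operator of order $-1$ on the range of $S$ whose principal symbol is that of the positive elliptic operator $P^*P$. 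Multiplying on the left by $(P^*P)^{-1}$ and invoking the complex stationary phase formula of Melin--Sj\"ostrand~\cite{MS74} together with \eqref{e-gue230319ycdp}, exactly as in the passage to \eqref{e-125}, brings this into the normalized form $S(I+R)S$, where $R$ is smoothing off $\widehat X_G$ with vanishing leading symbol along $\widehat X_G$. By \cite[Theorem~4.9]{HMM}, ${\rm Ker\,}(S(I+R)S)$ is a finite dimensional subspace of $C^\infty(\widehat X_G)\cap H^0_b(\widehat X_G)$ independent of the Sobolev index, and it coincides with ${\rm Ker\,}\sigma_{2,s}$.

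For ${\rm Coker\,}\sigma_{2,s}$ I would proceed symmetrically on the Bergman side. Taking adjoints in $BP\equiv P\tilde S$ gives $P^*B\equiv\tilde SP^*$, whence $\sigma_2\sigma_2^*=BPSP^*B\equiv B(PSP^*)B\ \mathrm{mod}\ C^\infty$, an elliptic Toeplitz-type operator on $H^0(\ol M_G)$. By \eqref{e-gue230326yyd}, an element of ${\rm Coker\,}\sigma_{2,s}$ is orthogonal in $L^2(M_G)$ to $\sigma_2(H^0_b(\widehat X_G)\cap C^\infty(\widehat X_G))$, hence lies in the kernel of $\sigma_2\sigma_2^*$ up to the smoothing perturbation; \cite[Theorem~4.9]{HMM} again yields that this kernel is finite dimensional, smooth, and contained in $H^0(\ol M_G)\cap C^\infty(\ol M_G)$. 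The smoothness and $s$-independence of the cokernel are then obtained exactly as in the proof of Theorem~\ref{t-gue230321yydI}: the cokernel elements solve an operator equation of the form $B(I+R)Bu\equiv0$ whose solutions are smooth, so the defining orthogonality persists across all $s$.

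The main obstacle I expect is the symbolic reduction hidden in the two displayed congruences: one must compose the Poisson, Bergman and Szeg\H{o} Fourier integral operators via \cite{MS74} and verify that the leading symbols of $\sigma_2^*\sigma_2$ and $\sigma_2\sigma_2^*$ are elliptic and nonvanishing along $\widehat X_G$, so that the remainders are genuinely of lower order and the Toeplitz Fredholm theory of \cite[Theorem~4.9]{HMM} applies. A secondary but delicate point is the bookkeeping forced by the extension-by-zero of $u\in H^0_b(\widehat X_G)$ to $X_G$ and its interaction with the pseudoconcave piece $\widetilde X_G$, on which $S$ is smoothing; here the closed range hypothesis for $\ddbar_{b,X_G}$ is precisely what guarantees that $S$, the partial inverse $N_b$, and hence all the above compositions, have the required mapping properties.
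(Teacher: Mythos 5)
Your proposal is correct and follows essentially the same route as the paper: extend $\sigma_2$ to distributions by inserting the Szeg\H{o} projector, factor the Bergman projection of $M_G$ through the Poisson operator as $B_{M_G}\equiv P_{M_G}\hat S(P^*_{M_G}P_{M_G})^{-1}P^*_{M_G}$ modulo smoothing, reduce $\sigma_2^*\sigma_2$ (after multiplying by $S(P^*_{M_G}P_{M_G})^{-1}$, exactly as in the passage to \eqref{e-125}) to the normalized form $S(I+\hat R)S$ via Melin--Sj\"ostrand~\cite{MS74}, and conclude finite dimensionality, smoothness and $s$-independence from the Fredholm kernel results of \cite{HMM}. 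The only cosmetic deviation is on the cokernel side, where the paper studies the normalized operator $\sigma_2(P^*_{M_G}P_{M_G})^{-1}\sigma_2^*=B_{M_G}(I+R_{M_G})B_{M_G}$ (via \cite[Theorem 1.2]{Hsiao08}) rather than your unnormalized $\sigma_2\sigma_2^*\equiv B_{M_G}(P_{M_G}SP^*_{M_G})B_{M_G}$, which is of order $-1$ rather than identity plus lower order; inserting the same $(P^*_{M_G}P_{M_G})^{-1}$ factor you already use on the kernel side is precisely the symbolic normalization you flagged as the main obstacle, and with it the Toeplitz Fredholm theorem applies verbatim.
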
 

\begin{proof} 
We extend $\sigma_2$ to $\sigma_2: \mathscr D'(\widehat X_G)\To\hat{\mathscr D}'(\ol M_G)$ by putting 
\[\sigma_2u=B_{M_G}P_{M_G}S_{\widehat X_G}u,\ \ u\in\mathscr D'(\widehat X_G),\]
where $S_{\widehat X_G}$ denotes the Szeg\H{o} projection on $\widehat X_G$. Let $\sigma^*_2: \hat{\mathscr D}'(\ol M_G)\To\mathscr D'(\widehat X_G)$ 
be the formal adjoint of $\sigma_2$ with respect to $(\,\cdot\,|\,\cdot\,)_{\widehat X_G}$ and $(\,\cdot\,|\,\cdot\,)_{M_G}$. We have 
\begin{equation}\label{e-gue230321yydm}
\begin{split}
&\sigma^*_2\sigma_2=S_{\widehat X_G}P^*_{M_G}B_{M_G}P_{M_G}S_{\widehat X_G}\\
&=S_{\widehat X_G}P^*_{M_G}\Bigr(P_{M_G}\hat S_{\widehat X_G}(P^*_{M_G}P_{M_G})^{-1}P^*_{M_G}+F)P_{M_G}S_{\widehat X_G}\\
&=S_{\widehat X_G}P^*_{M_G}P_{M_G}\hat S_{\widehat X_G}S_{\widehat X_G}+S_{\widehat X_G}\hat FS_{\widehat X_G},
\end{split}
\end{equation}
where $\hat S_{X_G}$ is the operator as in \eqref{e-122}, $F\equiv0\mod C^\infty(\ol M_G\times\ol M_G)$, $\hat F\equiv0$. 
From \eqref{e-gue230321yydm}, it is straightforward to check that 
\[S_{\widehat X_G}(P^*_{M_G}P_{M_G})^{-1}\sigma^*_2\sigma_2=S_{\widehat X_G}(I+\hat R)S_{\widehat X_G},\]
where $\hat R$ is a complex Fourier integral operator of the same type, the same order, the same phase as $S_{\widehat X_G}$ but the leading term vanishes at diagonal. From this observation, we can repeat the proof of~\cite[Theorem 4.15]{HMM} with minor change and deduce that 
${\rm Ker\,}(I+\hat R)$ is a finite dimensional subspace of $C^\infty(\widehat X_G)$. Thus, ${\rm Ker\,}\hat\sigma_{2,s}$ is a finite dimensional subspace of $H^0_b(\widehat X_G)\cap C^\infty(\widehat X_G)$.

Now 
\begin{equation*}
\sigma_2(P^\ast_{M_G}P_{M_G})^{-1}\sigma_2^\ast=B_{M_G}P_{M_G}S_{\widehat X_G}(P^\ast_{M_G}P_{M_G})^{-1}S_{\widehat X_G}P^\ast_{M_G}B_{M_G}.
\end{equation*}
From \cite[Theorem 1.2]{Hsiao08}, we have
\begin{equation}\label{e-128}
\sigma_2(P^\ast_{M_G}P_{M_G})^{-1}\sigma_2^\ast=B_{M_G}(I+R_{M_G})B_{M_G},
\end{equation}
where $R_{M_G}$ is a complex Fourier integral operator of the same type, the same order, the same phase as $B_{M_G}$,  but the leading term vanishes at ${\rm diag\,}(\widehat X_G\times \widehat X_G)$.
We can deduce from \cite[Theorem 4.15]{HMM} that ${\rm Ker\,}(I+R_{M_G})$ is a finite dimensional subspace of $C^\infty(\widehat X_G)$. Hence, ${\rm Coker\,}\hat\sigma_{2,s}$ is a finite dimensional subspace of $H^0(\ol M_G)\cap C^\infty(\ol M_G)$.  The proof follows. 
\end{proof} 

\begin{proof}[End of the proof of Theorem \ref{t-que2204301824}]
It is clear that for every $s\in\mathbb R$, 
\[\sigma_G=\sigma_{G,s}=\sigma_2\circ \sigma_1\circ\hat\sigma: H^0(\overline M)_s^G\to H^0(\overline M_G)_{s-\frac{d}{4}},\]
where $\sigma_G$ is given by \eqref{e-gue2204301555}, \eqref{e-gue2204301610}. 
From Theorem~\ref{t-701}, Theorem~\ref{t-gue230321yyd}, Theorem~\ref{t-gue230321yydI} and 
Theorem \ref{t-gue230320ycd}, we deduce that $\sigma$ is Fredholm, ${\rm Ker\,}\sigma$ and ${\rm Coker\,}\sigma$ are finite dimensional subspaces of $H^0(\ol M)^G\cap C^\infty(\overline M)^G$ and $H^0(\ol M_G)\cap C^\infty(\overline M_G)$ respectively. Moreover, ${\rm Ker\,}\sigma$ and ${\rm Coker\,}\sigma$ are independent of the choices of $s$. The proof is completed.
\end{proof}

\begin{proof}[Proof of Theorem~\ref{t-gue230329yyd}]
In the end of this section, we will prove Theorem~\ref{t-gue230329yyd}. We will not assume \eqref{e-gue230411yydc}. 
Let $M_1$, $M_2$ be bounded domains in $\mathbb C^n$ with smooth boundary. Assume that $M_j$ admits a compact holomorphic Lie group $G$ action and $M_j$ satisfies Assumption~\ref{a-gue2204300124}, for each $j=1, 2$. 
 Let 
 \[\begin{split}
     &F: M_1\To M_2,\\
     &z\To (F_1(z),\ldots,F_n(z)),
 \end{split}\]
 be the $G$-invariant map which satisfies the assumption of Theorem~\ref{t-gue230329yyd}.
 We are going to prove that 
 $F$ extends smoothly to the boundary. Let $\mu_j: M_j\To\mathfrak{g}^*$ be the corresponding moment map on $M_j$ and let $X_j$ be the boundary of $M_j$, $j=1, 2$. 

 \begin{lemma}\label{l-gue230331yyd}
 Let $\tau\in C^\infty(\ol M_1)$, $\tau\equiv1$ near $\mu^{-1}_1(0)\cap X_1$. Then, $(1-\tau)F$ extends smoothly to the boundary. 
 \end{lemma}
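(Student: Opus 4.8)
The plan is to recognize that Lemma \ref{l-gue230331yyd} is the ``easy half'' of the boundary regularity of $F$, controlled entirely by the fact that the $G$-invariant Bergman projection is smoothing away from $\mu_1^{-1}(0)\cap X_1$ (Theorem \ref{t-027}), without any appeal to the Bell--Ligocka transformation machinery or to the hypotheses on the induced quotient map.

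First I would record that every component $F_j$ of $F$ lies in $H^0(\overline{M_1})^G$. Since $M_2$ is a bounded domain in $\mathbb{C}^n$, the coordinate functions $w_j$ are bounded on $M_2$, so each $F_j=w_j\circ F$ is a bounded holomorphic function on $M_1$; as $M_1$ has finite volume, $F_j\in L^2(M_1)\cap\operatorname{Ker}\ddbar=H^0(\overline{M_1})$. The $G$-invariance $F(g\circ z)=F(z)$ gives $g^*F_j=F_j$ for every $g\in G$, hence $F_j\in H^0(\overline{M_1})^G$. Consequently $B_GF_j=F_j$, where $B_G:L^2(M_1)\to H^0(\overline{M_1})^G$ denotes the $G$-invariant Bergman projection on $M_1$.

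Next I would apply Theorem \ref{t-027} on $M_1$ with the cutoff $1-\tau$ playing the role of $\tau$. Since $\tau\equiv 1$ near $\mu_1^{-1}(0)\cap X_1$, we have $\operatorname{supp}(1-\tau)\cap\mu_1^{-1}(0)\cap X_1=\emptyset$, so Theorem \ref{t-027} yields
\[
(1-\tau)B_G\equiv 0\mod C^\infty(\overline{M_1}\times\overline{M_1}),
\]
i.e. $(1-\tau)B_G$ is a smoothing operator whose distribution kernel is smooth up to the boundary. Combining this with $B_GF_j=F_j$ gives
\[
(1-\tau)F_j=(1-\tau)B_GF_j .
\]
Because the kernel of $(1-\tau)B_G$ lies in $C^\infty(\overline{M_1}\times\overline{M_1})$ and $F_j\in L^2(M_1)$, the right-hand side is obtained by integrating $F_j$ against a kernel that is smooth up to the boundary in the first variable, whence $(1-\tau)F_j\in C^\infty(\overline{M_1})$. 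Running this over $j=1,\dots,n$ shows that $(1-\tau)F=\bigl((1-\tau)F_1,\dots,(1-\tau)F_n\bigr)$ extends smoothly to the boundary, which is the assertion.

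I do not expect a genuine obstacle inside this lemma: the only points requiring care are the verification $F_j\in H^0(\overline{M_1})^G$ (so that $B_G$ fixes it) and the routine fact that an operator with kernel in $C^\infty(\overline{M_1}\times\overline{M_1})$ maps $L^2(M_1)$ into $C^\infty(\overline{M_1})$. The real difficulty of Theorem \ref{t-gue230329yyd} is deferred to the complementary piece $\tau F$, where $\operatorname{supp}\tau$ meets $\mu_1^{-1}(0)\cap X_1$; there $B_G$ is no longer smoothing but a Fourier integral operator (Theorem \ref{t-510}), and one must carry out the Bell--Ligocka transformation argument of \cite{BL80}, transporting smoothness through $F$ via the assumption that the induced map $M_1/G\to M_2/G$ is biholomorphic on the regular part. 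That step, and not the present lemma, is where the work will lie.
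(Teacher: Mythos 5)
Your proposal is correct and follows essentially the same route as the paper: each component $F_j$ is bounded, hence a $G$-invariant $L^2$ holomorphic function fixed by $B_{G,M_1}$, and the cutoff times $B_{G,M_1}$ is smoothing up to the boundary by Theorem~\ref{t-510} (equivalently Theorem~\ref{t-027}), so the cutoff times $F_j$ is in $C^\infty(\ol M_1)$. If anything, you are slightly more careful than the paper, which in its proof writes $\tau B_{G,M_1}\equiv 0$ where the cutoff applied should be $1-\tau$ (the one whose support misses $\mu_1^{-1}(0)\cap X_1$); your version applies the smoothing statement to the correct factor.
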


\begin{proof}
Fix $j\in\set{1,\ldots,n}$. Since $F_{j}(z)$ is bounded, $F_{j}(z)$ is a $G$-invariant $L^2$ holomorphic function on $M_1$. We have 
\begin{equation}\label{e-gue230331yyd}
(\tau F_{j})(z)=\tau(z)(B_{G,M_1}F_{j})(z)=((\tau B_{G,M_1})F_{j})(z),
\end{equation}
where $B_{G,M_1}$ is the $G$-invariant Bergman projection on $M_1$. 
In view of Theorem~\ref{t-510}, we see that $\tau B_{G,M_1}\equiv0\mod C^\infty(\ol M_1\times\ol M_1)$. From this observation and 
\eqref{e-gue230331yyd}, we deduce that $\tau F_{j}\in C^\infty(\ol M_1)$. The lemma follows. 
\end{proof}

From Theorem~\ref{t-510}, we see that 
\begin{equation}\label{e-gue230331yydI}
B_{G,M_1}(\cdot,w)\in C^\infty(\ol M_1),\ \ \mbox{for every $w\in M_1$}. 
\end{equation}
Fix $p\in\mu^{-1}_1(0)\cap X_1$. Let $Z_1,\ldots,Z_{n-d}\in C^\infty(U,T^{1,0}\mathbb C^n)$ such that 
\begin{equation}\label{e-gue230331yydII}
{\rm span\,}\set{\eta-iJ\eta,Z_1,\ldots,Z_{n-d};\, \eta\in\underline{\mathfrak{g}}_x}=T^{1,0}_x\mathbb C^n,\ \ \mbox{for every $x\in U$},
\end{equation}
where $U$ is a small open set of $p$ in $\mathbb C^n$ and $J$ is the complex structure map on $\mathbb C^n$. It follows from \eqref{e-gue230319ycdem} and ~\cite[Theorem 1.10]{HM14} that there are $f_0,\ldots,f_{n-d}\in H^0(\ol M_1)^G\cap C^\infty(\ol M_1)$ such that 
\begin{equation}\label{e-gue230401yyd}
\begin{split}
&{\rm det\,}\left((a_{j,\ell})^{n-d}_{j,\ell=0}\right)\neq0,\\
&a_{0,\ell}=f_{\ell}(p),\ \ \ell=0,\ldots,n-d,\\
&a_{j,\ell}=(Z_{j}f_{\ell})(p),\ \ \ell=0,\ldots,n-d, j=1,\ldots,n-d.
\end{split}
\end{equation}
From \eqref{e-gue230331yydI} and \eqref{e-gue230401yyd}, combined with the proof of~\cite[Part 3 of the proof of Lemma 1]{BL80}, we deduce that there are $n-d+1$ points $a_0,a_1,\ldots,a_{n-d}$ in $M_1$ such that 
\begin{equation}\label{e-gue230401yydI}
\begin{split}
&{\rm det\,}\left((b_{j,\ell})^{n-d}_{j,\ell=0}\right)\neq0,\\
&b_{0,\ell}=B_{G,M_1}(p,a_{\ell}),\ \ \ell=0,\ldots,n-d,\\
&b_{j,\ell}=(Z_{j,x}
B_{G,M})(p,a_\ell),\ \ \ell=0,\ldots,n-d, j=1,\ldots,n-d.
\end{split}
\end{equation} 
From Lemma~\ref{l-gue230331yyd}, \eqref{e-gue230401yydI} and ~\cite[Lemma 2]{BL80}, we get Theorem~\ref{t-gue230329yyd}. 
\end{proof}

\bibliographystyle{plain}

\end{document}